\renewcommand{\baselinestretch}{1}
\definecolor{myblue}{rgb}{0.13,0.29,0.56}
\crefname{section}{\color{black} Section}{\color{black} Section}
\crefname{lemma}{Lemma}{Lemmas}
\crefname{prob}{Problem}{Problems}
\crefname{coro}{Corollary}{Corollaries}
\crefname{conj}{Conjecture}{Conjectures}
\crefname{prop}{Proposition}{Propositions}
\crefname{claim}{Claim}{Claims}
\crefname{defn}{Definition}{Definitions}
\crefname{rmk}{Remark}{Remarks}
\crefname{exm}{Example}{Examples}
\crefname{thm}{Theorem}{Theorems}
\def\br{\mathbb{R}}
\def\bn{\mathbb{N}}
\theoremstyle{plain}
\newtheorem{thm}{Theorem}[section]
\newtheorem{lemma}[thm]{Lemma}
\newtheorem{coro}[thm]{Corollary}
\theoremstyle{definition}
\newtheorem{rmk}[thm]{Remark}
\newcommand{\bremark}{\begin{remark} \em}
\newcommand{\eremark}{\end{remark} }
\numberwithin{equation}{section}
\begin{document}
\parindent 15pt
\renewcommand{\theequation}{\thesection.\arabic{equation}}
\renewcommand{\baselinestretch}{1.15}
\renewcommand{\arraystretch}{1.1}
\renewcommand{\vec}[1]{\bm{#1}}
\def\disp{\displaystyle}
\title{\bf\large Symmetry of Convex Solutions to Fully Nonlinear Elliptic Systems: Unbounded Domains\thanks{Supported by National Natural Science Foundation of China (11771428, ????????)}
\author{{\small Weijun Zhang$^{1}$\thanks{Email: zhangweij3@mail.sysu.edu.cn}~~~Zhitao Zhang$^{2,3,4}$\thanks{Corresponding author. Email: zzt@math.ac.cn}}\\
{\small $^1$ School of Mathematics, Sun Yat-sen University, Guangzhou 510275, People's Republic of China,}\\
{\small $^2$ Academy of Mathematics and Systems Science,}\\
{\small Chinese Academy of Sciences, Beijing 100190, People's Republic of China,}\\
{\small $^3$ School of Mathematical Sciences, University of Chinese}\\
{\small Academy of Sciences, Beijing 100049, People's Republic of China,}\\
{\small $^4$ School of Mathematical Science, Jiangsu University, Zhenjiang 212013, People's Republic of China.}}
}

\date{}

\maketitle

\abstract
{\small In this paper, we are concerned with the monotonic and symmetric properties of convex solutions Monge-Amp\`ere systems for instance, considering
	\begin{equation*}
		\det(D^2u^i)=f^i(x,{\bf u},\nabla u^i), \ 1\leq i\leq m,
	\end{equation*}
over unbounded domains of various cases, including the whole spaces $\br^n$, the half spaces $\br^n_+$ and the unbounded tube shape domains in $\br^n$. We obtain monotonic and symmetric properties of the solutions to the problem with respect to the geometry of domains and the monotonic and symmetric properties of right-hand side terms. The proof is based on carefully using the moving plane method together with various maximum principles and Hopf's lemmas.
}
\vskip 0.2in
{\bf Key words:} Monge-Amp\`ere type systems; Moving plane method; Symmetry.\\
\vskip 0.02in
{\bf AMS Subject Classification(2010):} 35J47, 35J60, 35B06.

\section{Introduction}
\noindent

In this paper, we consider the following Monge-Amp\`ere systems:
\begin{equation}\label{eq:MA}
\det(D^2 u^i(x))=f^i(x,{\bf u}(x),\nabla u^i(x)),\ in\ \Omega,\ 1\leq i\leq m,
\end{equation}
where $\Omega\subset\br^n$, ${\bf u}=(u^1,\dots,u^m)$, and ${\bf f}=(f^1,\dots,f^m)$, $n,m\in\bn^*$, satisfy some suitable conditions in different cases.

\subsection{Background}

The monotonicity and symmetry of solutions to nonlinear partial differential equations has many applications in mathematics, such as ensure the uniqueness of solution of some nonlinear elliptic equations as we see in \cite{zhang2024}; derive the a-priori bound or the behaviors at the infinity for solutions to nonlinear elliptic equations; discover the bifurcation phenomenon\cite{zhang_existence_2009,zhang_power-type_2015}, especially the situations when symmetry breaking; determine the geometry of a manifolds \cite{alexandrov_characteristic_1962} and so on. See \cite{chen_methods_2010, chen_moving_2003, zhang_variational_2013} and the reference therein for more examples. 

These properties have been studied in many years by many mathematician, see \cite{gidas_symmetry_1979,gidas_symmetry_1981,li_monotonicity_1991,li_monotonicity_1991-1} and the reference therein. However, to the best of our knowledge, there are few papers concerning as for Monge-Amp\`ere equations, especially the case of unbounded domains, except \cite{ma_symmetry_2010-1,cui_symmetry_2019}.

Our principal goal of this paper is to give a rather complete and general version of monotonic and symmetric results to the Monge-Amp\`ere system over unbounded domains of various cases, including the whole spaces $\br^n$, the half spaces $\br^n_+$ and the unbounded tube shape domains in $\br^n$. The cases of bounded domains can be seen here \cite{zhang2024}.

\subsection{Main Results}
The main results about symmetry are formally stated as below, in fact we get a more general results about monotonicity, more detail could be seen in \cref{sec:thewhole,sec:thehalf,sec:ubt}.

In order to state our main results, we need firstly introduce some basic hypotheses on $f^i:\overline{\Omega}\times\br^m\times\br^n\to\br, (x,{\bf z},p)\to f^i(x,{\bf z},p)$, where ${\bf z}=(z^1,\dots,z^m)$. We suppose that for all $1\leq i\leq m$, $f^i\in C(\overline{\Omega}\times\br^m\times\br^n,\br)$, furthermore satisfying some of the following in different situations.

In order to assure the ellipticity of \eqref{eq:MA}, we need the following two kinds of positive conditions:
\begin{enumerate}[resume,label=$(F_{\arabic{enumi}})$]
	\item \label{F0} $f^i(x,{\bf z},p)>0$, $\forall (x,{\bf z},p)\in (\Omega\times\br^m\times\br^n)$;
	\item \label{Fc} $f^i(x,{\bf z},p)\geq c_f>0$, $\forall (x,{\bf z},p)\in (\Omega\times\br^m\times\br^n)$;
\end{enumerate}

\begin{rmk}
	\ref{Fc}, which is in order to assure the uniformly ellipticity of \eqref{eq:MA}, is stronger than \ref{F0}, which is merely assure the ellipticity of \eqref{eq:MA}.
\end{rmk}

Next, when $\Omega$ assume to be convex in one direction, denote as ${\bf e_1}$, we can study whether the solutions to \eqref{eq:MA} will be having some monotonicity, hence we need the following monotonicity kind conditions on ${\bf f}$:

\begin{enumerate}[resume,label=$(F_{\arabic{enumi}})$]

\item \label{Fziinf} $f^i$ is non-decreasing in $z^i$, whenever the remaining components $z^j$, $j\neq i$, and $x,p$ fixed;
	\item \label{Fzj}$f^i$ is non-increasing in $z^j$, $j\neq i$, whenever the remaining components $z^k$, $k\neq j$, and $x,p$ fixed;
	\item \label{Fp}$f^i$ is locally uniformly Lipschitz continuous in the component $p$, whenever $x,{\bf z}$ fixed;
        \item \label{Fanti}$f^i(y_1,x',{\bf z},\bar{p})\geq f^i(x,{\bf z},p),\ \forall\ {\bf z}\in \br^m, p\in\br^n$ and $x=(x_1,x')\in\Omega$ such that $p_1\leq 0, x_1\leq 0$ with $x_1\leq y_1 \leq -x_1$, where $\bar{p}:=(-p_1,p_2,\cdots,p_n)$;
\end{enumerate}

In order to determine the consistency of the symmetric center of each $u^i$, we sometimes use the following conditions instead of \ref{Fzj}.
\begin{enumerate}[resume,label=$(F_{\arabic{enumi}})$]
\item \label{Ffc} $\forall I,J\subset\{1,2,\dots,m\}, I,J\neq\emptyset, I\cap J=\emptyset, I\cup J=\{1,2,\dots,m\},\ \exists i_0\in I, j_0\in J,$ denote the set of $x\in\Omega$ as $A_f^{i_0,j_0}$ such that $f^{i_0}$ is strictly decreasing with respect to $z^{j_0}$ whenever the remaining components $z^k, k\neq i_0,j_0,$ and $p$ fixed, we have $|A_f^{i_0,j_0}|>0$.
\end{enumerate}

\begin{rmk}
    Condition \ref{Fzj} is called the weak coupled condition or cooperative condition, and condition \ref{Ffc} is called the strong coupled condition, both are insuring the solutions $u^i$ to have the same monotonicity or symmetry, while the latter one can also determine the consistency of the symmetric center.
\end{rmk}

Furthermore, when $\Omega$ assume to be symmetric in ${\bf e_1}$, we can study whether the solutions to \eqref{eq:MA} will be having some symmetry along ${\bf e_1}$, we need to strengthen \ref{Fanti}.

\begin{enumerate}[resume,label=$(F_{\arabic{enumi}})$]
	\item \label{Fsym1}$f^i(x,{\bf z},p)= f^i(|x_1|,x_2,\dots,x_n,{\bf z},|p_1|,p_2,\dots,p_n),\ \forall (x,{\bf z},p)\in (\Omega\times\br^m\times\br^n)$;
\end{enumerate}

At last, when $\Omega$ assume to be symmetric in all directions, we can study whether the solutions to \cref{eq:MA} will be radially symmetry, we need to strengthen \ref{Fsym1}.

\begin{enumerate}[resume,label=$(F_{\arabic{enumi}})$]
	\item \label{Fsymall} $f^i(x,{\bf z},p)=f^i(Ox,{\bf z},O'p),\ \forall\ O, O’\in O(n),\ \forall (x,{\bf z},p)\in(\Omega\times\br^m\times\br^n)$, where $O(n)$ is the n-th order orthogonal group;
	
\end{enumerate}

For the convenience, we denote
\begin{equation}\label{eq:dijinf}
\widetilde{d_{ij}}(x,{\bf z},p,h):=\left\{\begin{array}{rl}
\frac{1}{h}\left(f^i(x,{\bf z}+h{\bf e_j},p)-f^i(x,{\bf z},p)\right),& h\neq 0,\\
0,& h=0,\\
\end{array}
\right.
\end{equation}
and $\widetilde{D}:=\begin{pmatrix}
\widetilde{d_{11}}&\cdots&\widetilde{d_{1m}}\\
\vdots&\ddots&\vdots\\
\widetilde{d_{m1}}&\cdots&\widetilde{d_{mm}}
\end{pmatrix},$

\begin{rmk}\label{rmk:dijinf}
 By \ref{Fziinf} and \ref{Fzj}, we have $\operatorname{sgn}(\widetilde{d_{ij}}\cdot h)=-\operatorname{sgn}(h)$, and  $\widetilde{d_{ij}}\leq0, \ \forall\ i,j=1,\dots,m,\ i\neq j$, while $\widetilde{d_{ii}}\geq0,\ \forall\ i=1,\dots,m$.
\end{rmk}

Denote $\widetilde{D_i}$ be the $i$-th ordered minor of $\widetilde{D}$, we need to require some positivity on it at the infinity:
\begin{enumerate}[resume, label=$(F_{\arabic{enumi}})$]

	\item \label{FH_i} 
    $\widetilde{D_h^i}:=\lim\limits_{|x|^{-1}+\sum\limits_{j=1}^m|z^j|^{-1}\to 0}\inf\limits_{h\in\br}\widetilde{D_i}(x,z^1,\cdots,z^m,0,h)>0$.
\end{enumerate}

\begin{rmk}
 Condition \ref{FH_i} can be used to describe the behaviors of $u^i$ at the infinity. In particular, if $\widetilde{D}$ is symmetric, then by Sylvester's criteria, condition \ref{FH_i} is equivalent to: $$\lim\limits_{|x|^{-1}+\sum\limits_{j=1}^m|z^j|^{-1}\to 0}\inf\limits_{h\in\br}\widetilde{D}(x,z^1,\cdots,z^m,0,h)\ \text{is positive definite}.$$
\end{rmk}

\vskip 0.1in
Now we begin to state our main results about symmetry.

\vskip 0.1in
For the case of $\Omega=\br^n$, we mainly consider the entire problem,
\begin{equation}\label{eq:wholeintro}
\left\{\begin{array}{rl}
\det(D^2 u^i)\ =&f^i(x,{\bf u},\nabla u^i),\quad\ \text{in}\ \br^n, \\
\lim\limits_{|x|\to +\infty}u^i(x)\ =&\infty,\quad\ \quad\ \quad\ \quad\ \ 1\leq i\leq m, \\

\end{array}
\right.
\end{equation}

We have the main results as follow,
\begin{thm}
	Assume ${\bf f}$ satisfy \ref{F0}, \ref{Fziinf}, \ref{Fp}, \ref{Ffc}, \ref{Fsymall} and \ref{FH_i}. Let ${\bf u}=(u^1,\cdots,u^m)$ be a group of $[C^2(\br^n)]^m$ strictly convex solutions of \eqref{eq:wholeintro} satisfying some growth conditions as in \cref{sec:thewhole}, then each $u^i$ must be radially symmetric and strictly increasing respect to the some point in $\br^n$. 
	
	More precisely, denote the rotating center as $x^*\in\br^n$, and $r=|x-x^*|$, then for $i=1\dots,m$, each $u^i$ must be like
	\begin{equation*}
	u^i(x)=u^i(r),\ \forall\ x\in\br^n,
	\end{equation*}
	moreover,
	\begin{equation*}
	\frac{d u^i}{dr}(x)>0,\ \forall\ x\in\br^n,
	\end{equation*}
\end{thm}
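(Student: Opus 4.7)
The plan is to run the moving-plane method direction by direction on the system, show that in every direction each $u^i$ is symmetric about a common hyperplane, and then intersect those hyperplanes to extract a common radial center $x^*$. Fix the direction $e_1$ and set $\Sigma_\lambda=\{x\in\br^n:x_1<\lambda\}$, $x^\lambda=(2\lambda-x_1,x_2,\ldots,x_n)$, $u^i_\lambda(x)=u^i(x^\lambda)$, and $w^i_\lambda=u^i_\lambda-u^i$.

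Linearize the Monge--Amp\`ere operator via the cofactor identity: strict convexity forces $D^2u^i$ and $D^2u^i_\lambda$ to be positive definite, so
\[
\det(D^2u^i_\lambda)-\det(D^2u^i)=a^{i,jk}(x;\lambda)\,\partial_{jk}w^i_\lambda,\qquad a^{i,jk}=\int_0^1\operatorname{cof}_{jk}\!\bigl(tD^2u^i_\lambda+(1-t)D^2u^i\bigr)\,dt,
\]
with $(a^{i,jk})$ uniformly positive definite on compact sets. The right-hand side equals $f^i(x^\lambda,{\bf u}_\lambda,\overline{\nabla u^i_\lambda})-f^i(x,{\bf u},\nabla u^i)$ where $\overline{p}=(-p_1,p_2,\ldots,p_n)$. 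Inserting the intermediate values $f^i(x,{\bf u}_\lambda,\overline{\nabla u^i_\lambda})$ and $f^i(x,{\bf u},\overline{\nabla u^i_\lambda})$ splits this into three increments: the spatial increment is controlled by the full orthogonal symmetry \ref{Fsymall} (which becomes an exact match at the eventual center of symmetry), the gradient increment becomes a bounded drift by the local Lipschitz condition \ref{Fp}, and the ${\bf u}$-increment becomes a coupling $\sum_j c^i_j w^j_\lambda$ via the operators $\widetilde{d_{ij}}$ of \eqref{eq:dijinf}. The resulting cooperative linear elliptic system
\[
a^{i,jk}\,\partial_{jk}w^i_\lambda+b^{i,k}\,\partial_kw^i_\lambda+\sum_{j=1}^m c^i_j\,w^j_\lambda\leq 0,\qquad 1\leq i\leq m,
\]
has $c^i_j\leq 0$ for $j\neq i$ by \Cref{rmk:dijinf}, so the standard maximum-principle machinery for cooperative systems becomes available.

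The delicate step, and the main obstacle, is starting the plane from $\lambda=-\infty$. On the unbounded half-space $\Sigma_\lambda$ with $u^i\to+\infty$, neither the small-domain nor the decay-at-infinity versions of the maximum principle are at hand. Hypothesis \ref{FH_i} plays the compensating role: it forces the leading principal minors of the coupling matrix $(\widetilde{d_{ij}})$ to stay bounded below by a positive constant at infinity. Combined with the growth asymptotics on $u^i$ specified in \Cref{sec:thewhole}, this positivity drives a Serrin/Phragm\'en--Lindel\"of-type maximum principle for cooperative systems and yields $w^i_\lambda\geq 0$ on $\Sigma_\lambda$ for all sufficiently negative $\lambda$. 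Constructing the barrier or weight that lets the positive zero-order coupling overwhelm the unboundedness is where I expect the real technical work of the proof to live.

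The remainder follows the standard template. Set $\lambda^*_i=\sup\{\lambda:w^i_\mu\geq 0\text{ in }\Sigma_\mu\text{ for all }\mu\leq\lambda\}$; finiteness follows from $u^i(x)\to+\infty$ in both half-spaces. The strong maximum principle and Hopf's lemma for cooperative systems give $w^i_{\lambda^*_i}\equiv 0$. The strong-coupling condition \ref{Ffc} then collapses the $\lambda^*_i$ to a single value: letting $I$ be the set of minimizers of $\{\lambda^*_k\}$ and $J=I^c$, the pair $(i_0,j_0)\in I\times J$ supplied by \ref{Ffc} has $c^{i_0}_{j_0}<0$ on a positive-measure set, and plugging $w^{i_0}_{\lambda^*_{i_0}}\equiv 0$ into the $i_0$-th equation forces $\sum_j c^{i_0}_j w^j_{\lambda^*_{i_0}}=0$, hence $w^{j_0}_{\lambda^*_{i_0}}\equiv 0$ on the relevant connected component, contradicting $\lambda^*_{j_0}>\lambda^*_{i_0}$. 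Running the argument in every direction $\nu\in S^{n-1}$ (legitimate thanks to the full orthogonal symmetry \ref{Fsymall}) produces symmetry hyperplanes that must meet at a single point $x^*\in\br^n$; radial symmetry about $x^*$ and the strict monotonicity $du^i/dr>0$ for $r>0$ (Hopf's lemma applied along rays emanating from $x^*$) then conclude the argument.
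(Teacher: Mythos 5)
Your overall architecture is correct and matches the paper at a high level: cofactor linearization of the Monge--Amp\`ere operator into a cooperative elliptic system, hypothesis \ref{FH_i} compensating for unboundedness, \ref{Ffc} aligning the critical planes, and sweeping over all directions to extract a radial center. But there is one genuine conceptual gap and two smaller problems.

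The main gap is exactly the step you flag as ``where the real technical work lives,'' namely starting the plane from $\lambda=-\infty$. You expect a barrier or weighted Phragm\'en--Lindel\"of argument. That is not what the paper does, and it is not clear such a construction is available here. The paper's argument (Step~1 of the proof of \cref{thm:whole}) is purely algebraic. \cref{lemma:inf1} (which uses \eqref{eq:inf1}--\eqref{eq:inf2}) guarantees that $\limsup_{|x|\to\infty}U^i_\lambda(x)\leq 0$ on $\Sigma_\lambda$, so any positive supremum of $U^i_\lambda$ is attained at an interior point $y_i$. Evaluating \eqref{eq:EIW} at each $y_i$ (where $\nabla U^i_\lambda=0$, $D^2U^i_\lambda\leq 0$) and using that $\widetilde{d_{ij}}\leq 0$ for $i\neq j$ while $U^j_\lambda(y_i)\leq U^j_\lambda(y_j)$ produces the finite linear system $MU=V$ with $V\leq 0$ and $M$ an off-diagonally nonpositive matrix whose leading principal minors are strictly positive for $\lambda<-\widetilde R$ by \cref{lemma:inf2} and \ref{FH_i}. \cref{lemmaA} then gives $\operatorname{adj}(M)\geq 0$ entrywise, so $U=M^{-1}V\leq 0$ by Cramer's rule --- contradicting the assumed positive maximum. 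No barrier is built anywhere. So the technical heart of the proof is a finite-dimensional M-matrix argument, not an analytic one; your sketch leaves precisely this open.

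Two smaller issues. First, the sign: with $w^i_\lambda=u^i_\lambda-u^i$ and $x\in\Sigma_\lambda=\{x_1<\lambda\}$, strict convexity and $u^i\to\infty$ give $u^i(x^\lambda)<u^i(x)$ for $\lambda$ very negative, i.e.\ $w^i_\lambda\leq 0$, not $\geq 0$; the inequality signs in your cooperative system then flip correspondingly and you must carry the corrected orientation through the Hopf-lemma and alignment steps. Second, passing from the supremum $\lambda^*_i$ to $w^i_{\lambda^*_i}\equiv 0$ is not a direct consequence of SMP and Hopf: one must rule out the possibility that $w^i_{\lambda^*_i}<0$ strictly throughout $\Sigma_{\lambda^*_i}$ yet the plane cannot be pushed further. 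The paper's Step~2 handles this with a careful sequence argument (interior maxima of $U^i_{\lambda_k}$ accumulating either on $T_\Lambda$, contradicting the Hopf derivative bound, or escaping to infinity, where the Step~1 matrix argument reapplies). Your collapse argument via \ref{Ffc} also needs the reflected-domain case $x_0\in\Sigma_\Lambda^\Lambda$ (the paper's Step~3 treats both $x_0\in\Sigma_\Lambda$ and $x_0\in\Sigma_\Lambda^\Lambda$), and your use of separate thresholds $\lambda^*_i$ per component --- rather than a single $\Lambda$ common to the whole system as in the paper --- requires a justification that the cooperative structure is preserved at $\min_k\lambda^*_k$, which you should make explicit.
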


\vskip 0.1in
For the case of $\Omega=\br^n_+$, we consider the Neumann problem in the half space. Denote $x=(x',x_n)$, $x'=(x_1,\cdots,x_{n-1})\in \br^{n-1}$, $x_n\geq 0$, and $r=|x'|$, for all $1\leq i\leq m$, 

\begin{equation}\label{eq:halfNintro}
\left\{\begin{array}{rl}
&\det(D^2 u^i)\ =f^i(x,{\bf u},\nabla u^i),\ x\in\ \br^n_+, \\
&u^i(x)>0,\hspace{2.9cm}\ x\in\ \br^n_+,\\
&\frac{\partial u^i}{\partial x_n}(x)=h^i(r),\hspace{2.05cm}\ x\in\ \partial\br^n_+,\\
&\lim\limits_{|x|\to\infty}u^i(x)\ =\infty,
\end{array}
\right.
\end{equation}
We have the main results as follow,
\begin{thm}
	Assume ${\bf f}$ satisfy \ref{Fc}, \ref{Fp}, \ref{Ffc}, \ref{Fsymall}, \ref{FH_i} and \ref{Fziinfh},  ${\bf h}$ satisfy \ref{H}. Let ${\bf u}=(u^1,\cdots,u^m)$ be a group of $[C^2(\overline{\br^n_+},\br_+)]^m$ strictly convex solutions to \eqref{eq:halfNintro} satisfying some growth conditions as in \cref{sec:thewhole}, then each $u^i$ must be radially symmetric and strictly increasing respect to some point in $\br^n_+$. 
	
	More precisely, denote the rotating center as $x^*=((x^*)',x^*_n)\in\br^n_+$, and $r=|x'-(x^*)'|$, then for $i=1\dots,m$, each $u^i$ must be like
	\begin{equation*}
		u^i(x)=u^i(r,x_n),\ \forall\ x\in \br^n_+,
	\end{equation*}
	moreover, 
	\begin{equation*}
		\frac{\partial u^i}{\partial r}(r,x_n)>0,\ \forall\ x\in \br^n_+.
	\end{equation*}
\end{thm}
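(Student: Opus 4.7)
The plan is to adapt the moving plane method from the whole-space theorem to the half-space, but to move planes only in the tangential directions $e_1,\dots,e_{n-1}$, never in $e_n$. The rationale is that the Neumann data $h^i(r)$ depends only on $r=|x'|$, so reflecting across a vertical hyperplane $T_\lambda=\{x_1=\lambda\}$ leaves the boundary condition invariant; hence the reflected function automatically satisfies the same system plus boundary condition, and the standard moving-plane machinery applies.

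First, for a fixed tangential direction $e_1$, I would set $x^\lambda=(2\lambda-x_1,x_2,\dots,x_n)$, $\Sigma_\lambda=\{x\in\br^n_+:x_1<\lambda\}$, and define $v^i_\lambda(x)=u^i(x^\lambda)$, $w^i_\lambda=v^i_\lambda-u^i$. Using the identity $\det(D^2v^i_\lambda)-\det(D^2u^i)=\mathrm{tr}(M^i_\lambda D^2w^i_\lambda)$ with $M^i_\lambda=\int_0^1\mathrm{cof}(tD^2v^i_\lambda+(1-t)D^2u^i)\,dt$ positive definite by strict convexity (and uniformly so thanks to \ref{Fc}, as in the whole-space analysis of \cref{sec:thewhole}), and the quotients $\widetilde{d_{ij}}$ from \reff{eq:dijinf} for the right-hand side, each $w^i_\lambda$ satisfies a linear cooperative elliptic system in $\Sigma_\lambda$, with homogeneous Neumann condition $\partial w^i_\lambda/\partial x_n=0$ on $\partial\br^n_+\cap\Sigma_\lambda$ (coming from radial symmetry of $h^i$).

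Next, I would run the two standard steps. Step one: for $\lambda$ sufficiently negative, $w^i_\lambda\geq 0$ in $\Sigma_\lambda$ for every $i$. This is where \ref{FH_i} and the growth condition on ${\bf u}$ are essential, exactly as in the entire case: they force a quadratic-type lower bound on ${\bf u}$ outside a large ball, so that $w^i_\lambda>0$ in $\Sigma_\lambda$ outside a compact set; the Neumann condition prevents any pathology at the lateral boundary. Step two: let $\lambda^*=\sup\{\lambda:w^i_\mu\geq 0\text{ in }\Sigma_\mu\text{ for all }\mu\leq\lambda\text{ and all }i\}$. A sliding-type contradiction using maximum principles, the Hopf lemma (both at interior touching points and on the Neumann boundary), the sign pattern of $\widetilde{D}$ from \cref{rmk:dijinf}, and the strong coupling \ref{Ffc} forces either full antisymmetry $w^i_{\lambda^*}\equiv0$ for all $i$ simultaneously, or $\lambda^*=+\infty$; the latter contradicts $u^i\to\infty$. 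Thus every $u^i$ is symmetric in $e_1$ about the same hyperplane $\{x_1=\lambda^*\}$. Rotating $e_1$ through all tangential directions and invoking \ref{Fsymall} yields symmetry of each $u^i$ about a common point $(x^*)'\in\br^{n-1}$ in the $x'$-slices, whence $u^i(x)=u^i(r,x_n)$. Strict positivity $\partial u^i/\partial r>0$ comes from applying Hopf's lemma to $w^i_\lambda$ at non-critical positions $\lambda<\lambda^*$.

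The main obstacle I expect is the simultaneous control of the coupled system at infinity and at the Neumann boundary. In the bounded case \cite{zhang2024} one can localize in a compact set where strict convexity gives uniform ellipticity for free; in the entire case one is spared the boundary. Here both difficulties combine: I must guarantee that the initial step of the moving plane is valid uniformly in all tangential directions, which requires extracting from \ref{FH_i} an asymptotic positive-definite lower bound for the coupling matrix $\widetilde{D}$ that is robust under reflection, and I must propagate the sign of $w^i_\lambda$ across the flat Neumann portion using a Hopf argument adapted to Monge-Amp\`ere cofactor operators with nonsymmetric right-hand side coupling. The strict-monotonicity subset from \ref{Ffc} together with the sign structure of \cref{rmk:dijinf} should drive the propagation between components, replacing the Sylvester-criterion argument available only in the symmetric case.
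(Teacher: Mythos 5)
Your overall strategy (moving planes in tangential directions, linearizing the Monge--Amp\`ere difference via the cofactor-matrix mean value form, using the maximum principle plus Hopf and the coupling structure from \ref{Ffc}, then rotating) matches the paper. However, there are genuine gaps at the boundary, which is exactly where the half-space case deviates from the whole-space case.

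\textbf{The central error.} You claim that because $h^i$ depends only on $r=|x'|$, reflecting across $T_\lambda=\{x_1=\lambda\}$ ``leaves the boundary condition invariant'' and that $w^i_\lambda$ therefore satisfies a homogeneous Neumann condition $\partial w^i_\lambda/\partial x_n=0$ on $P_\lambda=\partial\br^n_+\cap\Sigma_\lambda$. This is false for $\lambda\neq 0$. Reflection across $\{x_1=\lambda\}$ does not preserve $r$: for $x\in\Sigma_\lambda$ with $\lambda<0$, one has $|(x^\lambda)'|\leq|x'|$, so what one actually obtains is the one-sided inequality
\[
\frac{\partial w^i_\lambda}{\partial x_n}(x)=h^i(|(x^\lambda)'|)-h^i(|x'|)\geq 0\quad\text{on }P_\lambda,
\]
and this inequality depends crucially on the monotonicity part of \ref{H} (that $\partial h^i/\partial r\leq 0$), not merely on the radial form of $h^i$. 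The sign structure of this boundary inequality is precisely what the paper exploits (see \eqref{eqP}) in \cref{lemma:half2} to exclude an interior-maximum point of $U^i_\lambda$ from sitting on $P_\lambda$, using a shifted comparison function $\overline{U^i_\lambda}=U^i_\lambda-C^i$ together with the strengthened condition \ref{Fziinfh} to force strict sign of the zero-order coefficient near $x_n=0$. Your proposal misses this entirely; if you take the Neumann condition to be homogeneous, \cref{lemma:half2} is unnecessary and the proof collapses to the whole-space case, but the argument then silently fails whenever the maximizing sequence touches the flat boundary.

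\textbf{The corner point.} Even granting the corrected boundary inequality, a second nontrivial issue arises in Step 2 of the sliding argument. When $\Lambda<0$ and the maximizing sequence $\{y_k\}$ is bounded, its limit $y_0$ may lie in the corner $\overline{P_\Lambda}\cap\overline{T_\Lambda}$, where $\Sigma_\Lambda$ has no interior ball and the standard Hopf lemma gives nothing. The paper has to invoke \cref{lemmaS} (Serrin's corner lemma) in the direction $s=-e_1+e_n$, verify the degeneracy condition $a_{ij}\rho_i\sigma_j=0$ at the corner from the structure of $D^2U^i_\Lambda$ on $T_\Lambda$ in \eqref{HessainonT} and \eqref{eq:aij}, and then rule out the second-order alternative \eqref{eq:H2} using the explicit boundary-derivative computation \eqref{eq:H3} coming from \ref{H}. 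This is a delicate and essential step; your proposal mentions a ``Hopf argument adapted to Monge--Amp\`ere cofactor operators'' but does not identify the corner obstruction or the corner lemma needed to resolve it.

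\textbf{Minor sign slip.} With the paper's normalization $U^i_\lambda=u^i_\lambda-u^i$, the correct statement in Step 1 for sufficiently negative $\lambda$ is $U^i_\lambda\leq 0$ on $\Sigma_\lambda$, not $\geq 0$; with $u^i\to+\infty$ at infinity, the reflected point $x^\lambda$ lies closer to the origin, so $u^i(x^\lambda)\leq u^i(x)$.
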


\vskip 0.1in
For the case of unbounded tubes, 
we mainly consider the following constant-boundary Dirichlet problem for \eqref{eq:MA} on $C_{\infty}$,
\begin{equation}\label{eq:cylinderinfintro}
\left\{\begin{array}{rl}
\det(D^2 u^i)\ =&f^i(x,{\bf u},\nabla u^i), x\in C_{\infty},\\
u^i\ =&h^i,\hspace{1.8cm}x\in \partial C_{\infty},\\
\lim\limits_{|x|\to +\infty}u^i(x)\ =&\infty, \hspace{1.8cm} i=1,\dots,m.\\
\end{array}
\right.
\end{equation}
where $h^i$ satisfy $\lim\limits_{|x_n|\to +\infty}h^i(x)=\infty.$

We have the main results as follow,
\begin{thm}
	Let $C_{\infty}=\Omega\times(-\infty, \infty)$ be a infinite cylinder in $\br^n$, $\Omega=B_R$ be a arbitrary ball with radius $R$ in $\br^{n-1}$. Assume ${\bf f}$ satisfy \ref{F0}, \ref{Fziinf}, \ref{Fzj}, \ref{Fp}, \ref{Fsymall} and \ref{FH_i}. Let ${\bf u}=\left(u^1,\cdots,u^m\right)\in \left[C^2\left(\overline{C_{\infty}}\right)\right]^m$ be a group of strictly convex solutions to \eqref{eq:cylinderinfintro} satisfying some growth conditions as in \cref{sec:thewhole,sec:ubt}, then each $u^i$ must be radially symmetric and strictly increasing respect to the axis crossing the center of $B_R$.
	
	More precisely, denote the center of $B_R$ as $x^*=\left((x^*)',x^*_n\right)\in\br^n$, and denote $r=|x'-(x^*)'|$, then for $i=1\dots,m$, each $u^i$ must be
	\begin{equation*}
	u^i(x)=u^i(r,x_n),\ x\in C_{\infty},
	\end{equation*}
	moreover, 
	\begin{equation*}
	\frac{\partial u^i}{\partial r}(r,x_n)>0,\ x\in C_{\infty}.
	\end{equation*}
\end{thm}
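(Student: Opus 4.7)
The plan is to apply the moving plane method in every direction perpendicular to the cylinder's axis $e_n$. After translation assume the center of $B_R$ sits at the origin in $\br^{n-1}$; since $B_R$ is invariant under rotations in $x'$ and \ref{Fsymall} provides the corresponding invariance of $\mathbf{f}$, it suffices to treat the single direction $e_1$ and then rotate. For $\lambda\in(-R,0]$ write $T_\lambda=\{x_1=\lambda\}$, $\Sigma_\lambda=\{x\in C_\infty:-R<x_1<\lambda\}$, $x^\lambda=(2\lambda-x_1,x_2,\ldots,x_n)$, and $w_\lambda^i(x)=u^i(x^\lambda)-u^i(x)$. The goal is to show $\lambda_0:=\sup\{\lambda:w_\mu^i\geq 0\text{ on }\Sigma_\mu,\ \forall\mu\leq\lambda,\ \forall i\}=0$, which combined with the symmetric downward sweep from $\lambda=+R$ yields symmetry of each $u^i$ in $x_1$, hence (by rotation) radial symmetry in $x'$ about $(x^*)'$.

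The linearization proceeds from the identity $\det A-\det B=\operatorname{tr}\!\bigl(\int_0^1 \operatorname{cof}(sA+(1-s)B)\,ds\cdot(A-B)\bigr)$ applied to the strictly positive definite Hessians $D^2u^i$ and $D^2u^i(\cdot^\lambda)$, together with a mean-value expansion of the right-hand side using the matrix $\widetilde{D}$ from \eqref{eq:dijinf} and \ref{Fp} to absorb the $\nabla u^i$ differences; this yields on $\Sigma_\lambda$ a cooperative linear elliptic system
\begin{equation*}
L_\lambda^i w_\lambda^i + \mathbf{b}_\lambda^i\cdot\nabla w_\lambda^i = \sum_{k=1}^m c_\lambda^{ik}(x)\,w_\lambda^k,\qquad 1\leq i\leq m,
\end{equation*}
where $L_\lambda^i$ is the linearized Monge--Amp\`ere operator (uniformly elliptic on any compact set on which the convex $u^i$ and $u^i(\cdot^\lambda)$ are $C^2$-bounded), the off-diagonal coupling $c_\lambda^{ik}\leq 0$ for $k\neq i$ comes from $\widetilde{d_{ik}}$ via \ref{Fzj} and \cref{rmk:dijinf}, $w_\lambda^i\equiv 0$ on $T_\lambda\cap C_\infty$, and $w_\lambda^i\geq 0$ on the reflected portion of $\partial C_\infty$ by the assumed radial symmetry in $x'$ of the Dirichlet datum $h^i$.

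For the initial step, when $\lambda$ is close to $-R$ the slab $\Sigma_\lambda$ is thin in $x_1$ but unbounded in $x_n$. Apply the narrow-region maximum principle on a truncation $\Sigma_\lambda\cap\{|x_n|\leq M\}$; the positivity of the leading principal minors $\widetilde{D_i}$ at infinity guaranteed by \ref{FH_i} provides a super-solution barrier at the far $x_n$-faces, so passing $M\to\infty$ gives $w_\lambda^i\geq 0$ on $\Sigma_\lambda$ for such $\lambda$. To move the plane, suppose $\lambda_0<0$ for contradiction; continuity gives $w_{\lambda_0}^i\geq 0$, and the strong maximum principle for cooperative systems forces either $w_{\lambda_0}^i\equiv 0$ or $w_{\lambda_0}^i>0$ in $\Sigma_{\lambda_0}$. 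The former contradicts the boundary datum on $\partial C_\infty$ when $\lambda_0<0$, so Hopf's lemma on $T_{\lambda_0}\cap C_\infty$ yields $\partial_{x_1}w_{\lambda_0}^i<0$. Repeating the narrow-region-plus-infinity argument on the added thin slab $\{\lambda_0<x_1<\lambda_0+\varepsilon\}$ then pushes the plane slightly to the right, contradicting the maximality of $\lambda_0$. Hence $\lambda_0=0$, each $u^i$ is symmetric in $x_1$, and the strict Hopf inequality gives $\partial_{x_1}u^i>0$ for $x_1>0$; rotating in $e_1$ produces the claimed radial symmetry and strict radial monotonicity.

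The hard part will be the unboundedness of $\Sigma_\lambda$ in the $x_n$-direction, which removes the compactness ordinarily used to produce the uniform positive lower bound on $w_{\lambda_0}^i$ needed to push the plane past any compact region. Condition \ref{FH_i} is precisely the structural hypothesis that rescues the argument: positive-definiteness of the limiting leading principal minors of $\widetilde{D}$ yields an effective coercive self-coupling in the linearized system at infinity, against which negative tails of $w_\lambda^i$ cannot concentrate. Making this rigorous requires combining an ABP-type estimate on the exhausting slabs $\{|x_n|\leq M\}$ with a barrier in the tail $|x_n|>M$, uniformly in $\lambda\in(-R,0]$ and simultaneously for every component of $\mathbf{u}$; this is the technical heart of the proof and is where the growth conditions alluded to in the statement and spelled out in \cref{sec:thewhole,sec:ubt} enter decisively.
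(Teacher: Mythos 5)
Your reduction to moving planes in each cross-sectional direction and then rotating via \ref{Fsymall} matches the paper's derivation of this corollary from \cref{thm:cylinderinf}. But the execution has a concrete sign error that cannot be waved away: with your definition $w_\lambda^i(x)=u^i(x^\lambda)-u^i(x)$ the target inequality on $\Sigma_\lambda=\{x_1<\lambda\}$, $\lambda<0$, must be $w_\lambda^i\leq 0$, not $\geq 0$. For a strictly convex $u^i$ that blows up at infinity and (as one is trying to prove) increases away from the axis, the reflected point $x^\lambda$ lies strictly inside $C_\infty$ (since $|(x^\lambda)'|<|x'|$), so $u^i(x^\lambda)<u^i(x)$; in particular, on $\partial C_\infty\cap\Sigma_\lambda$ one has $w_\lambda^i(x)=u^i(x^\lambda)-h^i(x)<0$, directly contradicting your claimed boundary inequality $w_\lambda^i\geq 0$. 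The sign propagates through the Hopf step and the directional derivative on $T_\lambda$, so the chain of inequalities as written proves the wrong monotonicity. (Compare the paper's $U^i_\lambda\leq 0$ throughout \cref{sec:ubt}.)

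The second, more structural, gap is in how you handle the non-compactness in the $x_n$-direction. You propose an ABP-on-exhausting-slabs plus a barrier ``provided by \ref{FH_i},'' but \ref{FH_i} is a condition on the coupling matrix $\widetilde D$ of the linearized system, not a source of supersolutions, and no barrier construction is sketched. The paper instead splits the argument into (a) a lemma (\cref{lemma:cylinderinf1}) that uses the growth conditions \eqref{eq:inf2}, \eqref{eq:inf3} to force $\limsup_{|x_n|\to\infty}U^i_\lambda\leq 0$, so that a positive supremum of $U^i_\lambda$ is actually attained in $\overline{\Sigma_\lambda}$; and (b) a linear-algebra argument (\cref{lemma:inf2}, \cref{lemmaA}, Cramer's rule) applied \emph{at the interior maximum points}, where the positivity of the principal minors $\widetilde{D_i}$ from \ref{FH_i} shows the vector of maxima $(U^i_\lambda(y_i))$ must be $\leq 0$, a contradiction. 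You omit the first step entirely — without it nothing guarantees the sup is achieved — and reinterpret the second as a barrier, which is a different and, as written, unsubstantiated mechanism. Likewise, the paper's initial advance of the plane near $\{x_1=\lambda_0\}$ comes from a Hopf argument at the cylinder corner giving $\partial u^i/\partial x_1<0$ on a neighborhood, rather than a narrow-region maximum principle; the latter would again require you to know in advance that a positive sup is attained in the unbounded slab.
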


We mainly follow the moving plane method with concrete procedures proposed by Troy\cite{troy_symmetry_1981} and Busca \cite{busca_symmetry_2000}, and recently developed by Ma-Liu \cite{ma_symmetry_2010-1,liu_symmetry_2012,liu_symmetry_2013}. Various maximum principles and Hopf’s lemmas are repeatedly used in the proof.

With respect to the cases of unbounded domains, we mainly improve the existing result by reducing the smoothness condition on the right-hand side $f^i$ from $C^1$ to Lipschitz continuous. In particular, we simplify the assumptions in \cite{ma_symmetry_2010-1} for the case of whole spaces, while our results for the case of half spaces and of the unbounded tube shape domains are new. The method is spiritually similar to \cite{santos_symmetry_2020}, where symmetry properties were obtained for positive solutions to certain fully nonlinear elliptic systems mainly dominated by Pucci operators.

This paper is organized as follows. In \cref{sec:pre}, we present some preliminary results for the moving plane method. \cref{sec:thewhole,sec:thehalf,sec:ubt} are concerned with the cases of unbounded tubes, the whole spaces and the half spaces, respectively. More specifically, in \cref{sec:thewhole}, we study the entire problem of Monge-Amp\`ere systems in the whole space; in \cref{sec:thehalf}, we deal with the Neumann boundary problem of Monge-Amp\`ere systems in the half space; and \cref{sec:ubt} is devoted to the case of unbounded tube shape domains.  

\vskip 0.2in
\section{Some Preliminaries}\label{sec:pre}
\noindent

Note that, in what follows, we always consider the classical solutions to the problem, that is, the solutions being twice continuously differentiable up to the boundary. This is always the case if each $f^i(x,{\bf u}(x),\nabla u^i(x))$ is $C^\alpha(\overline{\Omega})$ as a function of $x$ by the standard regularity theory of Monge-Amp\`ere equation; see \cite{figalli_monge-ampere_2017,gilbarg_elliptic_2001,le_schauder_2017,cheng_regularity_1977}. And in order to assure the ellipticity of the equations, the solutions are always considered to be strictly convex.

Here are some notations preparing for the moving plane method.
Fixed a direction vector $\nu\in\br^n$ with $|\nu|=1$, and a real number $\lambda\in\br$, we defined the related half space
\begin{equation*}
	\Sigma_{\lambda,\nu}:=\{x\in\Omega\ |\ x\cdot\nu<\lambda\},
\end{equation*}
and the corresponding hyperplane
\begin{equation*}
	T_{\lambda,\nu}:=\{x\in\Omega\ |\ x\cdot\nu=\lambda\}.
\end{equation*}

Let $x_{\lambda,\nu}$ be the reflection of $x\in\overline{\Omega}$ through $T_{\lambda,\nu}$, that is 
\begin{equation*}
	x_{\lambda,\nu}:=x+2(\lambda-x\cdot\nu)\nu.
\end{equation*}

correspondingly, for any set $A\subset\br^n$, let $A^\nu_\lambda$ be the reflection through $T_{\lambda,\nu}$, that is
\begin{equation*}
	A^\nu_\lambda:=\{x_{\lambda,\nu}=x+2(\lambda-x\cdot\nu)\nu\ |\ x\in A\}.
\end{equation*}

We denote that for a invertible matrix $M$, $M^{jk}:=(M^{-1})_{jk}$, and for two matrices $M_1,M_2$, denoted the Frobenius inner product as  $$\langle M_1,M_2\rangle_F:=\sum\limits_{j,k=1}^n(M_1)_{jk}(M_2)_{jk}=\operatorname{tr}\left(M_1^TM_2\right),$$
especially, if one of them is symmetric, then $\langle M_1,M_2\rangle_F=\operatorname{tr}\left(M_1M_2\right)$.

For a function $u\in C^2(\overline{\Omega})$, we define the reflected function $u_{\lambda,\nu}(x)$ through $T_{\lambda,\nu}$ as follow,
\begin{equation*}
u_{\lambda,\nu}(x):=u(x_{\lambda,\nu})=u(x+2(\lambda-x\cdot\nu)\nu),
\end{equation*}
and we have
\begin{equation*}
\frac{\partial u_{\lambda,\nu}}{\partial x_j}(x)=\sum\limits_{i=1}^n\frac{\partial u}{\partial x_i}(x_{\lambda,\nu})(\delta_{ij}-2\nu_i\nu_j)=\nabla u(x_{\lambda,\nu})\cdot \mu^{\nu}_j,
\end{equation*}
where $\mu^{\nu}_j:=(-2\nu_1\nu_j,\cdots,1-2\nu_j^2,\cdots,-2\nu_n\nu_j),$
and then
\begin{equation*}
\frac{\partial^2 u_{\lambda,\nu}}{\partial x_k\partial x_j}(x)=\nabla (\frac{\partial u_{\lambda,\nu}}{\partial x_j}(x))\cdot \mu^{\nu}_j=\nabla (\nabla u(x_{\lambda,\nu})\cdot \mu^{\nu}_k)\cdot \mu^{\nu}_j=\langle D^2(x_{\lambda,\nu}),(\mu^{\nu}_k)^T\mu^{\nu}_j\rangle_F,
\end{equation*}
thus
\begin{equation*}
\nabla u_{\lambda,\nu}(x)=\nabla u(x_{\lambda,\nu})\cdot
\begin{pmatrix}
1-2\nu_1^2 & -2\nu_1\nu_2 & \cdots & -2\nu_1\nu_n \\
-2\nu_2\nu_1 & 1-2\nu_2^2 & \cdots & -2\nu_2\nu_n \\
\vdots & \vdots & \ddots & \vdots \\
-2\nu_n\nu_1 & -2\nu_n\nu_2 & \cdots & 1-2\nu_n^2
\end{pmatrix}
=\nabla u(x_{\lambda,\nu})\cdot(I-2\nu^T\nu),
\end{equation*}

And we at last define the difference function $U_\lambda(x)$
\begin{equation*}
U_{\lambda,\nu}(x):=u_{\lambda,\nu}(x)-u(x).
\end{equation*}

Once if the domain is somehow convex in one direction, for example $\nu=e_1=(1,0,\dots,0)\in\br^n$, in this case, for shortly, we denote
\begin{align*}
	&T_\lambda:=T_{\lambda,e_1}=\{x\in\Omega\ |\ x_1=\lambda\},\\ &\Sigma_\lambda:=\Sigma_{\lambda,e_1}=\{x\in\Omega\ |\ x_1<\lambda\},\\ 
	&x_\lambda:=x_{\lambda,e_1}=(2\lambda-x_1,x'),\ where\  x'=(x_2,\dots,x_n)\in\br^{n-1}\\ &u_\lambda(x):=u_{\lambda,e_1}(x)=u(2\lambda-x_1,x'). 
\end{align*} 

We can easily see that $$\nabla u_\lambda(x)=\left(-\frac{\partial u}{\partial x_1}(x_\lambda),\frac{\partial u}{\partial x_2}(x_\lambda),\dots,\frac{\partial u}{\partial x_n}(x_\lambda)\right)=\nabla u(x_\lambda)\cdot \bar{D},$$ and the Hessian matrix of $u_\lambda$ is
\begin{equation*}
D^2u_\lambda(x)=
\begin{pmatrix}
	\frac{\partial^2 u}{\partial x_1^2}(x_\lambda) & -\frac{\partial^2 u}{\partial x_1\partial x_2}(x_\lambda) & \cdots & -\frac{\partial^2 u}{\partial x_1\partial x_n}(x_\lambda) \\
	-\frac{\partial^2 u}{\partial x_2\partial x_1}(x_\lambda) & \frac{\partial^2 u}{\partial x_2^2}(x_\lambda) & \cdots & \frac{\partial^2 u}{\partial x_2\partial x_n}(x_\lambda) \\
	\vdots & \vdots & \ddots & \vdots \\
	-\frac{\partial^2 u}{\partial x_n\partial x_1}(x_\lambda) & \frac{\partial^2 u}{\partial x_n\partial x_2}(x_\lambda) & \cdots & \frac{\partial^2 u}{\partial x_n\partial x_n}(x_\lambda)
\end{pmatrix}
=\bar{D}^TD^2u_(x_\lambda)\bar{D},
\end{equation*}
where $\bar{D}=\operatorname{diag}\{-1,1,\cdots,1\}$. Note that $|\nabla u_\lambda(x)|=|\nabla u(x_\lambda)|$ and the eigenvalue of $D^2u_\lambda(x)$ are the same as $D^2u(x_\lambda)$, especially,
\begin{equation}\label{eq:det}
	\det(D^2u_\lambda(x))=\det(D^2u(x_\lambda)).
\end{equation}

And we define the difference function in direction $x_1$, $$U_\lambda(x):=U_{\lambda,e_1}(x)=u_\lambda(x)-u(x).$$

Note that at the special case $x=x_\lambda$, that is, $x\in\overline{T_\lambda}$, we have the following useful results:
\begin{equation}\label{nablaonT}
	\nabla U_\lambda(x)=\left(-2\frac{\partial u}{\partial x_1}(x),0,\dots,0\right);
\end{equation}
\begin{equation}\label{HessainonT}
D^2U_\lambda(x)=
\begin{pmatrix}
0 & -2\frac{\partial^2 u}{\partial x_1\partial x_2}(x) & \cdots & -2\frac{\partial^2 u}{\partial x_1\partial x_n}(x) \\
-2\frac{\partial^2 u}{\partial x_2\partial x_1}(x) & 0 & \cdots & 0 \\
\vdots & \vdots & \ddots & \vdots \\
-2\frac{\partial^2 u}{\partial x_n\partial x_1}(x) & 0 & \cdots & 0
\end{pmatrix}
.
\end{equation}

\vskip 0.2in

Now we are ready to do some preliminary calculations for \eqref{eq:MA}. Firstly, we have \begin{equation}\label{eq:ddet}
	\frac{\partial}{\partial q_{ij}}\det(M)=\det(M)M^{ij}, ~~~\forall M\text{ being positive definite},
\end{equation}
then by the integral form of mean value theorem, we have

 \begin{equation}\label{eq:mv}
 \det\left(D^2u^i_\lambda(x)\right)-\det\left(D^2u^i(x)\right)=\langle {\bf A^i}(x),D^2U^i_\Lambda(x)\rangle_F=\operatorname{tr}\left({\bf A^i}(x)D^2U^i_\lambda(x)\right),
 \end{equation}
 where ${\bf A^i}(x):=(a^i_{jk}(x))_{j,k=1}^n$ with
 \begin{equation}\label{eq:aij}
 a^i_{jk}(x):=\int_0^1\det\left((1-t)D^2u^i_\lambda(x)+tD^2u^i(x)\right)\left((1-t)D^2u^i_\lambda(x)+tD^2u^i(x)\right)^{jk}dt.
 \end{equation}
 
  On the other hand,  $\forall\lambda<0$, $x\in\Sigma_\lambda$, we have $x_1<(x_\lambda)_1<-x_1$, hence by \eqref{eq:det} and \ref{Fanti}, $\forall x\in\Sigma_\lambda$ such that $\frac{\partial u^i}{\partial x_1}(x)\leq0$, we have
 \begin{equation}\label{eq:anti}
 \begin{split}
\det(D^2u^i_\lambda(x))&=\det(D^2u^i(x_\lambda))\\
 &=f^i(x_\lambda,{\bf u}(x_\lambda),\nabla u^i(x_\lambda))\\
 &=f^i(x_\lambda,{\bf u}_\lambda(x),\overline{\nabla u^i_\lambda(x)})\\
 &\geq f^i(x,{\bf u}_\lambda(x),\nabla u^i_\lambda(x)),
 \end{split}
 \end{equation}
 hence by \ref{Fziinf},\ref{Fzj} and \ref{Fp}, we have
 \begin{equation}\label{eq:splitinf}
 \begin{split}
 &\det(D^2u^i_\lambda)-\det(D^2u^i)\\
 \geq& f^i(x,{\bf u}_\lambda,\nabla u^i_\lambda)-f^i(x,{\bf u},\nabla u^i)\\
 =&f^i(x,{\bf u}_\lambda,\nabla u^i_\lambda)-f^i(x,{\bf u},\nabla u^i_\lambda)+f^i(x,{\bf u},\nabla u^i_\lambda)-f^i(x,{\bf u},\nabla u^i)\\
 =&f^i(x,{\bf u}_\lambda,\nabla u^i_\lambda)-f^i(x,u^1_\lambda,\cdots,u^{m-1}_\lambda,u^m,\nabla u^i_\lambda)+\cdots\\
 &+f^i(x,u^1_\lambda,\cdots,u^i_\lambda,\cdots,u^m,\nabla u^i_\lambda)-f^i(x,u^1_\lambda,\cdots,u^i,\cdots,u^m,\nabla u^i_\lambda)+\cdots\\
 &+f^i(x,u^1_\lambda,u^2,\cdots,u^m,\nabla u^i_\lambda)-f^i(x,{\bf u},\nabla u^i_\lambda)\\
 &+f^i(x,{\bf u},\nabla u^i_\lambda)-f^i(x,{\bf u},\nabla u^i)\\
 \geq& \widetilde{d_{im}}(x,u^1_\lambda,\cdots,u^{m-1}_\lambda,u^m,\nabla u^i_\lambda,U^m_\lambda)U^m_\lambda+\cdots\\
&+\widetilde{d_{ii}}(x,u^1_\lambda,\cdots,u^i,\cdots,u^m,\nabla u^i_\lambda,U^i_\lambda)U^i_\lambda+\cdots\\
&+\widetilde{d_{i1}}(x,{\bf u},\nabla u^i_\lambda,U^1_\lambda)U^1_\lambda-h_{f^i,p}|\nabla U^i_\lambda|,
 \end{split}
 \end{equation}
where $d_{ij}$ are defined as \eqref{eq:dijinf}, and $h_{f^i,p}$ is the Lipschitz constants of $f^i$ in \ref{Fp}.
 
 Then combining \eqref{eq:mv} and \eqref{eq:splitinf} we can obtain an elliptic inequality of $U^i_\lambda$ in $\Sigma_\lambda$:
 \begin{equation}\label{eq:EIinf}
\begin{split}
\operatorname{tr}\left({\bf A^i}(x)D^2U^i_\lambda(x)\right)+&{\bf B^i}(x)\cdot\nabla U^i_\lambda(x)\\
\geq&\sum_{j=1}^m \widetilde{d_{ij}}(x,u^1_\lambda,\cdots,u^j,\cdots,u^m,\nabla U^i_\lambda,U^j_\lambda)U^j_\lambda,
\end{split}
\end{equation}
 with ${\bf A^i}(x)$ defined as \eqref{eq:aij}, and 
\begin{equation}\label{eq:bi}
{\bf B^i(x)}:=\frac{h_{f^i,p}}{|\nabla U^i_\lambda(x))|}\chi_{\{|\nabla U^i_\lambda(x))|\neq0\}}\nabla U^i_\lambda(x),
\end{equation}

\vskip 0.2in

Next we give some lemmas here for convenience.
In the procedure of using moving plane methods, the following strong maximum principle and Hopf's Lemma will be crucial. The proof of it can be found in \cite{gilbarg_elliptic_2001}.
\begin{lemma}[Maximum Principle $\&$ Hopf's Lemma]\label{lem:SMP-HL}
	Let $\Omega\in\br^n$ be a domain, $w\in C^2(\Omega)$ be a non-positive solution in $\Omega$ to the following elliptic inequality
	\begin{equation*}
		\operatorname{tr}\left(A(x)D^2w(x)\right)+{\bf B}(x)\cdot\nabla w(x)+c(x)w(x)\geq0,
	\end{equation*}
	where $A(x):=(a_{ij}(x))_{i,j=1}^n$, ${\bf B}(x):=(b_i(x))_{i=1}^n$, and $a_{ij},b_i,c\in L_{\operatorname{loc}}^\infty(\Omega)$ with $A(x)$ is locally positive definite in $\Omega$. Then either $w\equiv 0$ or $w<0$ in $\Omega$.
	
	Moreover, if $w(x_0)>0$	for some $x_0\in\Omega$, and $w(\bar{x})=0$ for some $\bar{x}\in\partial\Omega$, near which $w$ is continuously differentiable, then	
	$$\frac{\partial w}{\partial \nu}(\bar{x})>0,$$	
	where $\nu$ is the unit outer normal of $\partial\Omega$.
\end{lemma}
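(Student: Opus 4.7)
The lemma combines the strong maximum principle and the Hopf boundary lemma for the linear elliptic operator $Lw := \operatorname{tr}(AD^2w) + {\bf B}\cdot\nabla w + cw$. Both statements are classical, and my plan is to reduce to the standard form with non-positive zeroth-order coefficient and then apply the usual interior-sphere barrier argument. Writing $c = c^{+} - c^{-}$ and exploiting $w\le 0$, one has $c^{+}(x)\,w(x)\le 0$ pointwise, so with $\tilde Lw := \operatorname{tr}(AD^2w) + {\bf B}\cdot\nabla w - c^{-}(x)\,w$ the hypothesis $Lw\ge 0$ becomes $\tilde Lw \ge -c^{+}w \ge 0$. It therefore suffices to treat $\tilde L$, whose zeroth-order coefficient $-c^{-}$ is $\le 0$.

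\textbf{Hopf's lemma.} I would prove this half first, and then derive the strong maximum principle from it. Let $B_R(y)\subset\Omega$ be an interior ball touching $\bar x$ (available under the implicit interior-sphere condition near $\bar x$). On the annulus $A=\{R/2<|x-y|<R\}$, introduce the Hopf barrier $h(x)=e^{-\alpha|x-y|^2}-e^{-\alpha R^2}$. A direct differentiation shows that $\tilde Lh$ equals $e^{-\alpha|x-y|^2}$ times a quadratic-in-$\alpha$ expression whose leading term is $4\alpha^{2}(x-y)^{\top}A(x)(x-y)$, plus a nonnegative piece $c^{-}(x)e^{-\alpha R^{2}}$. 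Since $A$ is uniformly elliptic on the compact set $\overline{A}$ and the remaining coefficients are bounded there, $\alpha$ can be chosen so large that $\tilde Lh>0$ on $A$. Now set $v=w+\varepsilon h$. On the inner sphere $|x-y|=R/2$ one has $w\le -\delta<0$ by continuity of $w$ and $h$ is bounded, hence $v<0$ for $\varepsilon$ small; on the outer sphere $h=0$ and $w\le 0$, so $v\le 0$; thus $v\le 0$ on $\partial A$. Since $\tilde Lv\ge\varepsilon\tilde Lh>0$ on $A$ with $-c^{-}\le 0$, the weak maximum principle forces $v\le 0$ throughout $A$. Taking the outward normal derivative at $\bar x$ (where $v(\bar x)=0$ is the maximum) yields
\begin{equation*}
\frac{\partial w}{\partial\nu}(\bar x)\ \ge\ -\varepsilon\,\frac{\partial h}{\partial\nu}(\bar x)\ =\ 2\varepsilon\alpha R\, e^{-\alpha R^{2}}\ >\ 0.
\end{equation*}

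\textbf{Strong maximum principle.} Suppose $w\not\equiv 0$ and, for contradiction, that the zero set $Z:=\{w=0\}\cap\Omega$ is nonempty. Pick $p\in\{w<0\}$ strictly closer to $Z$ than to $\partial\Omega$ and grow a ball around $p$ inside $\{w<0\}$ until it first meets $Z$ at some point $\bar x$; call this ball $B_R(y)$. Now regard $\bar x$ as a boundary point of the subdomain $B_R(y)$ (for which the interior sphere condition is automatic) and apply the Hopf conclusion just established to obtain $\partial w/\partial\nu(\bar x)>0$ with $\nu=(\bar x-y)/R$. But $\bar x$ is an interior maximum of $w$ in $\Omega$, so $\nabla w(\bar x)=0$, a contradiction. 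Hence $w<0$ everywhere on $\Omega$.

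\textbf{Main obstacle.} The only delicate point is ensuring that $\tilde Lh$ is strictly positive uniformly on $A$: since $A$ is relatively compact in $\Omega$ and $a_{ij},b_i,c\in L^\infty_{\mathrm{loc}}$ with $A(x)$ locally positive definite, one extracts a uniform lower bound $\lambda_0>0$ on the smallest eigenvalue of $A$ on $\overline{A}$, and then the $4\alpha^{2}\lambda_{0}|x-y|^{2}$ contribution dominates all $O(\alpha)$ and $O(1)$ terms for $\alpha$ large; this is a routine but nontrivial verification. A minor remark: the clause ``$w(x_0)>0$'' in the Hopf part of the statement appears to be a typographical inversion, and the argument above is what I would carry out under the natural reading ``$w(x_0)<0$'' consistent with the standing hypothesis $w\le 0$.
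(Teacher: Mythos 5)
The paper does not prove this lemma itself; it simply cites Gilbarg--Trudinger, and your argument is the standard one given there, supplemented by the observation (which GT make in a remark after the strong maximum principle) that when $w\le 0$ one can absorb $c^{+}w\le 0$ into the right-hand side and work with the non-positive zeroth-order coefficient $-c^{-}$. The barrier computation, the choice of the annulus, the weak maximum principle step, and the interior-tangent-ball argument for the strong maximum principle are all correct, and you rightly flag the sign typo $w(x_0)>0$.

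There is, however, one logical gap in the \emph{order} in which you present the two halves. In your proof of Hopf's lemma you write that on the inner sphere $|x-y|=R/2$ one has $w\le -\delta<0$ ``by continuity of $w$''; but continuity alone gives nothing here, since the standing hypotheses only provide $w\le 0$ together with a single point $x_0$ at which $w(x_0)<0$. Without the strong maximum principle already in hand, the inner sphere may well intersect the zero set $\{w=0\}$, and then no uniform $\delta>0$ exists and the comparison with $\varepsilon h$ fails. The clean way to organize this (and the way GT do it) is to first state the barrier lemma under the \emph{explicit} hypothesis that $w<0$ in the interior ball $B_R(y)$ (or at least on the inner sphere), prove the strong maximum principle from that restricted version exactly as you do in your second paragraph (there the ball is constructed inside $\{w<0\}$, so the hypothesis holds automatically), and only then deduce the full boundary Hopf lemma as stated in the paper, since at that point the strong maximum principle yields $w<0$ throughout $\Omega$ once $w\not\equiv 0$. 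Your two paragraphs contain exactly the ingredients needed; the fix is simply to make the restricted hypothesis in the Hopf step explicit rather than asserting it by continuity.
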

\vskip 0.2in

In our case, since the domain we dealing with may not satisfy the interior ball condition, we will use the boundary point Hopf lemma at a corner instead, which is the content of the following lemma in \cite{gidas_symmetry_1979} (due to Serrin \cite{serrin_symmetry_1971}).
\begin{lemma}[Serrin's Corner Lemma]\label{lemmaS}
	Let $\Omega$ be a domain in $\br^n$ with the origin $Q$ on its boundary. Assume that near $Q$ the boundary consists of two transversally intersecting $C^2$ hypersurfaces $\{\rho=0\}$ and $\{\sigma=0\}$. Suppose $\rho,\sigma<0$ in $\Omega$. Let $w$ be a function in $C^2(\overline{\Omega})$, with $w<0$ in $\Omega$, $w(Q)=0$, satisfying the differential inequality
	\begin{equation*}
	a_{ij}w_{x_ix_j}+b_i(x)w_{x_i}+c(x)w\geq 0\text{ in }\Omega,
	\end{equation*}
	with uniformly bounded coefficients satisfying $a_{ij}\xi_i\xi_j\geq c_0|\xi|^2$. Assume 
	\begin{equation}\label{eq:SL}
	a_{ij}\rho_{x_i}\sigma_{x_j}\geq 0\text{ at }Q.
	\end{equation}
	If this is zero, assume furthermore that $a_{ij}\in C^2$ in $\overline{\Omega}$ near $Q$, and that 
	\begin{equation*}
	D(a_{ij}\rho_{x_i}\sigma_{x_j})= 0\text{ at }Q,
	\end{equation*}
	for any first order derivative $D$ at $Q$ tangent to the submanifold $\{\rho=0\}\cap\{\sigma=0\}$. Then, for any direction $s$ at $Q$ which enters $\Omega$ transversally to each hypersurface,
	\begin{align*}
	&\frac{\partial w}{\partial s}<0\text{ at $Q$ in case of strict inequality in \eqref{eq:SL},}\\
	&\frac{\partial w}{\partial s}<0\text{ or }\frac{\partial^2 w}{\partial s^2}<0\text{ at $Q$ in case of equality in \eqref{eq:SL}.}
	\end{align*}
\end{lemma}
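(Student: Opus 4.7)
The proof proceeds by contradiction, dichotomized by the sign of the ``elliptic cosine'' $A := a_{ij}\rho_{x_i}\sigma_{x_j}(Q) \geq 0$. In the strict case $A > 0$, the result will follow from the classical Hopf lemma (Lemma~\ref{lem:SMP-HL}) applied on a suitably chosen interior tangent ball; in the degenerate case $A = 0$, no such ball fits, and the analogous information must instead be extracted from a carefully tuned corner barrier built from the product $\rho\sigma$.

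\textbf{Strict case.} I would first perform the linear change of variables $x = a_{ij}(Q)^{1/2}y$, which turns the principal part of $L$ at $Q$ into the Euclidean Laplacian and rewrites the hypothesis as $\nabla_y\tilde\rho\cdot\nabla_y\tilde\sigma(Q) > 0$. Equivalently, the two outward Euclidean normals to $\Omega$ at $Q$ form a strictly acute angle, so the inward tangent cone of $\Omega$ at $Q$ is strictly obtuse. Then, for every direction $s$ strictly inside this cone (i.e., transversal to each hypersurface), the $C^2$ smoothness of the two hypersurfaces together with the strict obtuseness permits one to inscribe a ball $B\subset\Omega$ with $Q\in\partial B$ and inner unit normal $s/|s|$ at $Q$. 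Applying Lemma~\ref{lem:SMP-HL} on $B$ to $w$ (which is strictly negative in $B$, vanishes at $Q\in\partial B$, and satisfies $Lw\geq 0$ in $B$) yields $\partial w/\partial\nu_B^{\mathrm{out}}(Q) > 0$, which by construction is exactly $\partial w/\partial s(Q) < 0$.

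\textbf{Degenerate case.} Here I would use the barrier $\phi := \rho\sigma$. Since $\rho(Q) = \sigma(Q) = 0$, the function $\phi$ vanishes on each boundary hypersurface near $Q$, is non-negative in $\Omega$ near $Q$, has $\nabla\phi(Q) = 0$, and a direct computation gives
\[
L\phi(Q) \;=\; 2A \;=\; 0,\qquad \partial_s^2\phi(Q) \;=\; 2\bigl(s\cdot\nabla\rho\bigr)\bigl(s\cdot\nabla\sigma\bigr)\big|_Q \;>\; 0,
\]
the second positivity from the transversality of $s$ to both hypersurfaces (both factors strictly negative). To upgrade $L\phi(Q) = 0$ to $L\tilde\phi \geq 0$ on a full one-sided neighborhood of $Q$, I would Taylor-expand $L(\rho\sigma)$ at $Q$: the hypothesis $D(a_{ij}\rho_{x_i}\sigma_{x_j})(Q) = 0$ along first-order derivatives tangent to the edge $\{\rho=\sigma=0\}$ kills the tangential first-order piece of the expansion, and what remains is absorbed by a small cubic correction $\mu\,\rho\sigma(\rho+\sigma)$, the $C^2$ regularity of $a_{ij}$ near $Q$ supplying the control. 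Call the corrected barrier $\tilde\phi$. On a small ball $B_r(Q)$, set $v := w + \varepsilon\tilde\phi$. Then $Lv\geq 0$ in $B_r(Q)\cap\Omega$, $v = w \leq 0$ on $\partial\Omega \cap B_r(Q)$ (since $\tilde\phi$ vanishes there), and $v\leq 0$ on $\partial B_r(Q)\cap\Omega$ for $\varepsilon$ small (since $w$ is bounded away from $0$ on the spherical cap while $\tilde\phi$ is bounded). After the standard absorption of the $c$-term, the weak maximum principle yields $v \leq 0$ throughout $B_r(Q)\cap\Omega$; since $v(Q) = 0$, the point $Q$ is a maximum of $v$, and evaluating one and two $s$-derivatives produces
\[
\partial_s w(Q) \;\leq\; -\varepsilon\,\partial_s\tilde\phi(Q) \;=\; 0,\qquad \partial_s^2 w(Q) \;\leq\; -\varepsilon\,\partial_s^2\tilde\phi(Q) \;<\; 0
\]
(the second inequality invoked precisely when the first is an equality), which is the claimed dichotomy.

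The main obstacle is the cubic correction in the degenerate case: the raw barrier $\rho\sigma$ has $L\phi(Q) = 0$ but is not guaranteed to satisfy $L\phi \geq 0$ in a neighborhood, so the comparison step can fail unless the first-order expansion of $L\phi$ is controlled. The tangential-vanishing hypothesis on $a_{ij}\rho_{x_i}\sigma_{x_j}$ is calibrated exactly to annihilate the dangerous tangential first-order terms along the edge, and the $C^2$ regularity of $a_{ij}$ is needed to make the Taylor expansion rigorous and keep the corrective polynomial from destroying the essential properties $\tilde\phi\geq 0$ in $\Omega$, $\tilde\phi \equiv 0$ on $\partial\Omega$, and $\partial_s^2\tilde\phi(Q) > 0$. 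The strict case is comparatively soft once one recognizes $A > 0$ as the geometric statement that the corner is obtuse in the elliptic metric, reducing it to a single application of the interior-ball Hopf lemma.
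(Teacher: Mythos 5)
You should first note that the paper does not prove this lemma at all: it is quoted verbatim from Gidas--Ni--Nirenberg and attributed to Serrin, so your attempt has to be measured against the classical proof. Measured that way, the strict case of your argument contains a genuine error. After normalizing $a_{ij}(Q)=\delta_{ij}$, the condition $a_{ij}\rho_{x_i}\sigma_{x_j}(Q)>0$ does say the interior dihedral angle at $Q$ is obtuse, but an obtuse corner is still a corner: since $\nabla\rho(Q)$ and $\nabla\sigma(Q)$ are linearly independent, the tangent cone of $\Omega$ at $Q$ is the intersection of two genuinely different half-spaces and so contains no half-space, whereas any ball $B$ with $Q\in\partial B$ has a full half-space as its tangent cone at $Q$ (for every direction $v$ with $v\cdot s>0$ the points $Q+tv$, $t$ small, lie in $B$). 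Hence no ball $B\subset\Omega$ can touch $\partial\Omega$ at $Q$, the interior ball condition fails at every transversal corner, and \cref{lem:SMP-HL} cannot be applied there; this failure is precisely why Serrin's lemma exists as a statement separate from Hopf's lemma. The gap is not repairable by the barrier scheme of your degenerate case either: any $C^1$ function $\phi$ with $\phi(Q)=0$ and $\phi\le0$ on both walls near $Q$ necessarily has $\nabla\phi(Q)=a\nabla\rho(Q)+b\nabla\sigma(Q)$ with $a,b\ge0$, hence $\partial_s\phi(Q)\le0$ for every entering transversal $s$, so a comparison $w\le-\varepsilon\phi$ can never yield the first-derivative conclusion $\partial_s w(Q)<0$. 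The strict case therefore requires a genuinely different construction, and that is exactly the part of Serrin's proof your proposal is missing.

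The degenerate case is closer to the classical argument ($\rho\sigma$-type barrier, comparison on $\Omega\cap B_r(Q)$, then first and second $s$-derivatives at $Q$), but as written it does not go through. First, the correction is mis-calibrated: since $a_{ij}\partial_{x_ix_j}\bigl(\rho\sigma(\rho+\sigma)\bigr)=2\bigl(a_{ij}\rho_{x_i}\rho_{x_j}\bigr)\sigma+2\bigl(a_{ij}\sigma_{x_i}\sigma_{x_j}\bigr)\rho+O(\mathrm{dist}^2)$ is strictly negative of first order in $\Omega$ by ellipticity, adding a small positive multiple $\mu\,\rho\sigma(\rho+\sigma)$ makes $L\tilde\phi$ worse; what works is a correction of the opposite sign with a large coefficient (e.g. $\tilde\phi=\rho\sigma\bigl(1-K(\rho+\sigma)\bigr)$, $K$ large), so that these ellipticity terms dominate the $O(|\rho|+|\sigma|)$ errors from $b_i$ and from the normal first derivatives of $a_{ij}\rho_{x_i}\sigma_{x_j}$. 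Second, the hypothesis $D(a_{ij}\rho_{x_i}\sigma_{x_j})(Q)=0$ holds only at the single point $Q$, so along the edge $\{\rho=\sigma=0\}$ at distance $\delta$ from $Q$ this quantity is merely $o(\delta)$, which is not controlled by $K(|\rho|+|\sigma|)$ near the edge; the classical proof copes with this by shaping the comparison region and barrier, a step your sketch omits. Third, your boundary check on the spherical cap is false as stated: $w$ is not bounded away from zero on $\partial B_r(Q)\cap\Omega$, because the closure of that cap meets the two walls where $w$ may tend to $0$; one needs the vanishing of $\tilde\phi$ near the walls together with quantitative negativity of $w$ (Hopf at smooth wall points, with attention to the degeneration near the edge) to close this step.
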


In order to overcome the difficulties coming from coupling systems, we need the following lemma of linear algebra.
\begin{lemma}\label{lemmaA}
Let $M=(m_{ij})_{i,j=1}^n$ be a real matrix, satisfying $m_{ij}\leq0, i\neq j$. Assume all the principal ordered minors of $M$ are positive definite, denote its adjoined matrix as $\operatorname{adj}(M)$, then
		\begin{enumerate}[label=(\roman{enumi})]
				\item all the same order minors of $M$ are positive definite,
				\item all the algebraic remainders of $M$ are non-negative definite, that is $\operatorname{adj}(M)_{ij}\geq0$.
		\end{enumerate}
\end{lemma}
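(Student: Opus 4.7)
My plan is to reduce both conclusions to the single fact that $M^{-1}$ is entrywise non-negative, which is essentially the classical characterization of non-singular M-matrices. I will establish this fact by induction on $n$, then deduce (i) by restricting to subsets via a positive test vector, and deduce (ii) from the identity $\operatorname{adj}(M) = \det(M) \cdot M^{-1}$.

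For the inductive step I write $M$ in block form with the $(1,1)$-block equal to the scalar $m_{11}$, and form the Schur complement $S = D - c b^T / m_{11}$, where $b$ and $c$ are the first-row and first-column tails of $M$ and $D$ is the bottom-right $(n-1)\times(n-1)$ block. Because $b, c$ have non-positive entries and $m_{11} > 0$, each off-diagonal entry of $S$ is bounded above by the corresponding entry of $D$ and hence remains non-positive. Moreover, the Schur determinant identity expresses the $k$-th leading principal minor of $M$ as $m_{11}$ times the $(k{-}1)$-th leading principal minor of $S$, transferring the positivity hypothesis to $S$. By the inductive hypothesis $S^{-1} \geq 0$, and substituting this into the standard block inverse formula
\[
M^{-1} = \begin{pmatrix} 1/m_{11} + b^T S^{-1} c / m_{11}^2 & -b^T S^{-1}/m_{11} \\ -S^{-1} c/m_{11} & S^{-1} \end{pmatrix}
\]
I can verify that all four blocks are non-negative: the off-diagonal blocks because $-b^T S^{-1}$ and $-S^{-1} c$ negate a product of non-positive and non-negative factors; the $(1,1)$-block because $b^T S^{-1} c$ is a sum of terms of the form (non-positive)(non-negative)(non-positive), hence non-negative.

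Having shown $M^{-1} \geq 0$, I will set $x := M^{-1}\mathbf{1}$, where $\mathbf{1} = (1,\dots,1)^T$. Invertibility of $M^{-1}$ guarantees that no row vanishes, so $x > 0$ componentwise, and by construction $Mx = \mathbf{1} > 0$. For any $T \subseteq \{1,\dots,n\}$ the restriction $x_T$ is strictly positive and
\[
(M_T x_T)_i = (Mx)_i - \sum_{j \notin T} m_{ij} x_j \geq (Mx)_i > 0,
\]
since $m_{ij} \leq 0$ for $i \neq j$. In particular each diagonal entry $(M_T)_{ii}$ is positive, so I can factor $M_T = D_T(I - P_T)$ with $D_T$ a positive diagonal matrix and $P_T \geq 0$ having zero diagonal; the inequality $P_T x_T < x_T$, combined with the $x_T$-weighted $\ell^\infty$ operator norm, forces $\rho(P_T) < 1$. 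Pairing complex-conjugate eigenvalues then yields $\det(I - P_T) = \prod_\lambda (1-\lambda) > 0$, and therefore $\det M_T = \det D_T \cdot \det(I - P_T) > 0$. This proves (i). Claim (ii) then follows immediately: applying (i) with $T = \{1,\ldots,n\}$ gives $\det M > 0$, and combined with $M^{-1} \geq 0$ the identity $\operatorname{adj}(M) = \det(M)\,M^{-1}$ yields $\operatorname{adj}(M) \geq 0$ entrywise.

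The main obstacle I anticipate is the careful sign-tracking in the Schur-complement induction: I must confirm simultaneously that both structural hypotheses — non-positive off-diagonal entries and positive leading principal minors — propagate from $M$ to $S$, and then that the block-inverse formula yields non-negativity in every one of its four blocks. Once these sign-tracking steps are in place, the remaining arguments are routine combinations of elementary block-matrix manipulations with standard Perron-Frobenius-type inequalities.
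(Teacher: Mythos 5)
The paper states this lemma without giving a proof; it is a standard fact from the theory of non-singular M-matrices (the hypotheses say precisely that $M$ is a Z-matrix whose leading principal minors are all positive). Your argument is correct and self-contained. The Schur-complement induction correctly propagates both structural hypotheses to $S$: the off-diagonal entries of $S$ stay non-positive because $cb^{T}/m_{11}\ge 0$ entrywise, and the identity $\det M_{k}=m_{11}\det S_{k-1}$ transfers positivity of the leading principal minors. The sign check of the four blocks of $M^{-1}$ is sound, and the deduction $m_{ii}>0$ for all $i$ (needed to split $M_{T}=D_{T}(I-P_{T})$) follows correctly from $(M_{T}x_{T})_{i}\ge 1$ and the non-positivity of the off-diagonal contribution. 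The weighted-$\ell^{\infty}$-norm bound $\rho(P_{T})<1$ and the conjugate-pairing argument for $\det(I-P_{T})>0$ are both standard and correctly applied, giving (i) in the form ``all principal minors positive,'' which is the natural reading of the paper's (somewhat loosely worded) statement. Claim (ii) then follows immediately from $\operatorname{adj}(M)=\det(M)\,M^{-1}$. Since the paper offers no proof of its own, there is nothing to compare against, but your route (block-induction for $M^{-1}\ge 0$, then a positive test vector plus Perron--Frobenius for the principal minors) is one of the two classical derivations; the other common one establishes the inverse-positivity via the Neumann series of $I-D^{-1}M$ directly, which you use only at the submatrix stage. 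Either works; yours is complete as written.
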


In order to overcome the difficulties coming from unboundedness of domains, we need two more conditions on $u^i$ at the infinity. For $u^i\in C^1(\br^n)$, we denote the radial derivative as $$\frac{\partial u^i}{\partial r}(x):=\langle\nabla u^i(x), \frac{x}{|x|}\rangle,$$ the tangential derivative as $$\nabla_{\tau}(x):=\nabla u^i(x)-\frac{\partial u^i}{\partial r}(x)\frac{x}{|x|},$$ We need the following condition inspired by \cite{porretta_symmetry_2006}:
\begin{align*}
	&\lim\limits_{|x|\to\infty}\frac{\partial u^i}{\partial r}(x)>0,\\
	&|\nabla_{\tau}(x)|=o\left(\frac{\partial u^i}{\partial r}(x)\right)\ \text{as}\ |x|\to\infty.\stepcounter{equation}\tag{\theequation}\label{eq:inf1}
\end{align*}

Recall that $x_{0,\nu}=-2(x\cdot\nu)\nu+x$, we also require that
\begin{align*}
	&\lim\limits_{x\cdot\nu\to-\infty}u^i(x_{0,\nu})-u^i(x)\leq0,\\
	\text{equivalently,}\  &\lim\limits_{x\cdot\nu\to-\infty}\frac{u^i(x_{0,\nu})}{u^i(x)}\leq1.\stepcounter{equation}\tag{\theequation}\label{eq:inf2}
\end{align*}

\vskip 0.2in

\section{The Whole Space $\Omega=\br^n$.}\label{sec:thewhole}
\noindent

\begin{figure}[ht]
	\centering
	\includegraphics[width=2.5in]{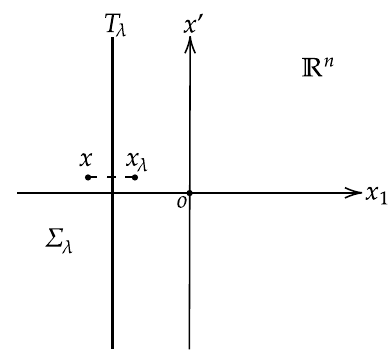}
	\caption{The whole space}
	\label{fig:whole}
\end{figure} 

In this case, we consider the following entire problem:
\begin{equation}\label{eq:whole}
\left\{\begin{array}{rl}
\det(D^2 u^i)\ =&f^i(x,{\bf u},\nabla u^i),\quad\ \text{in}\ \br^n, \\
\lim\limits_{|x|\to +\infty}u^i(x)\ =&\infty,\quad\ \quad\ \quad\ \quad\ \ 1\leq i\leq m, \\
\end{array}
\right.
\end{equation}

The behavior of $u^i$ at the infinity as above are natural since we always assume $u^i$ to be strictly convex.

\vskip 0.2in
\subsection{Main Theorem}

We begin to state our main theorems.

When $\bf f$ being monotonic in one direction, assumed as $\bf e_1$, we can start to examine whether the solution to the system \eqref {eq:whole} will satisfy the corresponding monotonicity along this direction. The main results are the following.

\begin{thm}\label{thm:whole}
	Assume ${\bf f}$ satisfy \ref{F0}, \ref{Fziinf}, \ref{Fzj}, \ref{Fp}, \ref{Fanti} and \ref{FH_i}. Let ${\bf u}=(u^1,\cdots,u^m)$ be a group of $[C^2(\br^n)]^m$ strictly convex solutions of \eqref{eq:whole} satisfying \eqref{eq:inf1} and \eqref{eq:inf2}, then there exists $t_1\leq0$ such that for each $u^i$,
	$$u^i(x_1,x')\geq u^i(2t_1-x_1,x')\ \text{and}\ \frac{\partial u^i}{\partial x_1}(x)<0\ \forall\ x\in \br^n\ \text{with}\ x_1<t_1.$$
	Furthermore, if $\frac{\partial u^i}{\partial x_1}(t_1,x')=0$ for some $x\in \{x_1=t_1\}$, with $t_1=0$ (or $t_1<0$), then such (or all) $u^i$ must be symmetric with respect to  $\{x_1=t_1\}$ and strictly decreasing in $x_1$ direction with $x_1<t_1$,
	that is,
	\begin{equation*}
	u^i(x)=u^i(|x_1-t_1|,x'),\ \forall\ x\in \br^n,
	\end{equation*}
	moreover, 
	\begin{equation*}
	\frac{\partial u^i}{\partial x_1}<0,\ \forall x\in\br^n\ \text{with}\ \ x_1<t_1.
	\end{equation*}
\end{thm}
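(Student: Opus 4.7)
The plan is to apply the moving plane method in the direction $\mathbf{e}_1$ to the coupled difference $\mathbf{U}_\lambda = (U^1_\lambda, \ldots, U^m_\lambda)$ on the half-space $\Sigma_\lambda = \{x_1 < \lambda\}$, running $\lambda$ through $(-\infty, 0]$. Condition \ref{Fanti} is designed so that for $\lambda \le 0$, at each $x \in \Sigma_\lambda$ where $\partial u^i/\partial x_1(x_\lambda) \ge 0$, the determinant inequality \eqref{eq:anti} holds, yielding the linearized cooperative elliptic system \eqref{eq:EIinf} for $\mathbf{U}_\lambda$. The cap $\lambda \le 0$ is thus intrinsic to \ref{Fanti} and is what forces the conclusion $t_1 \le 0$.

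\emph{Step 1 --- starting the procedure.} First I would show that for $\lambda$ sufficiently negative, $U^i_\lambda \le 0$ throughout $\Sigma_\lambda$ for every $i$. The asymptotic hypothesis \eqref{eq:inf2} yields this directly on the region where $|x|$ is large and $x_1 < \lambda \ll 0$, since then $u^i(x_\lambda) \le u^i(x)$ in the limit. For points $x \in \Sigma_\lambda$ with $|x|$ bounded but $\lambda \to -\infty$, the strict convexity of $u^i$ together with the radial-growth content of \eqref{eq:inf1} (which forces $u^i$ to blow up along $x_1 \to -\infty$) gives the same conclusion. This makes the set
$$\Lambda := \bigl\{\lambda \le 0 : U^i_\mu \le 0 \text{ in } \Sigma_\mu \text{ for all } i \text{ and all } \mu \le \lambda\bigr\}$$
non-empty.

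\emph{Step 2 --- the critical plane and monotonicity.} Set $t_1 := \sup \Lambda \le 0$. By continuity, $U^i_{t_1} \le 0$ in $\Sigma_{t_1}$ for every $i$, which is the comparison asserted in the theorem. For each $\mu \le t_1$, the sign structure of $\widetilde D$ recorded in \Cref{rmk:dijinf} makes the system \eqref{eq:EIinf} cooperative, so \Cref{lemmaA} combined with the strong maximum principle in \Cref{lem:SMP-HL} gives the dichotomy: either $U^i_\mu < 0$ strictly in $\Sigma_\mu$ for every $i$, or some component vanishes identically on $\Sigma_\mu$ (a possibility which, for $\mu < t_1$, is excluded by \eqref{eq:inf2} and the definition of $t_1$). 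In the generic case, Hopf's lemma at $x \in T_\mu$ yields $\partial U^i_\mu/\partial x_1(x) > 0$, which by \eqref{nablaonT} translates to $\partial u^i/\partial x_1(x) < 0$; letting $\mu$ sweep $(-\infty, t_1)$ exhausts $\{x_1 < t_1\}$, establishing strict monotonicity.

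\emph{Step 3 --- upgrading to symmetry at $t_1$.} If $\partial u^i/\partial x_1(t_1, x'_0) = 0$ for some $x'_0$, then by \eqref{nablaonT} $\partial U^i_{t_1}/\partial x_1(t_1, x'_0) = 0$. Since $U^i_{t_1}(t_1, x'_0) = 0$ and $U^i_{t_1} \le 0$ on $\Sigma_{t_1}$, the Hopf part of \Cref{lem:SMP-HL} forces $U^i_{t_1} \equiv 0$, i.e.\ symmetry of $u^i$ about $T_{t_1}$. When $t_1 < 0$, the strict off-diagonal negativity in \ref{Fzj} (together with \Cref{lemmaA}) propagates this vanishing from one component of $\mathbf{U}_{t_1}$ to all others, yielding simultaneous symmetry of every $u^j$; when $t_1 = 0$ one is already at the boundary of the $\lambda$-range allowed by \ref{Fanti}, and no such propagation is available, so only the individual $u^i$ verifying the hypothesis is guaranteed symmetric. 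The main obstacle, absent in the bounded-domain theory, is Step 1: the narrow-strip maximum principle is unavailable on the infinite-measure slab $\Sigma_\lambda$, and initiating the comparison genuinely requires jointly exploiting strict convexity and the asymptotic profile conditions \eqref{eq:inf1}--\eqref{eq:inf2}. A secondary technical point is maintaining the sign $\partial u^i/\partial x_1(x_\lambda) \ge 0$ demanded by \ref{Fanti} throughout the moving-plane sweep, which will be ensured inductively from the monotonicity already established on the reflected side at previous values of $\mu$.
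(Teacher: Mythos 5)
Your proposal captures the broad skeleton (move $T_\lambda$ from $-\infty$, cap at $\lambda\le 0$ because of \ref{Fanti}, exploit the cooperative structure, then use Hopf for monotonicity and symmetry), but there is a genuine and fatal gap in Step~1, and your later steps mischaracterize which hypotheses do the work.

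\textbf{The gap in Step~1.} You claim that \eqref{eq:inf1} and \eqref{eq:inf2} ``directly'' give $U^i_\lambda\le 0$ on $\Sigma_\lambda$ for $\lambda\ll 0$. They do not, and the paper's argument is structured precisely because they do not. Lemma~\ref{lemma:inf1} only produces the \emph{limit} statement $\lim_{|x|\to\infty,\,x_1<\lambda}U^i_\lambda(x)\le 0$; that is, the tail of $U^i_\lambda$ is nonpositive in the limiting sense. It says nothing pointwise, and it says nothing uniform in $\lambda$. Also note that for $x\in\Sigma_\lambda$ one automatically has $|x|>|\lambda|$, so there is no ``bounded $|x|$ vs.\ unbounded $|x|$'' dichotomy to split on; the whole slab recedes to infinity as $\lambda\to-\infty$, and that is exactly why the argument is delicate. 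The paper starts the plane by contradiction: if the comparison fails for every $\lambda<0$, then (by Lemma~\ref{lemma:inf1}, which guarantees the positive supremum is attained, not that it is $\le 0$) one gets interior maximum points $y_i$, and at these the $m$ linearized inequalities \eqref{eq:EIW} collapse to a linear system $MU=V$ with $v_i\le 0$, $m_{ij}\le 0$ off-diagonal. Then \ref{FH_i}, via Lemma~\ref{lemma:inf2}, gives positivity of the principal minors of $M$ far from the origin, and Lemma~\ref{lemmaA} together with Cramer's rule forces $U^j_\lambda(y_j)\le 0$, contradicting the choice of $y_j$. Your proposal never invokes \ref{FH_i} or Lemma~\ref{lemma:inf2} at all, yet that condition is what makes Step~1 (and the unbounded subcase of Step~2) go through; without it the moving plane cannot be started on an infinite-measure slab.

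\textbf{Two further mismatches.} First, the determination of the critical plane in the paper's Step~2 is a sequence argument: one takes $\lambda_k\downarrow\Lambda$ and maximizers $y_k$ with $U^i_{\lambda_k}(y_k)>0$, and splits into bounded $\{y_k\}$ (contradiction via the sign of $\partial u^i/\partial x_1$ on $T_\Lambda$ from Hopf) and $|y_k|\to\infty$ (same matrix contradiction as Step~1). Your Step~2 instead asserts that the ``$U^i_\mu\equiv 0$'' branch is ``excluded by \eqref{eq:inf2} and the definition of $t_1$'' for $\mu<t_1$, which is not a complete argument and does not replace the sequence analysis; in fact the paper shows that for $\Lambda<0$ the identity $U^i_\Lambda\equiv 0$ holds \emph{unconditionally} for some $i$, which is an output, not something to be excluded. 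Second, in Step~3 you attribute ``strict off-diagonal negativity'' to \ref{Fzj}, but \ref{Fzj} only says $f^i$ is non-increasing in $z^j$ ($\widetilde{d_{ij}}\le 0$), not strictly decreasing. The propagation of symmetry to all components requires the strict coupling condition \ref{Ffc} (or the invocation the paper makes in its Step~3), not \ref{Fzj} alone. As stated your Step~3 would fail in the decoupled case $\widetilde{d_{ij}}\equiv 0$ for $i\ne j$, which \ref{Fzj} permits.
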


\vskip 0.2in

If we assume more symmetry on ${\bf f}$ (substituting \ref{Fanti} with \ref{Fsym1}), we can furthermore immediately have the following, by using \cref{thm:whole} again with ${\bf u_{t_1}}:=(u^i_{t_1})$. (Note that in this case, the inequalities \eqref{eq:anti},\eqref{eq:splitinf} will be slightly different to obtain the same result.)

\begin{thm}
	Assume ${\bf f}$ satisfy \ref{F0}, \ref{Fziinf}, \ref{Fp}, \ref{Ffc}, \ref{Fsym1} and \ref{FH_i}. Let ${\bf u}=(u^1,\cdots,u^m)$ be a group of $[C^2(\br^n)]^m$ strictly convex solutions of \eqref{eq:whole} satisfying \eqref{eq:inf1} and \eqref{eq:inf2}, then there exists $t_1\in\br$ such that for each $u^i$ must be symmetric with respect to $\{x_1=t_1\}$ and strictly decreasing in $x_1$ direction with $x_1<t_1$. 
	
	More precisely, for all $i=1,\dots, m$, each $u^i$ must be like
	\begin{equation*}
	u^i(x)=u^i(|x_1-t_1|,x'),\ \forall\ x\in \br^n,
	\end{equation*}
	moreover, 
	\begin{equation*}
	\frac{\partial u^i}{\partial x_1}<0,\ \forall x\in\br^n\ \text{with}\ \ x_1<t_1.
	\end{equation*}
\end{thm}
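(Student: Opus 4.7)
The plan is to reduce the proof to \cref{thm:whole} by applying it twice: once to ${\bf u}$ and once to its $x_1$-reflection, then pinching the two critical levels to a common axis. First I would verify that under \ref{Fsym1}, the mirror-image tuple ${\bf\tilde u}(x_1,x'):=(u^i(-x_1,x'))_{1\le i\le m}$ is again a strictly convex solution of \eqref{eq:whole} satisfying \eqref{eq:inf1}--\eqref{eq:inf2}. This is mechanical: the Hessian is conjugated by $\operatorname{diag}(-1,1,\dots,1)$ (so determinants are preserved), the gradient becomes $\overline{\nabla u^i}$, and \ref{Fsym1} applied on both the $x_1$-argument and the $p_1$-argument absorbs the two sign changes on the right-hand side. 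This is exactly the content of the remark that the inequalities \eqref{eq:anti} and \eqref{eq:splitinf} ``become slightly different'' --- in the reflected problem, \eqref{eq:anti}, which in \cref{thm:whole} was an inequality supplied by \ref{Fanti}, is now an equality supplied by \ref{Fsym1}, and the derivation of the coupled elliptic inequality \eqref{eq:EIinf} from \eqref{eq:splitinf} proceeds verbatim on ${\bf\tilde u}$.

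Next, applying \cref{thm:whole} to both ${\bf u}$ and ${\bf\tilde u}$ yields some $t_1^{-}\le 0$ with $\partial_{x_1}u^i<0$ on $\Sigma_{t_1^{-}}$ and the reflection inequality $u^i(x_1,x')\ge u^i(2t_1^{-}-x_1,x')$ there, and some $\tilde t_1\le 0$ whose unfolded version gives $\partial_{x_1}u^i>0$ on $\{x_1>t_1^{+}\}$ and $u^i(x_1,x')\ge u^i(2t_1^{+}-x_1,x')$ there, with $t_1^{+}:=-\tilde t_1\ge 0$. Strict convexity forces $\partial_{x_1}u^i$ to be strictly increasing in $x_1$ on every horizontal line, so its unique zero must lie in $[t_1^{-},t_1^{+}]$. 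A pinching argument --- using the conditional symmetry clause of \cref{thm:whole}, which fires at each stopping level that sits strictly away from the axis of symmetry of $f$ and forces $u^i$ to be reflection-symmetric about it, together with the uniqueness of the axis of symmetry of a strictly convex function on a line --- forces $t_1^{-}=t_1^{+}=:t_1$ and collapses the two reflection inequalities to the equality $u^i(x_1,x')=u^i(2t_1-x_1,x')$, which is exactly the symmetry asserted.

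The main obstacle is the last step: ensuring that the axis $\{x_1=t_1\}$ is common to all $m$ components rather than drifting with $i$. This is where \ref{Ffc} becomes indispensable, since applying \cref{thm:whole} independently to each index only produces a possibly index-dependent level $t_1^{(i)}$. To identify these one feeds the positive-measure set $A_f^{i_0,j_0}$ and the strict sign of $\widetilde{d_{i_0 j_0}}$ from \ref{Ffc} into the coupled inequality \eqref{eq:EIinf} and invokes the strong maximum principle / Hopf lemma of \cref{lem:SMP-HL} on the non-trivial reflection differences $U^{i}_{t_1}$, to rule out any partition of the indices into subfamilies with distinct axes. Carrying this coupling argument cleanly in the unbounded setting --- where boundary data must be replaced by the tail hypotheses \eqref{eq:inf1}--\eqref{eq:inf2} to ``pin'' the reflection differences at infinity --- is the technical heart of the proof.
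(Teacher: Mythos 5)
Your proposal follows the route the paper sketches for this theorem: apply \cref{thm:whole} to $\mathbf u$ and to its $x_1$-reflection $\tilde{\mathbf u}(x):=(u^i(-x_1,x'))_i$, and then pinch the two stopping levels. The paper's one-line remark ``using \cref{thm:whole} again with $\mathbf u_{t_1}$'' only makes literal sense when $t_1=0$ (since $\mathbf u_{t_1}$ solves the same system only then), in which case $\mathbf u_{t_1}=\tilde{\mathbf u}$, so this is the same idea. Your verification that $\tilde{\mathbf u}$ solves \eqref{eq:whole} under \ref{Fsym1}, and your pinching via strict convexity, the conditional clause of \cref{thm:whole}, and \ref{Ffc}, correctly flesh out that sketch.

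There is, however, one claim that does not hold and that sits on the enabling step of the whole machinery. You write that, in the reflected problem, \eqref{eq:anti} ``is now an equality supplied by \ref{Fsym1}.'' Under \ref{Fsym1} one has, for $x\in\Sigma_\lambda$ with $\lambda<0$,
\begin{equation*}
  f^i(x_\lambda,\mathbf z,\bar p)=f^i\bigl(|2\lambda-x_1|,x',\mathbf z,|p_1|,p_2,\dots,p_n\bigr),
  \qquad
  f^i(x,\mathbf z,p)=f^i\bigl(|x_1|,x',\mathbf z,|p_1|,p_2,\dots,p_n\bigr),
\end{equation*}
and $|2\lambda-x_1|<|x_1|$ strictly whenever $x_1<\lambda<0$. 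These two values of $f^i$ agree only when $\lambda=0$; for $\lambda<0$, \ref{Fsym1} by itself says nothing about their ordering. So neither \eqref{eq:anti} (with the needed $\geq$) nor the elliptic inequality \eqref{eq:EIW} follows for $\lambda<0$ without retaining \ref{Fanti} (or, equivalently, adding monotonicity of $f^i$ in $|x_1|$), and Step~1 of the proof of \cref{thm:whole} applied to $\mathbf u$ (or to $\tilde{\mathbf u}$) genuinely needs this. The paper's parenthetical (``the inequalities \eqref{eq:anti}, \eqref{eq:splitinf} will be slightly different'') flags but does not close this gap, and by asserting an equality you mask rather than address it. If you keep \ref{Fanti} alongside \ref{Fsym1} (which is almost surely the intended hypothesis), or otherwise justify that under \eqref{eq:inf2} and \ref{Fsym1} the moving plane must stop at $\lambda=0$, the rest of your pinching argument goes through.
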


\vskip 0.2in

Especially, if we substitute \ref{Fanti} to the symmetric one \ref{Fsymall}, then by using \cref{thm:whole} with respect to all directions in $\br^n$, we have:

\begin{coro}\label{thm:wholefinal}
	Assume ${\bf f}$ satisfy \ref{F0}, \ref{Fziinf}, \ref{Fp}, \ref{Ffc}, \ref{Fsymall} and \ref{FH_i}. Let ${\bf u}=(u^1,\cdots,u^m)$ be a group of $[C^2(\br^n)]^m$ strictly convex solutions of \eqref{eq:whole} satisfying \eqref{eq:inf1} and \eqref{eq:inf2}, then each $u^i$ must be radially symmetric and strictly increasing respect to the some point in $\br^n$. 
	
	More precisely, denote the rotating center as $x^*\in\br^n$, and $r=|x-x^*|$, then for $i=1\dots,m$, each $u^i$ must be like
	\begin{equation*}
	u^i(x)=u^i(r),\ \forall\ x\in\br^n,
	\end{equation*}
	moreover,
	\begin{equation*}
	\frac{d u^i}{dr}(x)>0,\ \forall\ x\in\br^n,
	\end{equation*}
\end{coro}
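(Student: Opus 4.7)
The plan is to reduce \cref{thm:wholefinal} to the one-direction symmetry theorem that immediately precedes it, applied to every unit direction $\nu \in S^{n-1}$ rather than only $e_1$. Since \ref{Fsymall} asserts invariance of each $f^i$ under the simultaneous action of $O(n)$ on $x$ and $p$, rotating coordinates so that $\nu$ is sent to $e_1$ reduces \ref{Fsymall} to \ref{Fsym1}, so all hypotheses of the intermediate theorem are available in every direction. First I would invoke it to produce, for each $\nu$, a real $t(\nu)$ such that every $u^i$ is symmetric about the hyperplane $H_\nu := \{x \in \br^n : x \cdot \nu = t(\nu)\}$ and strictly monotone along $\nu$ on the half-space $\{x \cdot \nu < t(\nu)\}$.

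Second, I need the centre of symmetry to coincide across components, which is what the strong coupling hypothesis \ref{Ffc} is designed to supply. The coupled elliptic inequality \eqref{eq:EIinf} for the differences $U^i_\lambda$ forces their signs to be synchronised through the off-diagonal entries of $\widetilde{D}$: should two components $u^{i_0}, u^{j_0}$ have different symmetry centres $t_{i_0}(\nu) \neq t_{j_0}(\nu)$, then on the strip bounded by the two hyperplanes $U^{i_0}_\lambda$ and $U^{j_0}_\lambda$ at a common $\lambda$ would disagree in sign, contradicting \cref{lem:SMP-HL} applied in conjunction with the strict negativity guaranteed by \ref{Ffc} on the positive-measure set $A_f^{i_0,j_0}$. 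Thus for each $\nu$ a single $t(\nu)$ works simultaneously for every $u^i$.

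Third, with common symmetry planes $H_\nu$ in every direction, I would argue that they all pass through a unique point $x^* \in \br^n$. Two parallel hyperplanes of symmetry would force $u^i$ to be periodic in the perpendicular direction, clashing with $\lim_{|x|\to\infty} u^i = \infty$; likewise, composing reflections through two non-concurrent hyperplanes yields a translation symmetry with the same obstruction. Once every $H_\nu$ passes through the same $x^*$, composing reflections through two distinct $H_{\nu_1}, H_{\nu_2}$ gives a rotation about the codimension-$2$ axis through $x^*$, and varying $\nu_1, \nu_2$ sweeps out the full stabiliser of $x^*$ in $O(n)$, forcing $u^i(x) = u^i(|x - x^*|)$. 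The strict radial monotonicity $\tfrac{d u^i}{dr} > 0$ is inherited from the directional strict monotonicity in each $\nu$.

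The main obstacle is the synchronisation step combined with the intersection argument: aligning the component-wise symmetry centres across $i$ requires the full strength of \ref{Ffc} to propagate information through the off-diagonal entries of $\widetilde{D}$, while excluding parallel or skew configurations of the $H_\nu$'s relies crucially on the blow-up condition $\lim_{|x| \to \infty} u^i(x) = \infty$ together with the asymptotic controls \eqref{eq:inf1} and \eqref{eq:inf2}, which prevent $u^i$ from harbouring any hidden translation invariance. Once these two hurdles are cleared, the passage from reflection-in-every-direction to rotational invariance about $x^*$ is a standard group-theoretic consequence.
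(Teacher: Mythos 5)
Your proposal takes the same route the paper gestures at: the paper "proves" \cref{thm:wholefinal} in one line ("by using \cref{thm:whole} with respect to all directions"), so what you supply is the standard geometric glue that the paper leaves implicit — applying the one-direction theorem (in the form of the unnamed theorem preceding the corollary, which already delivers a \emph{common} symmetry plane $\{x\cdot\nu=t(\nu)\}$ for all $i$ thanks to \ref{Ffc}) to every $\nu$, and then arguing the family of hyperplanes concur. That architecture is correct, and you are right that \eqref{eq:inf1}, \eqref{eq:inf2} (being direction-free / holding for all $\nu$ as written) and the reduction \ref{Fsymall}$\Rightarrow$\ref{Fsym1}-in-the-rotated-frame make the hypotheses available in every direction.

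One wrinkle in your Step 3 should be tightened: the assertion that "composing reflections through two non-concurrent hyperplanes yields a translation" is not literally true. Any two non-parallel hyperplanes in $\br^n$ intersect, and the composition of their reflections is a rotation about their codimension-$2$ intersection, never a translation. The correct mechanism is: the $n$ coordinate-direction symmetry planes meet at $x^*$, and their reflections compose to the \emph{point reflection} $P(x)=2x^*-x$, under which each $u^i$ is invariant. For an arbitrary direction $\nu$ with symmetry plane $T_\nu=\{x\cdot\nu=t(\nu)\}$, a short computation shows $(P\circ R_\nu)^2$ is translation by $4\bigl(x^*\cdot\nu - t(\nu)\bigr)\nu$; invariance of $u^i$ under this translation, combined with $\lim_{|x|\to\infty}u^i=\infty$, forces $x^*\cdot\nu=t(\nu)$, i.e.\ $x^*\in T_\nu$. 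With that repair your argument is complete; the final passage from reflection-invariance through all hyperplanes containing $x^*$ to $O(n)$-invariance, and the derivation of $\frac{du^i}{dr}>0$ from the directional strict monotonicity, are as you state.
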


\vskip 0.2in
\subsection{Proof of \cref{thm:whole}}

We are now in a position to prove the theorem.

By the same procedure in calculating \eqref{eq:EIinf}, we can obtain an elliptic inequality of $U^i_\lambda$ in $\Sigma_\lambda$:
$\forall x\in\Sigma_\lambda$ such that $\frac{\partial u^i}{\partial x_1}(x)\leq0$, we have
\begin{equation}\label{eq:EIW}
\begin{split}
\operatorname{tr}\left({\bf A^i}(x)D^2U^i_\lambda(x)\right)+&{\bf B^i}(x)\cdot\nabla U^i_\lambda(x)\\
\geq&\sum_{j=1}^m \widetilde{d_{ij}}(x,u^1_\lambda,\cdots,u^j,\cdots,u^m,\nabla U^i_\lambda,U^j_\lambda)U^j_\lambda,
\end{split}
\end{equation}
where ${\bf A^i}(x):=(a^i_{jk}(x))_{j,k=1}^n$ with
\begin{equation*}
a^i_{jk}(x):=\int_0^1\det\left((1-t)D^2u^i_\lambda(x)+tD^2u^i(x)\right)\left((1-t)D^2u^i_\lambda(x)+tD^2u^i(x)\right)^{jk}dt,
\end{equation*}
are strictly positive definite due to the strictly convexity of $u^i$ and \ref{F0}, and together with $${\bf B^i(x)}:=\frac{h_{f^i,p}}{|\nabla U^i_\lambda(x))|}\chi_{\{|\nabla U^i_\lambda(x))|\neq0\}}\nabla U^i_\lambda(x)$$ 
are all locally bounded due to the twice differentiable continuity of $u^i$.

\vskip 0.2in

We firstly prove some lemmas.

We prove the same strong maximum principle as in \cite{zhang2024}.
\begin{lemma}\label{lemma:SMPFinf}
	Assume ${\bf f}$ satisfy \ref{F0}, \ref{Fziinf}, \ref{Fzj}, \ref{Fp}, \ref{Fanti}, let ${\bf u}$ be $[C^2(\Omega)]^m$ strictly convex solutions to \eqref{eq:whole}, if $$U^i_\lambda\leq0, \frac{\partial u^i}{\partial x_1}\leq0,\ \forall x\in\Sigma_\lambda,\ \forall i=1,\dots,m,$$ 
	then either $$U^i_\lambda<0,\ \forall x\in\Sigma_\lambda,\ \forall \ i=1,\dots,m,$$
	or $$U^i_\lambda\equiv0,\ \forall x\in\Sigma_\lambda,\ \forall \ i=1,\dots,m.$$
\end{lemma}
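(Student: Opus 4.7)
The starting point is the coupled elliptic inequality \eqref{eq:EIinf} satisfied by each $U^i_\lambda$ on $\Sigma_\lambda$, together with the two sign observations from \cref{rmk:dijinf}: $\widetilde{d_{ij}}\le 0$ for $j\neq i$ and $\widetilde{d_{ii}}\ge 0$. The overall strategy is to reduce this system to $m$ decoupled scalar inequalities so that \cref{lem:SMP-HL} can be invoked one index at a time, and then to use the cooperative structure to transfer the conclusion across indices.

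Fix $i\in\{1,\dots,m\}$. Since, by hypothesis, $U^j_\lambda\le 0$ throughout $\Sigma_\lambda$ and $\widetilde{d_{ij}}\le 0$ for $j\neq i$, each off-diagonal product $\widetilde{d_{ij}}\,U^j_\lambda$ is non-negative. Dropping these non-negative terms from the right of \eqref{eq:EIinf} and moving the diagonal term to the left gives the scalar inequality
$$
\operatorname{tr}\!\bigl({\bf A^i}(x)\,D^2U^i_\lambda(x)\bigr)+{\bf B^i}(x)\cdot\nabla U^i_\lambda(x)-\widetilde{d_{ii}}(x,\ldots)\,U^i_\lambda(x)\ \ge\ 0.
$$
Here ${\bf A^i}$ is locally positive definite because of the strict convexity of $u^i$ together with \ref{F0}, while ${\bf B^i}$ and $\widetilde{d_{ii}}$ are in $L^\infty_{\operatorname{loc}}(\Sigma_\lambda)$ thanks to $u^i\in C^2$ and \ref{Fp}. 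Applying \cref{lem:SMP-HL} to the non-positive function $U^i_\lambda$ yields the per-index dichotomy: for every $i$, either $U^i_\lambda<0$ throughout $\Sigma_\lambda$ or $U^i_\lambda\equiv 0$ there.

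To promote this to the simultaneous alternative stated in the lemma, I would argue by contradiction. Suppose $U^{i_0}_\lambda\equiv 0$ in $\Sigma_\lambda$ for some index $i_0$. Substituting back into \eqref{eq:EIinf} at index $i_0$ makes its left-hand side vanish identically; the right-hand side then becomes a sum of non-negative terms $\widetilde{d_{i_0 j}}\,U^j_\lambda$ over $j\neq i_0$, whose total is $\le 0$, forcing each term to vanish pointwise. Combined with \ref{Fzj} and the cooperative coupling between the equations, one then iterates: for any $j$ linked to $i_0$ through the coupling, the relation $\widetilde{d_{i_0 j}}\,U^j_\lambda\equiv 0$ together with the per-index dichotomy for $U^j_\lambda$ rules out the strict alternative, so $U^j_\lambda\equiv 0$ as well, and the conclusion propagates.

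The delicate point I expect to be the main obstacle is this last propagation step: \ref{Fzj} only gives the non-strict sign $\widetilde{d_{i_0 j}}\le 0$, so $\widetilde{d_{i_0 j}}\,U^j_\lambda\equiv 0$ does not by itself force $U^j_\lambda\equiv 0$ on the zero set of $\widetilde{d_{i_0 j}}$. The argument must exploit the fact that the per-index dichotomy leaves only two mutually exclusive global options for each component, and use the connectedness of $\Sigma_\lambda$ together with the coupling to rule out the mixed case. Care will also be needed to verify that the coefficients produced by the integral mean-value representation in \eqref{eq:aij} and by \eqref{eq:bi} remain locally bounded near points where $|\nabla U^i_\lambda|=0$, which is the reason for the indicator-function truncation in \eqref{eq:bi}.
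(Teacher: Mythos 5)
The first half of your argument --- decoupling the system into scalar inequalities and applying \cref{lem:SMP-HL} index by index --- is correct and is exactly the reduction the paper performs inside the proof of \cref{thm:whole} when it passes from \eqref{eq:EIW} to \eqref{eq:EIWF}: since $\widetilde{d_{ij}}\le 0$ for $j\ne i$ and $U^j_\lambda\le 0$, each off-diagonal contribution $\widetilde{d_{ij}}\,U^j_\lambda$ is non-negative and may be discarded, yielding for each fixed $i$ a scalar inequality with locally bounded coefficients (the truncation in \eqref{eq:bi} already makes $|{\bf B^i}|\le h_{f^i,p}$ everywhere, so boundedness near zeros of $\nabla U^i_\lambda$ is not an issue). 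This gives, for each $i$ separately, the dichotomy $U^i_\lambda<0$ on $\Sigma_\lambda$ or $U^i_\lambda\equiv 0$ on $\Sigma_\lambda$.

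The gap you flag in promoting this to the simultaneous alternative is not a delicate technicality to be patched later --- it is fatal under the stated hypotheses, and the repair you sketch does not work. Condition \ref{Fzj} only gives $\widetilde{d_{i_0 j}}\le 0$, and if $f^{i_0}$ happens to be independent of $z^{j}$ then $\widetilde{d_{i_0 j}}\equiv 0$, so the identity $\widetilde{d_{i_0 j}}\,U^j_\lambda\equiv 0$ obtained after substituting $U^{i_0}_\lambda\equiv 0$ carries no information about $U^j_\lambda$. In a fully decoupled system (each $f^i$ a function of $x,z^i,p$ only), all five hypotheses \ref{F0}, \ref{Fziinf}, \ref{Fzj}, \ref{Fp}, \ref{Fanti} can hold while one component satisfies $U^{i_0}_\lambda\equiv 0$ and another satisfies $U^j_\lambda<0$; connectedness of $\Sigma_\lambda$ only serves to make each \emph{per-index} alternative global and does nothing to transmit information between different indices. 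Consistently with this, the proof of \cref{thm:whole} invokes \cref{lemma:SMPFinf} only for the per-index dichotomy (to conclude $U^{j_0}_\Lambda<0$ when $j_0\in J$), and the step that forces all indices to agree is carried out separately in Step~3 of that proof using the strong coupling hypothesis \ref{Ffc}, which does not appear among the hypotheses of \cref{lemma:SMPFinf}. So your cross-index propagation cannot be completed from the stated hypotheses; one must either add an irreducibility/strong-coupling assumption of the type \ref{Ffc}, or read the conclusion of the lemma as holding index by index, which is all that the subsequent argument actually uses.
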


Next, we describe the behaviors of $U^i_\lambda$ at the infinity
\begin{lemma}\label{lemma:inf1}
	Let ${\bf u}=(u^1,\cdots,u^m)$ satisfy \eqref{eq:inf1} and \eqref{eq:inf2}, then for all $\lambda<0$, 
	$$\lim\limits_{|x|\to\infty,x_1<\lambda}U^i_\lambda(x)\leq0, \forall i=1,\dots,m.$$ 
\end{lemma}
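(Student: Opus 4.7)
The plan is to use convexity of $u^i$ along the $e_1$-direction to reduce the claim to the single asymptotic hypothesis \eqref{eq:inf2}, and then to handle the remaining transverse regime via \eqref{eq:inf1}. The starting observation is that for $\lambda<0$ and $x_1<\lambda$ the reflected coordinate satisfies $x_1<2\lambda-x_1<-x_1$. Setting $\phi(t):=u^i(t,x')$, which is convex in $t$ because $u^i$ is convex in $x$, the one-dimensional interpolation inequality on the interval $[x_1,-x_1]$ yields $\phi(2\lambda-x_1)\leq\max(\phi(x_1),\phi(-x_1))$, so that
\begin{equation*}
    U^i_\lambda(x)\;=\;\phi(2\lambda-x_1)-\phi(x_1)\;\leq\;\max\bigl(0,\,u^i(-x_1,x')-u^i(x_1,x')\bigr).
\end{equation*}
Since $(-x_1,x')=x_{0,e_1}$ in the notation of \eqref{eq:inf2}, the claim reduces to showing
\begin{equation*}
    \limsup_{|x|\to\infty,\,x_1<\lambda}\bigl[u^i(x_{0,e_1})-u^i(x)\bigr]\;\leq\;0.
\end{equation*}

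I then treat the limit in two regimes according to whether $x_1$ remains bounded. If $x_1\to-\infty$ along a subsequence, the bound is exactly hypothesis \eqref{eq:inf2} applied with $\nu=e_1$. If instead $x_1$ stays bounded and $|x'|\to\infty$, the two points $x$ and $x_{0,e_1}$ lie on the same sphere of radius $\sqrt{x_1^2+|x'|^2}$; writing
\begin{equation*}
    u^i(x_{0,e_1})-u^i(x)\;=\;\int_{x_1}^{-x_1}\frac{\partial u^i}{\partial x_1}(t,x')\,dt
\end{equation*}
and decomposing the integrand via \eqref{eq:inf1} into its radial part $\frac{\partial u^i}{\partial r}(t,x')\,\frac{t}{\sqrt{t^2+|x'|^2}}$ and tangential part $(\nabla_\tau u^i)_1(t,x')$, the radial factor $t/\sqrt{t^2+|x'|^2}$ is odd in $t$ on the symmetric interval $[x_1,-x_1]$ and admits the antiderivative $\sqrt{t^2+|x'|^2}$, while the tangential term is $o(\frac{\partial u^i}{\partial r})$; integration against the bounded weight produced by $\frac{\partial u^i}{\partial r}$ on the bounded interval then yields a vanishing contribution in the limit.

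The main obstacle I expect is this second regime: since \eqref{eq:inf1} only controls $|\nabla_\tau u^i|$ relatively to a possibly unbounded $\frac{\partial u^i}{\partial r}$, crude pointwise bounds of the integrand are not sufficient. The argument must exploit the exact cancellation $\int_{x_1}^{-x_1}t/\sqrt{t^2+|x'|^2}\,dt=0$ coming from the symmetry of $[x_1,-x_1]$ about the origin, and then control the remaining error using the $C^2$ regularity of $u^i$ together with the asymptotic radiality encoded in \eqref{eq:inf1}; the first regime, by contrast, is an immediate application of the authors' asymptotic hypothesis \eqref{eq:inf2}.
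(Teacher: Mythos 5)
Your convexity step is a genuine and appealing shortcut: since $2\lambda - x_1 \in (x_1,-x_1)$ when $\lambda<0$ and $x_1<\lambda$, the one-dimensional convexity bound $U^i_\lambda(x)\leq\max\bigl(0,\,u^i(-x_1,x')-u^i(x_1,x')\bigr)$ is correct and cleanly reduces the lemma to showing $\limsup\bigl[u^i(x_{0,e_1})-u^i(x)\bigr]\leq0$, which the paper reaches instead by the more laborious mean-value decomposition of $U^i_\lambda$. The regime $x_1\to-\infty$ is then an honest application of \eqref{eq:inf2}, exactly as in the paper.

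The second regime, however, has a real gap, and it is the one you yourself flag. Your cancellation argument uses that $\int_{x_1}^{-x_1}\frac{t}{\sqrt{t^2+|x'|^2}}\,dt=0$, but the actual radial contribution to $u^i(-x_1,x')-u^i(x_1,x')$ is
\[
\int_{x_1}^{-x_1}\frac{\partial u^i}{\partial r}(t,x')\,\frac{t}{\sqrt{t^2+|x'|^2}}\,dt,
\]
and the weight $\frac{\partial u^i}{\partial r}(t,x')$ is neither constant nor even in $t$, so the oddness of $t/\sqrt{t^2+|x'|^2}$ does not annihilate the integral. The phrase ``integration against the bounded weight produced by $\frac{\partial u^i}{\partial r}$'' is also not correct as stated: \eqref{eq:inf1} bounds the \emph{ratio} $|\nabla_\tau|/\frac{\partial u^i}{\partial r}$, not $\frac{\partial u^i}{\partial r}$ itself, which in this convex, coercive setting will typically be unbounded as $|x'|\to\infty$, so the tangential error $\frac{\partial u^i}{\partial r}\cdot o(1)$ integrated over the bounded $t$-interval need not vanish. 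In the paper this regime is treated differently: the interval $\{x_1<\lambda\}$ is split into $\{x_1<2\lambda\}$ and $\{2\lambda<x_1<\lambda\}$ precisely so that, after writing each one-variable increment as $\int_0^1\frac{\partial u^i}{\partial x_1}(\cdot)\,dt$ and substituting $\frac{\partial u^i}{\partial x_1}=\frac{\partial u^i}{\partial r}\bigl(\frac{x_1}{|x|}+o(1)\bigr)$, the first coordinate of every evaluation point has a \emph{definite sign} on each piece, and the sign of the whole integral follows from $\frac{\partial u^i}{\partial r}>0$; no cancellation is invoked. To complete your proof you would need to replace the cancellation heuristic with a sign argument of that type (or otherwise control the $t$-variation of $\frac{\partial u^i}{\partial r}$ relative to its magnitude), since as written the second regime is not established.
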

\begin{proof}
	By \eqref{eq:inf1},
	\begin{align*}
	\frac{\partial u^i}{\partial x_1}&=\frac{\partial u^i}{\partial r}\frac{x_1}{|x|}+\left(\nabla u^i-\frac{\partial u^i}{\partial r}\frac{x}{|x|}\right)\cdot{\bf e_1}\\
	&=\frac{\partial u^i}{\partial r}\left(\frac{x_1}{|x|}+o(1)\right),\ \text{as}\ |x|\to\infty,\ \forall\ i=1,\dots,m,
	\end{align*}
	Hence for all $\lambda<0$,
	\begin{align*} U^i_\lambda(x)&=u^i(2\lambda-x_1,x')-u^i(x)\\
	&=u^i(2\lambda-x_1,x')-u^i(-x_1,x')+u^i(-x_1,x')-u^i(x)\\
	&=2\lambda\int_0^1\frac{\partial u^i}{\partial x_1}\left(2(1-t)\lambda-x_1,x'\right)dt+u^i(-x_1,x')-u^i(x)\\
	&=2\lambda\int_0^1\frac{\partial u^i}{\partial r}\left(\frac{2(1-t)\lambda-x_1}{|x|}+o(1)\right)dt+u^i(-x_1,x')-u^i(x),\ \text{as}\ |x|\to\infty,
	\end{align*}
	Noticing that $\frac{\partial u^i}{\partial r}>0$, as $|x|\to\infty$, hence for $x_1$ sufficiently small such that $x_1<2\lambda$, then by \eqref{eq:inf2}, $$\lim\limits_{|x|\to\infty,x_1<2\lambda}U^i_\lambda(x)\leq0.$$ 
	On the other hand, for $\lambda<0$,
	\begin{align*} U^i_\lambda(x)&=u^i(2\lambda-x_1,x')-u^i(x_1,x')\\
		&=2(\lambda-x_1)\int_0^1\frac{\partial u^i}{\partial x_1}\left(2t(\lambda-x_1)+x_1,x'\right)dt\\
		&=2(\lambda-x_1)\int_0^1\frac{\partial u^i}{\partial r}\left(\frac{2t(\lambda-x_1)+x_1}{|x|}+o(1)\right)dt,\ \text{as}\ |x|\to\infty,
	\end{align*}
	Noticing that $\frac{\partial u^i}{\partial r}>0$,as $|x|\to\infty$, hence for $x_1$ sufficiently small such that $2\lambda<x_1<\lambda$, we have
	$$\lim\limits_{|x|\to\infty,2\lambda<x_1<\lambda}U^i_\lambda(x)\leq0.$$ 
\end{proof}

\begin{rmk}
This lemma is in order to ensure the super-mum of $U_\lambda^i$ on $\Sigma_\lambda$ can be achieved inside $\Sigma_\lambda$, not just a sequence of maximizers. This version is the axis symmetric case, in fact we can weaken the condition \eqref{eq:inf1} and \eqref{eq:inf2} if we only consider one direction.
\end{rmk}

\begin{lemma}\label{lemma:inf2}
	Let ${\bf u}=(u^1,\cdots,u^m)$ be a group of strictly convex solutions to \eqref{eq:MA}, ${\bf f}$ satisfy \ref{FH_i}, then exists $\widetilde{R}>0$ such that for all $\lambda\in\br$, $|x|>\widetilde{R}$, $i=1,\dots,n$,
	\begin{align*}
	\widetilde{D_i}(x,z^1,\cdots,z^m,0,h)>0, \forall h\in\br
	\end{align*}
	where $z^k$ lies in the segment between $u^k_\lambda(x)$ and $u^k(x)$, $k=1,\dots,m$.
\end{lemma}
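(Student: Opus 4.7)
My plan is to reduce the claim to a direct application of hypothesis \ref{FH_i}. Setting $c_0:=\widetilde{D_h^i}/2>0$, that hypothesis furnishes $\delta>0$ for which $\inf_{h\in\br}\widetilde{D_i}(x,z^1,\ldots,z^m,0,h)\geq c_0$ whenever $|x|^{-1}+\sum_{j=1}^m|z^j|^{-1}<\delta$. Hence it will suffice to exhibit $\widetilde{R}>0$ such that $|x|>\widetilde{R}$ forces both $|x|^{-1}$ and each $|z^k|^{-1}$ to be smaller than $\delta/(m+1)$, uniformly in $\lambda\in\br$ and in any $z^k$ on the segment joining $u^k_\lambda(x)=u^k(x_\lambda)$ with $u^k(x)$.

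First I would exploit the strict convexity of each $u^k$ together with $\lim_{|y|\to\infty}u^k(y)=+\infty$ to derive a uniform growth bound. These two properties force $u^k$ to attain a finite global minimum, and restricting to rays from that minimum and using compactness of $S^{n-1}$ yields constants $c_k>0$ and $C_k$ with $u^k(y)\geq c_k|y|-C_k$ for every $y\in\br^n$. Since every $z^k$ on the indicated segment satisfies $z^k\geq\min(u^k(x),u^k(x_\lambda))$, the $|z^k|^{-1}$ terms can be made as small as desired whenever both $|x|$ and $|x_\lambda|$ are sufficiently large, and in that regime the claim follows at once from the reduction above.

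The main obstacle will be uniformity in $\lambda$: a priori the reflection $x_\lambda=(2\lambda-x_1,x')$ can be pushed near the minimizer of $u^k$ by choosing $\lambda\approx x_1/2$, so $u^k(x_\lambda)$ need not be large even when $|x|$ is. I would handle this by splitting into two sub-cases. When $|x'|$ is large, $|x_\lambda|\geq|x'|$ is automatically large and the previous step closes. When $|x'|$ stays bounded while $|x_1|\to\infty$, the segment parametrization $z^k=tu^k(x_\lambda)+(1-t)u^k(x)$ with $t\in[0,1]$ allows me to separate: for $t$ bounded away from $1$ the term $(1-t)u^k(x)$ dominates and is large, while for $t$ close to $1$ I would invoke the moving-plane setting in which the lemma is actually applied, where $\lambda$ is a priori bounded above by the critical reflection parameter, so that $|2\lambda-x_1|\to\infty$ and $u^k(x_\lambda)$ is again large. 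Taking $\widetilde{R}$ to be the maximum of the thresholds arising from both sub-cases then produces the required uniform constant.
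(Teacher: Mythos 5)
The paper's own proof is a single sentence: noting $\lim_{|x|\to\infty}u^i(x)=\infty$, and then by \ref{FH_i}. Your more careful analysis exposes a genuine gap that this terse argument glosses over, namely that the reflected point $x_\lambda$ can sit near the minimizer of ${\bf u}$ while $|x|$ is large, so that part of the segment stays bounded and \ref{FH_i} does not directly apply. That concern is correct, and your linear lower bound $u^k(y)\geq c_k|y|-C_k$ from strict convexity plus coercivity is the right tool. However, your resolution of the bad sub-case (for $t$ near $1$, ``invoke the moving-plane setting in which $\lambda$ is a priori bounded above'') is not available where the lemma is first used: in Step~1 of each main proof $\lambda$ is taken arbitrarily negative (indeed $\lambda<-\widetilde{R}$, with $\widetilde{R}$ coming from this very lemma), and for such $\lambda$ one may have $x_1\approx 2\lambda$ with $|x'|$ small, so that $|x|>\widetilde{R}$ while $|x_\lambda|\approx|x'|$ is small. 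The information that actually closes the gap is the sign condition present at every application point: the lemma is only invoked at maximizers $x$ with $U^{i}_\lambda(x)>0$, i.e.\ $u^{i}(x_\lambda)>u^{i}(x)$. Combined with your growth bound and $|x|>\widetilde{R}$, this forces $u^{i}(x_\lambda)$ to be large, hence $|x_\lambda|$ to be large, hence every $u^k(x_\lambda)$ and every $z^k$ on the segment to be large, after which your reduction to \ref{FH_i} goes through without any case split. In short, the lemma as literally stated (for \emph{all} $\lambda\in\br$) does not follow from the hypotheses given; it should either carry the hypothesis $U^{i}_\lambda(x)>0$ explicitly or be reformulated at the point of use, and your argument should use that sign condition rather than an a priori bound on $\lambda$.
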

\begin{proof}
	Noting that $\lim\limits_{|x|\to\infty}u^i(x)\ =\infty$, and then by \ref{FH_i}.
\end{proof}

\vskip 0.2in

We are now on the position of proving the main theorem.

\newenvironment{prf4}{{\noindent\bf Proof of \cref{thm:whole}.}}{\hfill $\square$\par}
\begin{prf4}
	We begin to move the hyperplane parallel to $T_\lambda$ coming from the $-\infty$.

 {\bf Step 1}: There exists a real number $\lambda\in\br$, such that $\left.U^i_\mu\right|_{\Sigma_\mu}\leq0, \forall\ i=1,\dots,m, \forall\ \mu<\lambda, $.
	
	If not, suppose that $\forall \lambda<0$, exists $y_0\in\Sigma_\lambda$ such that for some $i_0$, $U^{i_0}_\lambda(y_0)>0$. Denote index set $J:=\{j=1,\dots,m\ |\ U^j_\lambda(x)\leq0,\ \forall x\in\Sigma_\lambda\}\subsetneqq\{1,\dots,m\}$, and $I:=\{1,\dots,m\}\setminus J.$ Without lost of generality, we can assume $I=\{1,\dots,l\},\ J=\{l+1,\dots,m\},\ l=|I|$. 
	
	Noting that $\sup\limits_{\Sigma_\lambda}U^i_\lambda>0$, $\forall\ i\in I$. By \cref{lemma:inf1}, we can choose $\{y_i\}_{i\in I}\subset \overline{\Sigma_\lambda}$ such that
	\begin{equation*}
	U^i_\lambda(y_i)=\max_{y\in\overline{\Sigma_\lambda}}U^i_\lambda(y)>0.
	\end{equation*}
	For fixed $i\in I$, $\left.U^i_\lambda\right|_{T_{\lambda}}=0$ shows that $y_i\in\Sigma_\lambda$, hence $\nabla U^i_\lambda(y_i)=0$, and $D^2 U^i_\lambda(y_i)\leq 0$. Then the $i$-th equation in \eqref{eq:EIW} at $y_i$ transforms to
	\begin{align*}
	0\geq\sum_{j=1}^m \widetilde{d_{ij}}(y_i,u^1_\lambda(y_i),\cdots,u^j(y_i),\cdots,u^m(y_i),0,U^j_\lambda(y_i))U^j_\lambda(y_i),
	\stepcounter{equation}\tag{\theequation}\label{eq1}
	\end{align*}
	By \cref{rmk:dijinf}, definition of $J$ and $U^j_\lambda(y_i)\leq U^j_\lambda(y_j), \forall j\in I$, \eqref{eq1} becomes
	\begin{align*}
	0\geq\sum_{j\in I} \widetilde{d_{ij}}(y_i,u^1_\lambda,\cdots,u^j,\cdots,u^m,0,U^j_\lambda(y_i))U^j_\lambda(y_j).
	\stepcounter{equation}\tag{\theequation}\label{eq2}
	\end{align*}
	
	We will prove that $U^j_\lambda(y_j)\leq0$, $\forall j\in I$, which will be contradictory to the choice of $y_i$. Rewrite \eqref{eq2} as
	\begin{align*}
	MU=V,
	\stepcounter{equation}\tag{\theequation}\label{eq3}
	\end{align*}
	where $U:=\left(U^1_\lambda(y_1),\dots,U^l_\lambda(y_l)\right)$, $V:=(v_1,\dots,v_l)$, $M:=(m_{ij})_{i,j=1}^l$, satisfying $$v_i\leq0, m_{ij}:=\widetilde{d_{ij}}(y_i,u^1_\lambda,\cdots,u^j,\cdots,u^m,0,U^j_\lambda(y_i))\leq0,\ i,j=1,\dots,l,\ i\neq j.$$ 
	We can assume $\lambda<-\widetilde{R}$, where $\widetilde{R}$ defined in \cref{lemma:inf2}, then $\Sigma_\lambda\subset\br^n\setminus B_{\widetilde{R}}(0)$, hence by \cref{lemma:inf2} and \cref{lemmaA}, $M$ is invertable, hence by \eqref{eq3} we can solve $U=M^{-1}V$, by Cramer's law and \cref{lemmaA}, together with \cref{lemma:inf2}, we have
	$$U^j_\lambda(y_j)=M^{jk}v_k=\frac1{\det M}\sum_{k=1}^l\operatorname{adj}(M)_{jk}v_k\leq0,\ \forall j=1,\dots,l.$$\\
	
	Next we continue to move $T_\lambda$ by increasing $\lambda$, as long as $U^i_\lambda(x)\leq 0$ holds for all $i$ on $\Sigma_{\Lambda}$, we define
	\begin{equation*}
		\Lambda:=\sup\mleft\{\lambda\in\br\ \middle|\ U^i_\mu(x)\leq 0,\ \forall x\in\Sigma_\mu,\ \forall\ i=1,\dots,m,\ \forall\ \mu<\lambda\mright\}.
	\end{equation*}
	Step 1 shows that $\Lambda>-\infty$. On the other hand $\lim\limits_{|x|\to +\infty}u^i(x)=\infty$, for sufficiently large $R$, we have $u^i(x)>u^i(0)$, $\forall|x|>R$, hence $\Lambda<+\infty$, therefore $\Lambda$ is finite. 
	Since all the terms are continuous with respect to $\lambda$, we have 
	\begin{equation}\label{eq:wholestep1.5}
	U^i_\Lambda(x)\leq 0,\ \forall x\in\Sigma_\Lambda,\ \forall\ i=1,\dots,m.
	\end{equation}
	In particular, $U^i_\mu\leq 0, \forall \mu\leq\Lambda$, hence $\forall x\in\Sigma_\Lambda$, $\frac{\partial u^i}{\partial x_1}(x)\leq0$ by covering argument. By \eqref{eq:EIW} and \cref{rmk:dijinf}, we have on $\Sigma_\Lambda$,
	\begin{equation}\label{eq:EIWF}
	\operatorname{tr}\left({\bf A^i}(x)D^2U^i_\Lambda(x)\right)+{\bf B^i}(x)\cdot\nabla U^i_\Lambda(x)-\widetilde{d_{ii}}(x)U^i_\Lambda(x)\geq0,\ \forall\ i=1,\dots,m.
	\end{equation}
	
	Since all the coefficient of \eqref{eq:EIWF} are locally bounded, by \cref{lem:SMP-HL}, we have $\forall i=1,\dots,m$, either
	\begin{align*}
	&U^i_\Lambda(x)<0, \hspace{0.1cm}\forall x\in \Sigma_\Lambda,\\
	\text{or}\hspace{0.1cm}&U^i_\Lambda(x)\equiv 0, \hspace{0.1cm}\forall x\in \Sigma_\Lambda.
	\end{align*}
	If the former one happens, we also have
	\begin{align*}
	\frac{\partial U^i_\Lambda}{\partial x_1}(x)>0, \hspace{0.1cm}\forall x\in T_\Lambda.
	\end{align*}
	
	Now, if $\Lambda=0$, we have $\frac{\partial u^i}{\partial x_1}(0,x')=0$ for some $x\in \{x_1=0\}$, hence $U^i_\Lambda(x)\equiv 0, \hspace{0.1cm}\forall x\in \Sigma_\Lambda$.
	
	Next, we discuss the case of $\Lambda<0$.
	
	\textbf{Step 2}: We will prove that $U^i_\Lambda\equiv 0$ on $\Sigma_\Lambda$ for at least one $i$. 
	
	As the discussion above, we only need to exclude the case that all the $U^i_\Lambda$ on $\Sigma_\Lambda$ are strictly negative, with directional derivative along $x_1$ being strictly positive on $T_\Lambda$. Assume this is happening.
	
	By definition of $\Lambda$, there exists a sequence of $\{\lambda_k\}_{k=1}^\infty\subset\br$ and a sequence of $\{y_k\}_{k=1}^\infty\subset\br^n$, such that $\lambda_k\in(\Lambda, 0)$, satisfying $\lim\limits_{k\to\infty}\lambda_k=\Lambda$, and $y_k\in\Sigma_{\lambda_k}$, such that at least for one $i$, $U^i_{\lambda_k}(y_k)>0$. By \cref{lemma:inf1}, we can choose $\{y_k\}\subset\overline{\Sigma_{\lambda_k}}\setminus\Sigma_{\Lambda}$, such that
	\begin{equation}\label{wstep2 max}
	U^i_{\lambda_k}(y_k)=\max_{y\in\overline{\Sigma_{\lambda_k}}}U^i_{\lambda_k}(y)>0, \hspace{0.1cm}k=1,2,\cdots
	\end{equation}
	There two cases will happen. 

\begin{enumerate}
	\renewcommand{\theenumi}{\textbf{Case \arabic{enumi}}}
	\renewcommand{\labelenumi}{\theenumi}
	\item the sequence $\{y_k\}_{k=1}^\infty$ exists bounded subsequence, that is $x_k\to x^*\in\overline{\Sigma_\Lambda}$.
	
	By continuity of $u^i$, $U^i_{\Lambda}(x^*)\geq 0$, Hence by $\left.U^i_\Lambda\right|_{\Sigma_\Lambda}<0$ we have $x^*\in T_\Lambda$. By mean value theorem, $$0\leq U^i_{\Lambda}(x_k)=2\frac{\partial u^i_\Lambda}{\partial x_1}(\xi_k)(\lambda_k-x_{k,1}),$$ where $\xi_k$ lies in the segment connecting $x_k$ and $x_k^{\lambda_k}$, which shows $\frac{\partial u^i_\Lambda}{\partial x_1}(\xi_k)\geq0$, hence $$\frac{\partial u^i_\Lambda}{\partial x_1}(x^*)\geq0,$$ By continuity of $u^i$, this is contradictory to $\left.\frac{\partial u^i}{\partial x_1}\right|_{T_\Lambda}=\left.-\frac12\frac{\partial U^i_\Lambda}{\partial x_1}\right|_{T_\Lambda}<0$. 
	
	\item  $\lim_{k\to\infty}|y_k|=+\infty$.  
	In this case, we can choose $k^*>0$ such that $\forall k>k^*$, $|y_k|>\widetilde{R}$, where $\widetilde{R}$ as in \cref{lemma:inf2}. Then for each $y_k, k>k^*$, the argument similar to \textbf{Step 1} will give a contradiction.
 \end{enumerate}
	
	\textbf{Step 3}: We need to prove all the $U^i_\Lambda\equiv 0$ on $\Sigma_\Lambda$. 
	
	By \textbf{Step 2}, denote $I:=\mleft\{i\ \middle|\ \left.U_\Lambda^i\right|_{\Sigma_\Lambda}\equiv0\mright\}$, $J:=\{1,\dots,m\}\setminus I$. If $J\neq\emptyset$, by \ref{Ffc}, there exists $i_0\in I, j_0\in J,$ such that
	$$\left|\mleft\{x\in\br^n\ \middle|\ f^{i_0} \text{strictly decreasing with respect to} z^{j_0}, \text{as} z^k, k\neq i_0,j_0, p\text{fixed}\mright\}\right|>0,$$ that is there exists $x_0\in\br^n$, such that $$\widetilde{d_{i_0j_0}}(x_0,u^1_\Lambda(x_0),\cdots,u^j(x_0),\cdots,u^m(x_0),0,U^{j_0}_\Lambda(x_0))U^{j_0}_\Lambda(x_0)<0.$$
	
	If $x_0\in\Sigma_\Lambda$, then by \cref{lemma:SMPFinf} and \eqref{eq:EIW}, together with $\widetilde{d_{ij}}\leq 0,i\neq j$ in \cref{rmk:dijinf}, and \eqref{eq:wholestep1.5} show that
	\begin{align*}
		0\geq&\sum\limits_{j\neq i_0} \widetilde{d_{i_0j}}U^j_\Lambda=\sum\limits_{j\in J} \widetilde{d_{i_0j}}U^j_\Lambda\\
		\geq&\widetilde{d_{i_0j_0}}U^{j_0}_\Lambda>0,
	\end{align*}
	which is a contradiction.
	
	If $x_0\in\Sigma_\Lambda^\Lambda$, consider the elliptic inequality of $\Sigma_\Lambda^\Lambda$ on $U^i_\Lambda$ should be
	\begin{equation*}
	\begin{split}
	\operatorname{tr}\left({\bf A^i}(x)D^2U^i_\Lambda(x)\right)+&{\bf B^i}(x)\cdot\nabla U^i_\Lambda(x)\\
	\geq&\sum_{j=1}^m -\widetilde{d_{ij}}(x,u^1_\Lambda,\cdots,u^j,\cdots,u^m,\nabla U^i_\Lambda,U^j_\Lambda)U^j_\Lambda.
	\end{split}
	\end{equation*}
	Noting that for $j\in J$, $U^j_\Lambda$ should be strictly positive on $\Sigma_\Lambda^\Lambda$, hence
	\begin{align*}
	0\geq&\sum\limits_{j\neq i_0} -\widetilde{d_{i_0j}}U^j_\Lambda=\sum\limits_{j\in J} -\widetilde{d_{i_0j}}U^j_\Lambda\\
	\geq& -\widetilde{d_{i_0j_0}}U^{j_0}_\Lambda>0,
	\end{align*}
	by the same reason, which comes out a contradiction again. Hence $J=\emptyset$, that is all the $U^i_\Lambda\equiv 0$ hold on $\Sigma_\Lambda$.
	
\end{prf4}

\vskip 0.2in

\section{The Half Space $\Omega=\br^n_+$.}\label{sec:thehalf}
\noindent

\begin{figure}[ht]
	\centering
	\includegraphics[width=2.7in]{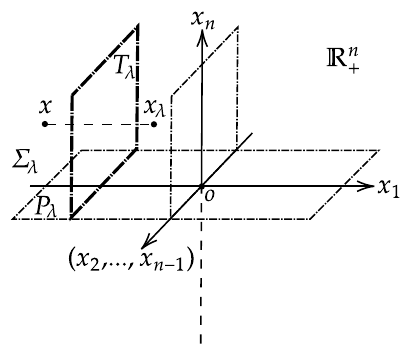}
	\caption{The half space}
	\label{fig:half}
\end{figure} 

In this case, we consider the following Neumann problem in the half space $\Omega=\br^n_+$, $n\geq 2$. Denote $x=(x',x_n)$, $x'=(x_1,\cdots,x_{n-1})\in \br^{n-1}$, $x_n\geq 0$, and $r=|x'|$, for all $1\leq i\leq m$, 

\begin{equation}\label{eq:halfN}
\left\{\begin{array}{rl}
&\det(D^2 u^i)\ =f^i(x,{\bf u},\nabla u^i),\ x\in\ \br^n_+, \\
&u^i(x)>0,\hspace{2.9cm}\ x\in\ \br^n_+,\\
&\frac{\partial u^i}{\partial x_n}(x)=h^i(r),\hspace{2.05cm}\ x\in\ \partial\br^n_+,\\
&\lim\limits_{|x|\to\infty}u^i(x)\ =\infty,
\end{array}
\right.
\end{equation}

We have here a hypothesis on the boundary condition:
\begin{enumerate}[label=$(H)$]
	\item \label{H} $h^i(r)\in C^1([0,+\infty))$, $\frac{\partial h^i}{\partial r}\leq 0$, $i=1,\dots,m$. 
\end{enumerate}

The behavior of $u^i$ at the infinity as above are natural since we always assume $u^i$ to be strictly convex.

We need one more technical condition of ${\bf f}$,
\begin{enumerate}[label=$(F_n)$]
	\item \label{Fziinfh} $f^i$ is non-decreasing with respect to $z^i$ and strictly increasing on $\{x_n=0\}$ such that $\widetilde{d_{ii}}\geq d>0$, as $z^k$, $k\neq i$, $x,p$ fixed;
\end{enumerate}

\vskip 0.2in
\subsection{Main Theorem}
The main results are the following.

\begin{thm}\label{thm:half}
	Assume ${\bf f}$ satisfy \ref{Fc}, \ref{Fzj}, \ref{Fp}, \ref{Fanti}, \ref{FH_i} and \ref{Fziinfh},  ${\bf h}$ satisfy \ref{H}. Let ${\bf u}=(u^1,\cdots,u^m)$ be a group of $[C^2(\overline{\br^n_+},\br_+)]^m$ strictly convex solutions to \eqref{eq:halfN} satisfying \eqref{eq:inf1} and \eqref{eq:inf2}, then there exists $t_1\leq0$ such that for each $u^i$,
	$$u^i(x_1,x'',x_n)\geq u^i(2t_1-x_1,x'',x_n),\ \forall\ x\in \br^n_+\ \text{with}\ x_1<t_1$$
	where $x''=(x_2,\cdots,x_{n-1})\in \br^{n-2}$, and $$\frac{\partial u^i}{\partial x_1}<0,\ \forall\ x\in \br^n_+\ \text{with}\ x_1<t_1.$$
	Furthermore, if $\frac{\partial u^i}{\partial x_1}(t_1,x')=0$ for some $x\in \{x_1=t_1\}$, with $t_1=0$ (or $t_1<0$) then such (or all) $u^i$ must be symmetric with respect to  $\{x_1=t_1\}$ and strictly decreasing in $x_1$ direction with $x_1<t_1$,
	that is,
	\begin{equation*}
		u^i(x)=u^i(|x_1-t_1|,x'',x_n),\ \forall\ x\in \br^n_+,
	\end{equation*}
	moreover, 
	\begin{equation*}
		\frac{\partial u^i}{\partial x_1}<0,\ \forall x\in\{x\in \br^n_+\ |\ x_1<t_1\}.
	\end{equation*}
\end{thm}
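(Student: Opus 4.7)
The plan is to apply the moving plane method in the direction ${\bf e_1}$, following the three-step structure of the proof of \cref{thm:whole} while paying careful attention to the Neumann boundary $\partial \br^n_+$. First I define $u^i_\lambda(x):=u^i(2\lambda-x_1,x'',x_n)$ and $U^i_\lambda:=u^i_\lambda-u^i$. The derivation of \eqref{eq:EIW} on $\Sigma_\lambda$ carries over verbatim under the hypotheses \ref{Fc}, \ref{Fzj}, \ref{Fp}, \ref{Fanti}, and \ref{Fziinfh}; moreover, \ref{Fc} together with strict convexity yields uniform positive definiteness of ${\bf A^i}$. On the Neumann portion $\partial\br^n_+\cap\Sigma_\lambda$, a direct computation gives $r_\lambda^2-r^2=4\lambda(\lambda-x_1)\leq 0$ whenever $\lambda\leq 0$ and $x_1\leq \lambda$, so $r_\lambda\leq r$, and \ref{H} delivers the crucial one-sided Neumann estimate
\begin{equation*}
\frac{\partial U^i_\lambda}{\partial x_n}(x) = h^i(r_\lambda)-h^i(r) \geq 0 \quad\text{on } \partial\br^n_+\cap\Sigma_\lambda.
\end{equation*}

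As in Step~1 of \cref{thm:whole}, I next show $U^i_\lambda\leq 0$ on $\Sigma_\lambda$ for every $i$ when $\lambda$ is sufficiently negative. The interior-maximum argument is identical: \cref{lemma:inf1} locates finite maximizers $y_i$, \cref{lemma:inf2} and \ref{FH_i} provide invertibility of the coupling matrix at infinity, and \cref{lemmaA} together with Cramer's rule yields the desired sign. The only genuinely new case is a positive maximum $y_i$ on $\partial\br^n_+\cap\Sigma_\lambda$: the first-order condition forces $\partial U^i_\lambda/\partial x_n(y_i)\leq 0$, which combined with the Neumann estimate above gives equality; \cref{lem:SMP-HL} (applicable since the flat face satisfies the interior sphere condition) then upgrades this to $\partial U^i_\lambda/\partial x_n(y_i)>0$, a contradiction. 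I then define $\Lambda:=\sup\{\lambda : U^i_\mu\leq 0\text{ on }\Sigma_\mu,\ \forall i,\ \forall \mu\leq\lambda\}$. The growth $u^i\to\infty$ and the validity of \eqref{eq:anti} only for $\lambda<0$ give $\Lambda\in(-\infty,0]$; continuity yields $U^i_\Lambda\leq 0$; and the reduced inequality
\begin{equation*}
\operatorname{tr}({\bf A^i} D^2 U^i_\Lambda) + {\bf B^i}\cdot\nabla U^i_\Lambda - \widetilde{d_{ii}} U^i_\Lambda \geq 0
\end{equation*}
(from \cref{rmk:dijinf}) combined with \cref{lem:SMP-HL} produces, for each $i$, the dichotomy: either $U^i_\Lambda<0$ strictly on $\Sigma_\Lambda$ or $U^i_\Lambda\equiv 0$.

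The strict case $U^i_\Lambda<0$ for all $i$ is ruled out at $\Lambda<0$ by the continuation argument of Step~2 of \cref{thm:whole}, supplemented with the boundary-max exclusion above, so either $\Lambda=0$ or some $U^i_\Lambda\equiv 0$; the monotonicity assertion follows. For the symmetry assertion, if $\partial u^i/\partial x_1(t_1,x')=0$ at a point of $T_{t_1}$, then $\partial U^i_\Lambda/\partial x_1$ vanishes on $T_\Lambda$, killing the Hopf-derived strict positivity and forcing $U^i_\Lambda\equiv 0$; this yields symmetry of that particular $u^i$. When $t_1=\Lambda<0$, the cooperativity \ref{Fzj} propagates the symmetry to every $u^j$: for each $j\neq i$, the inequality
\begin{equation*}
L U^j_\Lambda - \widetilde{d_{jj}} U^j_\Lambda \geq \sum_{k\neq j} \widetilde{d_{jk}} U^k_\Lambda \geq \widetilde{d_{ji}} U^i_\Lambda = 0
\end{equation*}
combined with the strong maximum principle, applied iteratively across the remaining indices, forces $U^j_\Lambda\equiv 0$. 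The principal technical obstacle lies at the corner $T_\Lambda\cap\partial\br^n_+$, where the standard Hopf lemma does not directly apply; I resolve it by invoking Serrin's corner lemma \cref{lemmaS}, whose ellipticity condition \eqref{eq:SL} is verified using the uniform positive definiteness of ${\bf A^i}$ (from \ref{Fc}) together with the strict positivity $\widetilde{d_{ii}}\geq d>0$ on $\{x_n=0\}$ supplied by the new hypothesis \ref{Fziinfh}—introduced precisely to control this corner degeneracy.
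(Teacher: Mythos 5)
Your proposal identifies the right ingredients (Neumann estimate on $P_\lambda$, Serrin's corner lemma, the matrix argument from \cref{lemmaA}), but two of its central steps are incorrect or missing ideas that the paper supplies via \cref{lemma:half2}. First, the boundary-maximum exclusion in Step~1 is broken: at a point $y_i$ where $U^i_\lambda(y_i)=\max>0$ on $P_\lambda$, you cannot apply \cref{lem:SMP-HL} directly to $U^i_\lambda$, because the differential inequality \eqref{eq:EIH} contains the cross terms $\sum_{j\neq i}\widetilde{d_{ij}}U^j_\lambda$, whose signs are unknown at this stage (we have not yet proved $U^j_\lambda\leq 0$ for all $j$), so there is no clean scalar inequality of the form required by Hopf's lemma. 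The paper resolves this in \cref{lemma:half2} by localizing to a ball $B_r(y_1)$, subtracting a large constant $C^i$, and invoking \ref{Fziinfh} to guarantee $\widetilde{d_{ii}}\geq d>0$ near $\{x_n=0\}$, so that the term $\widetilde{d_{ii}}C^i$ dominates the bounded cross terms and produces a strictly positive right-hand side; only then does Hopf yield $\partial U^i_\lambda/\partial x_n(y_1)<0$ and contradict \eqref{eqP}. In other words, \ref{Fziinfh} is there precisely for this constant-shift trick, not (as you claim) to verify Serrin's ellipticity condition at the corner; that condition \eqref{eq:SL} is in fact verified by computing that $a_{1j}=0$ for $j\neq 1$ on $T_\Lambda$ from \eqref{HessainonT} and \eqref{eq:aij} (a feature of the Monge--Amp\`ere structure, giving the \emph{equality} case of Serrin's lemma, which is then resolved via \ref{H} and \eqref{eq:H3}), not by positive-definiteness of ${\bf A^i}$.

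Second, your symmetry-propagation argument in the final paragraph does not close. From the inequality $\operatorname{tr}({\bf A^j}D^2U^j_\Lambda)+{\bf B^j}\cdot\nabla U^j_\Lambda-\widetilde{d_{jj}}U^j_\Lambda\geq 0$ together with $U^j_\Lambda\leq 0$, the strong maximum principle yields only the dichotomy $U^j_\Lambda<0$ or $U^j_\Lambda\equiv 0$; it does not select $\equiv 0$, and ``applying it iteratively'' adds nothing without a strict-coupling hypothesis. The paper's Step~3 rules out the mixed case using the strong-coupling condition \ref{Ffc} (or, equivalently, the system-wide dichotomy of \cref{lemma:SMPFinf}) by finding a point where $\widetilde{d_{i_0j_0}}U^{j_0}_\Lambda>0$ strictly and deriving a contradiction in the elliptic inequality for the already-symmetric component $i_0$ on both $\Sigma_\Lambda$ and $\Sigma_\Lambda^\Lambda$. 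Plain cooperativity \ref{Fzj} is not strong enough to propagate symmetry across decoupled or weakly coupled components. You would need to invoke the corresponding lemma or hypothesis explicitly; as written, this step is a gap.
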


\vskip 0.2in

If we assume more symmetry on $f^i$ (substituting \ref{Fanti} with \ref{Fsym1}), we can furthermore immediately have the following, by using \cref{thm:half} again with ${\bf u_{t_1}}:=(u^i_{t_1})$. (Note that in this case, the inequalities \eqref{eq:anti},\eqref{eq:splitinf} will be slightly different to obtain the same result.)

\begin{thm}
	Assume ${\bf f}$ satisfy \ref{Fc}, \ref{Fp}, \ref{Ffc}, \ref{Fsym1}, \ref{FH_i} and \ref{Fziinfh},  ${\bf h}$ satisfy \ref{H}. Let ${\bf u}=(u^1,\cdots,u^m)$ be a group of $[C^2(\overline{\br^n_+},\br_+)]^m$ strictly convex solutions to \eqref{eq:halfN} satisfying \eqref{eq:inf1} and \eqref{eq:inf2}, then there exists $t_1\in\br$ such that for each $u^i$ must be symmetric with respect to  $\{x_1=t_1\}$ and strictly decreasing in $x_1$ direction with $x_1<t_1$. 
	
	More precisely, for all $i=1,\dots, m$, each $u^i$ must be like
	\begin{equation*}
	u^i(x)=u^i(|x_1-t_1|,x'',x_n),\ \forall\ x\in \br^n_+,
	\end{equation*}
	moreover, 
	\begin{equation*}
	\frac{\partial u^i}{\partial x_1}<0,\ \forall x\in\{x\in \br^n_+\ |\ x_1<t_1\}.
	\end{equation*}
\end{thm}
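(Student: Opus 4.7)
The plan is to invoke \cref{thm:half} twice: once on the original tuple, then once on the reflected tuple, exploiting that \ref{Fsym1} (unlike \ref{Fanti}) preserves the system under reflection. First, apply \cref{thm:half} to ${\bf u}$. Hypothesis \ref{Fsym1} is strong enough to replace \ref{Fanti} in that proof---the inequality \eqref{eq:anti} is rewritten using the $(x_1,p_1)\leftrightarrow(|x_1|,|p_1|)$ symmetry and the downstream estimate \eqref{eq:splitinf} is adjusted accordingly (the ``slight modification'' flagged in the text). The output is a threshold $t_1\le 0$ with $u^i\ge u^i_{t_1}$ and $\partial_{x_1}u^i<0$ on $\{x_1<t_1\}$ for every $i$.

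Next, form the reflected tuple ${\bf u}_{t_1}:=(u^1_{t_1},\ldots,u^m_{t_1})$. By \eqref{eq:det} the Hessian determinant is reflection-invariant, and by \ref{Fsym1} together with the $r=|x'|$-dependence of $h^i$, the tuple ${\bf u}_{t_1}$ is again a strictly convex solution of \eqref{eq:halfN} satisfying \eqref{eq:inf1}--\eqref{eq:inf2}. Applying \cref{thm:half} to ${\bf u}_{t_1}$ produces a second threshold $\widetilde{t_1}\le 0$ with analogous one-sided monotonicity; unfolding the reflection, each original $u^i$ is strictly \emph{increasing} in $x_1$ on $\{x_1>2t_1-\widetilde{t_1}\}$.

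It remains to match the two thresholds. Strict convexity of each $u^i$ in every $x_1$-slice guarantees a unique minimum per slice, so the strictly-decreasing and strictly-increasing half-strips cannot overlap; this already forces $\widetilde{t_1}\le t_1$. A strict gap $\widetilde{t_1}<t_1$ is then ruled out by the Hopf and strong-maximum-principle dichotomy used in Steps~2 and 3 of the proof of \cref{thm:whole}, adapted via \cref{lem:SMP-HL} in the interior and \cref{lemmaS} at the corner $T_{t_1}\cap\partial\br^n_+$. Matching the thresholds and landing in the equality branch yields $U^i_{t_1}\equiv 0$ on $\Sigma_{t_1}$, i.e.\ $u^i(x)=u^i(|x_1-t_1|,x'',x_n)$. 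Finally, \ref{Ffc} is invoked exactly as in Step~3 of the proof of \cref{thm:whole} to guarantee that the center $t_1$ is common to all components $u^i$.

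The main obstacle will be the second step: verifying that the reflected tuple truly solves the \emph{same} system \eqref{eq:halfN} when $t_1<0$, so that the reflecting hyperplane $\{x_1=t_1\}$ is not the natural symmetry plane of $f^i$ at $\{x_1=0\}$. This is where \ref{Fsym1} must be leveraged carefully, using the absolute-value symmetry to identify $f^i(x_{t_1},{\bf z},\bar p)$ with $f^i(x,{\bf z},p)$ on the relevant set so that the elliptic differential inequality of the form \eqref{eq:EIinf} can be rederived for the reflected system; handling the corner lemma across the Neumann boundary is the secondary technical point, mirroring its use in the proof of \cref{thm:half} itself.
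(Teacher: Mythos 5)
Your overall strategy---apply \cref{thm:half} to ${\bf u}$ and then again to a reflected tuple---is exactly what the paper does (and the paper leaves the details to the reader, saying only that the inequalities \eqref{eq:anti}, \eqref{eq:splitinf} ``will be slightly different'' and that one should use \cref{thm:half} again with ${\bf u}_{t_1}$). However, there is a gap in your handling of the reflected-tuple step that your proposed fix does not close.

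You correctly flag the obstacle: when $t_1<0$, the reflected tuple ${\bf u}_{t_1}$ solves a system whose right-hand side is $g^i(x,{\bf z},p)=f^i\bigl((2t_1-x_1,x'',x_n),{\bf z},\bar p\bigr)$, which by \ref{Fsym1} is symmetric about $\{x_1=2t_1\}$, not about $\{x_1=0\}$. But your suggested remedy---``using the absolute-value symmetry to identify $f^i(x_{t_1},{\bf z},\bar p)$ with $f^i(x,{\bf z},p)$''---fails: for $t_1<0$ and $x_1<t_1$, one always has $|2t_1-x_1|<|x_1|$, so the two values of $f^i$ are genuinely different; \ref{Fsym1} provides an identity only across $\{x_1=0\}$, not across $\{x_1=t_1\}$. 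The way out of this is not an identity on $f^i$ but an observation you did not use: when $t_1<0$, Steps 2 and 3 of the proof of \cref{thm:half} already force $U^i_{t_1}\equiv 0$ on $\Sigma_{t_1}$ for every $i$ (the hypothesis of the ``Furthermore'' clause is \emph{automatically} satisfied there). In that branch ${\bf u}_{t_1}={\bf u}$ and the second application is vacuous---the symmetry conclusion is already in hand after the first application. The second application is only genuinely needed when $t_1=0$, and there ${\bf u}_{0}$ solves \eqref{eq:halfN} exactly by \ref{Fsym1}.

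Your threshold-matching step is also more machinery than needed, and in fact its intended conclusion can be wrong. Once $t_1=0$ and $\widetilde{t_1}\le 0$ is obtained for ${\bf u}_0$: if $\widetilde{t_1}<0$, then ${\bf u}_0$ is symmetric about $\{x_1=\widetilde{t_1}\}$, i.e. ${\bf u}$ is symmetric about $\{x_1=-\widetilde{t_1}\}$, and the theorem's center is $-\widetilde{t_1}>0$ (the statement allows $t_1\in\br$, not only $t_1\le 0$); the thresholds need not coincide, and ``landing in the equality branch $U^i_{t_1}\equiv 0$'' with $t_1=0$ would then be false. If instead $\widetilde{t_1}=0$, writing out $({\bf u}_0)_0={\bf u}$ gives $u^i-({\bf u}_0)^i\le 0$, i.e. $-U^i_0\le 0$; combining with $U^i_0\le 0$ from the first application yields $U^i_0\equiv 0$ directly. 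In either case there is nothing to be ruled out by \cref{lem:SMP-HL} or \cref{lemmaS}, so that step should be dropped. The invocation of \ref{Ffc} for consistency of the center, mirroring Step 3 of the proof of \cref{thm:whole}, is the right call and matches the paper.
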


\vskip 0.2in

Especially, if we substitute \ref{Fanti} to the symmetric one \ref{Fsymall}, then by using \cref{thm:half} respect to all directions in $\br^n$, we have:
\begin{coro}
	Assume ${\bf f}$ satisfy \ref{Fc}, \ref{Fp}, \ref{Ffc}, \ref{Fsymall}, \ref{FH_i} and \ref{Fziinfh},  ${\bf h}$ satisfy \ref{H}. Let ${\bf u}=(u^1,\cdots,u^m)$ be a group of $[C^2(\overline{\br^n_+},\br_+)]^m$ strictly convex solutions to \eqref{eq:halfN} satisfying \eqref{eq:inf1} and \eqref{eq:inf2}, then each $u^i$ must be radially symmetric and strictly increasing respect to some point in $\br^n_+$. 
	
	More precisely, denote the rotating center as $x^*=((x^*)',x^*_n)\in\br^n_+$, and $r=|x'-(x^*)'|$, then for $i=1\dots,m$, each $u^i$ must be like
	\begin{equation*}
		u^i(x)=u^i(r,x_n),\ \forall\ x\in \br^n_+,
	\end{equation*}
	moreover, 
	\begin{equation*}
		\frac{\partial u^i}{\partial r}(r,x_n)>0,\ \forall\ x\in \br^n_+.
	\end{equation*}
\end{coro}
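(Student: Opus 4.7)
The plan is to derive this corollary from Theorem~\ref{thm:half} by applying that theorem in every horizontal unit direction $\nu=(\nu',0)$ and then combining the resulting reflection symmetries into an axial symmetry about a single vertical line. Under \ref{Fsymall}, $f^i(x,{\bf z},p)$ is invariant under any orthogonal transformation of $x$ and (independently) of $p$; in particular it is preserved by every $R\in O(n-1)$ rotating the $x'$-variables while fixing $e_n$. Combined with the $O(n-1)$-invariance of the Neumann datum $h^i(|x'|)$, the problem \eqref{eq:halfN} is invariant under horizontal rotations, so all the hypotheses of Theorem~\ref{thm:half} carry over to a frame in which a chosen horizontal $\nu$ plays the role of $e_1$; in particular \ref{Fsym1} is a direct consequence of \ref{Fsymall} (taking $O=O'=\mathrm{diag}(-1,1,\ldots,1)$ shows $f^i$ is separately even in $x_1$ and in $p_1$). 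Applying Theorem~\ref{thm:half} in those rotated coordinates produces $t_\nu\in\br$, common to all $u^i$ by \ref{Ffc}, such that each $u^i$ is symmetric about $H_\nu:=\{x\cdot\nu=t_\nu\}$ and strictly decreasing along $\nu$ on $\{x\cdot\nu<t_\nu\}$.

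To locate the rotation axis, first specialize to the Cartesian horizontal directions $e_j$, $j=1,\ldots,n-1$, obtaining scalars $t_j$ and symmetry of each $u^i$ about $\{x_j=t_j\}$. Set $(x^*)':=(t_1,\ldots,t_{n-1})$ and let $L:=\{((x^*)',s):s\geq 0\}$ be the candidate vertical axis. Each $u^i$ is strictly convex on the convex set $\overline{\br^n_+}$ and satisfies $u^i(x)\to\infty$ as $|x|\to\infty$, so it attains a unique minimum at some $\bar x^i\in\overline{\br^n_+}$; moreover every reflection symmetry of $u^i$ must fix $\bar x^i$ by uniqueness of the minimizer. Hence $\bar x^i\in\bigcap_{j=1}^{n-1}\{x_j=t_j\}=L$, so in particular $(\bar x^i)'=(x^*)'$. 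For any horizontal $\nu$, the same fixed-point argument applied to the reflection across $H_\nu$ forces $\bar x^i\in H_\nu$, whence $t_\nu=\bar x^i\cdot\nu=(x^*)'\cdot\nu'$; i.e.\ every $H_\nu$ contains the vertical line $L$.

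Once every $H_\nu$ contains $L$, the group generated by the family of reflections across $\{H_\nu:\nu\in S^{n-1},\ \nu_n=0\}$ is the full orthogonal group $O(n-1)$ acting on the $x'$-variables centered at $(x^*)'$. Invariance of each $u^i$ under this group yields $u^i(x)=u^i(|x'-(x^*)'|,x_n)=u^i(r,x_n)$, which is the axial symmetry claimed. The strict monotonicity $\frac{\partial u^i}{\partial r}(r,x_n)>0$ on $\br^n_+$ follows from the strict-monotonicity clause of Theorem~\ref{thm:half} applied to any direction $\nu$ pointing radially outward from $L$. The main obstacle is the alignment step in the second paragraph, namely verifying that all the hyperplanes $H_\nu$ for distinct horizontal $\nu$ share the common axis $L$; the minimizer-based fixed-point argument handles this cleanly, with the only routine technical point being the existence and uniqueness of the minimizer $\bar x^i$ of each $u^i$ on $\overline{\br^n_+}$, which is immediate from strict convexity and coercivity (even if the minimum happens to lie on $\partial\br^n_+$ the argument is unaffected).
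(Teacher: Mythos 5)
Your proof is correct and follows the same strategy the paper outlines for this corollary, namely applying \cref{thm:half} (together with its \ref{Fsym1}-strengthened variant, deduced from \ref{Fsymall}) in every horizontal direction $\nu$ and then assembling the resulting reflection symmetries into axial symmetry. The paper states this in one sentence and leaves the alignment step implicit; your unique-minimizer fixed-point argument (strict convexity plus coercivity give a unique minimizer of each $u^i$ on $\overline{\br^n_+}$, each vertical reflection preserves $\overline{\br^n_+}$ and $u^i$, hence fixes the minimizer, hence the corresponding hyperplane passes through a common vertical line) is a clean and correct way to supply that missing detail, and correctly uses \ref{Ffc} to get a single $t_\nu$ shared by all components.
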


\vskip 0.2in
\subsection{Proof of \cref{thm:half}}

In what follows, we shall use the method of moving plane. We start by considering hyper-planes parallel to $\{x_1=0\}$, coming from $-\infty$. To proceed, for each $\lambda\leq 0$, we define 
\begin{align*}
\Sigma_\lambda&:=\{x\in\br^n_+|x_1<\lambda\},\\
T_\lambda&:=\{x\in\br^n_+|x_1=\lambda\},\\
P_\lambda&:=\partial \Sigma_\lambda\setminus T_\lambda=\{x=(x_1,\cdots,x_{n-1},0)\in\br^n_+|x_1<\lambda\}.
\end{align*}
For any point $x=(x_1,\cdots,x_{n-1},x_n)\in\Sigma_\lambda$, let $x_\lambda=(2\lambda-x_1,\cdots,x_{n-1},x_n)$ be the reflected point with respect to the plane $T_\lambda$. Define the reflected function by 
\begin{equation*}
u_\lambda(x):=u(x_\lambda)
\end{equation*}
and introduce the function 
\begin{equation*}
U_\lambda(x):=u_\lambda(x)-u(x) \text{ on }\overline{\Sigma_\lambda}.
\end{equation*}

By the same procedure in calculating \eqref{eq:EIinf}, we can obtain an elliptic inequality of $U^i_\lambda$ in $\Sigma_\lambda$:
$\forall x\in\Sigma_\lambda$ such that $\frac{\partial u^i}{\partial x_1}(x)\leq0$, we have
\begin{equation}\label{eq:EIH}
\begin{split}
\operatorname{tr}\left({\bf A^i}(x)D^2U^i_\Lambda(x)\right)+&{\bf B^i}(x)\cdot\nabla U^i_\lambda(x)\\
\geq&\sum_{j=1}^m \widetilde{d_{ij}}(x,u^1_\lambda,\cdots,u^j,\cdots,u^m,\nabla U^i_\lambda,U^j_\lambda)U^j_\lambda,
\end{split}
\end{equation}
where ${\bf A^i}(x):=(a^i_{jk}(x))_{j,k=1}^n$ with
\begin{equation*}
a^i_{jk}(x):=\int_0^1\det\left((1-t)D^2u^i_\lambda(x)+tD^2u^i(x)\right)\left((1-t)D^2u^i_\lambda(x)+tD^2u^i(x)\right)^{jk}dt,
\end{equation*}
are strictly positive definite due to the strictly convexity of $u^i$ and \ref{F0}, and together with $${\bf B^i(x)}:=\frac{h_{f^i,p}}{|\nabla U^i_\lambda(x))|}\chi_{\{|\nabla U^i_\lambda(x))|\neq0\}}\nabla U^i_\lambda(x)$$ are all locally bounded due to the twice differentiable continuity of $u^i$.

For all $\lambda<0$, the fact that $|x_\lambda|\leq|x|$ together with assumption \ref{H} lead us to $U^i_\lambda$ on the boundary that
\begin{equation}\label{eqT}
U^i_\lambda(x)=0, \hspace{0.5cm} x\in\overline{T_\lambda},
\end{equation}
\begin{equation}\label{eqP}
\frac{\partial U^i_\lambda}{\partial x_n}=h^i(|x_\lambda|)-h^i(|x|)\geq 0,  \hspace{0.5cm}x\in P_\lambda.
\end{equation}

Next we prove a preliminary lemma.

\begin{lemma}\label{lemma:half2}
	Let ${\bf u}=(u^1,\cdots,u^m)\in\left[C^2(\overline{\br^n_+},\br_+)\right]^m$ be a group of strictly convex solutions to \eqref{eq:halfN} satisfying the conditions in \cref{thm:half}, then $\forall\lambda<0$, $i=1,\dots,m$, if the maximum point of $U^i_\lambda$ on $\overline{\Sigma_\lambda}$ exists and the maximum is positive, then such point must be lying inside $\Sigma_\lambda$. That is, if $U^i_\lambda(y_1)=\max\limits_{y\in\overline{\Sigma_\lambda}}U^i_\lambda(y)>0$, then $y_1\in\Sigma_\lambda$.
\end{lemma}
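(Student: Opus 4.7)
The plan is to show that the maximizer $y_1$ cannot lie on either piece of the boundary $\partial\Sigma_\lambda = \overline{T_\lambda}\cup P_\lambda$. Disposing of $T_\lambda$ is immediate: by \eqref{eqT} one has $U^i_\lambda\equiv 0$ on $\overline{T_\lambda}$, so $U^i_\lambda(y_1)>0$ forces $y_1\notin\overline{T_\lambda}$; in particular any corner point in $\overline{T_\lambda}\cap\overline{P_\lambda}$ is excluded.

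The substantial case is to rule out $y_1\in P_\lambda$, which I would argue by contradiction. Suppose $y_1=(y_1',0)$ with $(y_1)_1<\lambda<0$. Since $y_1$ is a maximum point lying on the flat boundary $P_\lambda$, whose outward unit normal is $-e_n$, the inward directional derivative at the maximum satisfies $\frac{\partial U^i_\lambda}{\partial x_n}(y_1)\leq 0$. On the other hand, the elementary identity $|(y_1)_\lambda|^2-|y_1|^2 = 4\lambda(\lambda-(y_1)_1)$ is negative for $\lambda<0$ and $(y_1)_1<\lambda$, so the monotonicity hypothesis \ref{H} combined with the Neumann relation \eqref{eqP} yields $\frac{\partial U^i_\lambda}{\partial x_n}(y_1)\geq 0$. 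These two inequalities together force
\begin{equation*}
\frac{\partial U^i_\lambda}{\partial x_n}(y_1)=0.
\end{equation*}
The strategy is then to contradict this equality via Hopf's lemma in a half-ball $B_\delta(y_1)\cap\overline{\Sigma_\lambda}$: by continuity $U^i_\lambda>0$ there, and starting from the elliptic inequality \eqref{eq:EIH} I would derive a scalar linear elliptic inequality of the form $\operatorname{tr}({\bf A^i}(x)D^2 U^i_\lambda)+{\bf B^i}(x)\cdot\nabla U^i_\lambda+c^i(x)U^i_\lambda\geq 0$ with locally bounded coefficients and strictly positive definite ${\bf A^i}$. Applying \cref{lem:SMP-HL} to $w:=U^i_\lambda-U^i_\lambda(y_1)\leq 0$ would yield $\frac{\partial w}{\partial(-e_n)}(y_1)>0$, i.e., $\frac{\partial U^i_\lambda}{\partial x_n}(y_1)<0$, contradicting the displayed equality and forcing $y_1\in\Sigma_\lambda$.

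The main obstacle I anticipate is the passage from the coupled system inequality \eqref{eq:EIH} to a scalar elliptic inequality suitable for Hopf's lemma, since its right-hand side mixes all the components $U^j_\lambda$. The way I plan to handle this is to invoke \cref{rmk:dijinf}, which provides $\widetilde{d_{ij}}\leq 0$ for $j\neq i$ and $\widetilde{d_{ii}}\geq 0$, together with the positivity of $U^i_\lambda$ near $y_1$, so that the coupling terms may be absorbed into a locally bounded zeroth-order coefficient; condition \ref{Fziinfh} ensures the diagonal contribution has the appropriate sign for this absorption. A secondary technicality to verify is that \eqref{eq:EIH} remains applicable throughout the half-neighborhood of $y_1$, which requires $\frac{\partial u^i}{\partial x_1}\leq 0$ to hold there; this should be inherited from \ref{Fanti}, the strict convexity of each $u^i$, and the fact that $\overline{\Sigma_\lambda}\subset\{x_1\leq\lambda<0\}$.
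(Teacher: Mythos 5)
Your overall strategy matches the paper's: dispose of $T_\lambda$ via \eqref{eqT}, then rule out $y_1\in P_\lambda$ by a Hopf boundary-point argument that contradicts the Neumann relation \eqref{eqP}. Your observation that the first-order maximum condition combined with \eqref{eqP} forces $\frac{\partial U^i_\lambda}{\partial x_n}(y_1)=0$ is a nice tightening (the paper only invokes the $\geq 0$ direction), and the identity $|(y_1)_\lambda|^2-|y_1|^2=4\lambda(\lambda-(y_1)_1)$ is correct.

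The gap is in the passage from \eqref{eq:EIH} to a scalar Hopf inequality. Setting $w:=U^i_\lambda-U^i_\lambda(y_1)$, one gets from \eqref{eq:EIH}
\begin{equation*}
\operatorname{tr}\left({\bf A^i}D^2w\right)+{\bf B^i}\cdot\nabla w-\widetilde{d_{ii}}\,w\ \geq\ \widetilde{d_{ii}}\,U^i_\lambda(y_1)+\sum_{j\neq i}\widetilde{d_{ij}}\,U^j_\lambda ,
\end{equation*}
and the right-hand side is what has to be $\geq 0$ for \cref{lem:SMP-HL} to apply. The off-diagonal sum $\sum_{j\neq i}\widetilde{d_{ij}}U^j_\lambda$ is locally bounded but has no sign at this stage of the argument (the $U^j_\lambda$, $j\neq i$, may well be positive somewhere in the neighborhood of $y_1$, which with $\widetilde{d_{ij}}\leq 0$ makes the sum negative), and it cannot be ``absorbed'' as a coefficient times $w$ because $w$ vanishes at $y_1$ while the sum need not. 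Likewise absorbing it as a coefficient times $U^i_\lambda$ produces a zeroth-order coefficient of indeterminate sign, and the shift back to $w$ reintroduces the problematic constant $-c\,U^i_\lambda(y_1)$. So with the natural shift by $U^i_\lambda(y_1)$ the needed inequality $Lw+cw\geq 0$ does not follow.

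The paper's device is different and is the essential point you are missing: it subtracts a \emph{large} constant $C^i$, chosen with
$C^i>\max\bigl\{2\sum_j M^i_j/\min_{\overline{B_r(y_1)}}\widetilde{d_{ii}},\ U^i_\lambda(y_1)\bigr\}$,
so that the diagonal contribution $\widetilde{d_{ii}}C^i$ — which is bounded below by a positive constant precisely because of \ref{Fziinfh}, the strict positivity of $\widetilde{d_{ii}}$ on $\{x_n=0\}$ — dominates the locally bounded coupling terms and turns the differential inequality into a \emph{strict} one on $B_r(y_1)\cap\br^n_+$. This is what makes the Hopf step go through; \ref{Fziinfh} is used quantitatively (as a lower bound on $\widetilde{d_{ii}}$ near the boundary), not merely for sign. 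Your write-up names \ref{Fziinfh} but does not exploit this quantitative content, and so the central estimate remains unproved.

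As a secondary remark, the applicability of \eqref{eq:EIH} in a half-neighborhood of $y_1$ does indeed require the gradient sign condition entering \ref{Fanti}; this point is genuinely subtle and the paper does not discuss it inside the lemma either, so you are right to flag it, though ``inherited from \ref{Fanti} and strict convexity'' is not by itself a verification.
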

\begin{proof}

Let $y_1\in\partial\Sigma_\lambda$ such that  $U^i_\lambda(y_1)=\max\limits_{y\in\overline{\Sigma_\lambda}}U^i_\lambda(y)>0$. By \eqref{eqT}, we only need to show that $y_1\notin P_\lambda$. 
	
By continuity, we can choose $r>0$ such that 
	\begin{equation*}
		U^i_\lambda(y)>\frac{1}{2}U^i_\lambda(y_1)>0,\hspace{0.1cm}\forall y\in B_r(y_1)\cap\overline{\br_+^n} .
	\end{equation*}
Furthermore, since $\left.U^i_\lambda\right|_{T_\lambda}=0$, we can suppose there exists at least one another point $y_2\in \overline{B_r(y_1)\cap \br_+^n}$ such that $U^i_\lambda(y_2)<\frac{3}{4}U^i_\lambda(y_1)$, this ensure that $U^i_\lambda(x)$ will not be constant in $B_r(y_1)\cap \br_+^n$.
	
	Let $\overline{U^i_\lambda}(x)=U^i_\lambda(x)-C^i$, where $C^i>0$ to be determined. By \eqref{eq:EIH}, we deduce the elliptic inequality of $\overline{U^i_\lambda}$ on $B_r(y_1)\cap \br_+^n$,
	\begin{equation}\label{eq:EIHC}
	\begin{split}
	&\operatorname{tr}\left({\bf A^i}(x)D^2\overline{U^i_\lambda}(x)\right)+{\bf B^i}(x)\cdot\nabla \overline{U^i_\lambda}(x)-\widetilde{d_{ii}}(x)\overline{U^i_\lambda}(x)\\
	\geq&\sum_{j\neq i} \widetilde{d_{ij}}(x,u^1_\lambda,\cdots,u^j,\cdots,u^m,\nabla U^i_\lambda,U^j_\lambda){U^j_\lambda}(x)+\widetilde{d_{ii}}(x)C^i,
	\end{split}
	\end{equation}
	
	For $j=1,\dots,m$, each $u^j(x)\in C^1(\overline{B_r(y_1)})$, therefore $u^j_\lambda$, $U^j_\lambda$, and $\nabla U^i_\lambda$ are bounded on $\overline{B_r(y_1)}$, by continuity of $f^i$, so are they on $\overline{B_r(y_1)}$. Hence exists $M^i_j>0$, such that for all $x\in\overline{B_r(y_1)}$,  
	\begin{equation*}
		\begin{split}
			&\left|\widetilde{d_{ij}}(x,u^1_\lambda,\cdots,u^j,\cdots,u^m,\nabla U^i_\lambda,U^j_\lambda){U^j_\lambda}\right|\\
			=&\left|f^i(x,u^1_\lambda,\cdots,u^j_\lambda,\cdots,u^m,\nabla U^i_\lambda)-f^i(x,u^1_\lambda,\cdots,u^j,\cdots,u^m,\nabla U^i_\lambda)\right|<2M^i_j.
		\end{split}
	\end{equation*}
	Hence by \ref{Fziinfh}, $\min\limits_{x\in\overline{B_r(y_1)}}\widetilde{d_{ii}}(x)>0$, choosing $C^i>\max\left\{\frac{2\sum\limits_{j=1}^mM^i_j}{\min\limits_{x\in\overline{B_r(y_1)}}\widetilde{d_{ii}}(x)}, U^i_\lambda(y_1)\right\}>0$, such that
	\begin{align*}
		\sum_{j\neq i} \widetilde{d_{ij}}(x,u^1_\lambda,\cdots,u^j,\cdots,u^m,\nabla U^i_\lambda,U^j_\lambda)&{U^j_\lambda}(x)+\widetilde{d_{ii}}(x)C^i\\
		\geq&-2\sum_{j=1}^mM^i_j+\min\limits_{x\in\overline{B_r(y_1)}}\widetilde{d_{ii}}(x)C^i>0.
	\end{align*}
	Hence, \eqref{eq:EIHC} becomes
	\begin{equation*}
		\operatorname{tr}\left({\bf A^i}(x)D^2\overline{U^i_\lambda}(x)\right)+{\bf B^i}(x)\cdot\nabla \overline{U^i_\lambda}(x)-\widetilde{d_{ii}}(x)\overline{U^i_\lambda}(x)>0.
	\end{equation*}
	By using \cref{lem:SMP-HL} on $\overline{U^i_\lambda}(x)<0$ over $B_r(y_1)\cap \br_+^n$, noting that $y_1$ is the maximum point of $\overline{U^i_\lambda}(x)$ on $\overline{B_r(y_1)\cap \br_+^n}$, we have
	\begin{equation*}
		\frac{\partial \overline{U^i_\lambda}}{\partial x_n}(y_1)<0.
	\end{equation*}
	However, if $y_1\in P_\lambda$, combining with \eqref{eqP} and by directly calculating we have
	\begin{equation*}
		\frac{\partial \overline{U^i_\lambda}}{\partial x_n}(y_1)=\frac{\partial U^i_\lambda}{\partial x_n}(y_1)\geq0,
	\end{equation*}
	which is a contradiction.
\end{proof}

\vskip 0.2in

We are now in a position to prove \cref{thm:half}. 

\newenvironment{prf5}{{\noindent\bf Proof of \cref{thm:half}.}}{\hfill $\square$\par}
\begin{prf5}
We will apply the moving plane method in three steps.\\

{\bf Step 1}: there exist a real number $\lambda\in\br$ such that $\left.U^i_\mu\right|_{\Sigma_\mu}\leq0, \forall\ i=1,\dots,m, \forall\ \mu<\lambda$.

Assume for contradiction that $\forall \lambda<0$, there exists $y_0\in\Sigma_\lambda$ such that $U^{i_0}_\lambda(y_0)>0$ for some $i_0$. We set $J:=\{j=1,\dots,m\ |\ U^j_\lambda\leq0,\ \forall x\in\Sigma_\lambda\}\subsetneqq\{1,\dots,m\}$, and $I:=\{1,\dots,m\}\setminus J.$ Up to a permutation of the indices, we can assume $I=\{1,\dots,l\},\ J=\{l+1,\dots,m\},\ l=|I|$. 

Note that $\sup\limits_{\Sigma_\lambda}U^i_\lambda>0$, $\forall\ i\in I$. By \cref{lemma:inf1}, we may take $\{y_i\}_{i\in I}\subset \overline{\Sigma_\lambda}\setminus T_\lambda$ such that 
\begin{equation*}
U^i_\lambda(y_i)=\max_{y\in\overline{\Sigma_\lambda}}U^i_\lambda(y)>0.
\end{equation*}

For fixed $i\in I$, by \cref{lemma:half2}, $y_i$ lies in the interior of $\Sigma_\lambda$, hence $\nabla U^i_\lambda(y_i)=0$, and $D^2 U^i_\lambda(y_i)\leq 0$. Then the $i$-th equation in \eqref{eq:EIH} at $y_i$ becomes
	\begin{align*}
		0\geq\sum_{j=1}^m \widetilde{d_{ij}}(y_i,u^1_\lambda(y_i),\cdots,u^j(y_i),\cdots,u^m(y_i),0,U^j_\lambda(y_i))U^j_\lambda(y_i),
		\stepcounter{equation}\tag{\theequation}\label{eq20}
	\end{align*}
	By \cref{rmk:dijinf}, definition of $J$ and $U^j_\lambda(y_i)\leq U^j_\lambda(y_j), \forall j\in I$, \eqref{eq20} becomes
	\begin{align*}
		0\geq\sum_{j\in I} \widetilde{d_{ij}}(y_i,u^1_\lambda,\cdots,u^j,\cdots,u^m,0,U^j_\lambda(y_i))U^j_\lambda(y_j).
		\stepcounter{equation}\tag{\theequation}\label{eq21}
	\end{align*}
	
	Next we will prove that $U^j_\lambda(y_j)\leq0$, $\forall j\in I$, which will be contradictory to the choice of $y_i$. Rewrite \eqref{eq21} as
	\begin{align*}
		MU=V,
		\stepcounter{equation}\tag{\theequation}\label{eq22}
	\end{align*}
	where $U:=\left(U^1_\lambda(y_1),\dots,U^l_\lambda(y_l)\right)$, $V:=(v_1,\dots,v_l)$, $M:=(m_{ij})_{i,j=1}^l$, satisfying $$v_i\leq0, m_{ij}:=\widetilde{d_{ij}}(y_i,u^1_\lambda,\cdots,u^j,\cdots,u^m,0,U^j_\lambda(y_i))\leq0,\ i,j=1,\dots,l.$$ 
	We can assume $\lambda<-\widetilde{R}$, where $\widetilde{R}$ as in \cref{lemma:inf2}, then $\Sigma_\lambda\subset\br^n\setminus B_{\widetilde{R}}(0)$, hence by \cref{lemma:inf2} and \cref{lemmaA}, $M$ is invertable, we can then solve $U=M^{-1}V$ by \eqref{eq22}, together with Cramer's law and \cref{lemmaA}, \cref{lemma:inf2}, we have
	$$U^j_\lambda(y_j)=M^{jk}v_k=\frac1{\det M}\sum_{k=1}^l\operatorname{adj}(M)_{jk}v_k\leq0,\ \forall j=1,\dots,l.$$\\
	
	Next we continue to move $T_\lambda$ by increasing $\lambda$, as long as $U^i_\lambda(x)\leq 0$ holds for all $i$ on $\Sigma_{\Lambda}$, we define
	\begin{equation*}
		\Lambda:=\sup\mleft\{\lambda\in\br\ \middle|\ U^i_\mu(x)\leq 0,\ \forall x\in\Sigma_\mu,\ \forall\ i=1,\dots,m,\ \forall\ \mu<\lambda\mright\}.
	\end{equation*}
	Step 1 shows that $\Lambda>-\infty$. On the other hand $\lim\limits_{|x|\to +\infty}u^i(x)=\infty$, for sufficiently large $R$, we have $u^i(x)>u^i(0)$, $\forall|x|>R$, hence $\Lambda<+\infty$, therefore $\Lambda$ is finite. 
	Since all the terms are continuous with respect to $\lambda$, we have 
	\begin{equation}\label{eq:halfstep1.5}
	U^i_\Lambda(x)\leq 0,\ \forall x\in\Sigma_\Lambda,\ \forall\ i=1,\dots,m.
	\end{equation}
	In particular, $U^i_\mu\leq 0, \forall \mu\leq\Lambda$, hence $\forall x\in\Sigma_\Lambda$, $\frac{\partial u^i}{\partial x_1}(x)\leq0$, by covering argument.
 By \eqref{eq:EIH} and \cref{rmk:dijinf}, we have on $\Sigma_\Lambda$,
	\begin{equation}\label{eq:EIHF}
	\operatorname{tr}\left({\bf A^i}(x)D^2U^i_\Lambda(x)\right)+{\bf B^i}(x)\cdot\nabla U^i_\Lambda(x)-\widetilde{d^{ii}}(x)U^i_\Lambda(x)\geq0,\ \forall\ i=1,\dots,m.
	\end{equation}
	
	Since all the coefficient of \eqref{eq:EIHF} are locally bounded, by \cref{lem:SMP-HL}, we have $\forall i=1,\dots,m$, either
	\begin{align*}
		&U^i_\Lambda(x)<0, \hspace{0.1cm}\forall x\in \Sigma_\Lambda,\\
		\text{or}\hspace{0.1cm}&U^i_\Lambda(x)\equiv 0, \hspace{0.1cm}\forall x\in \Sigma_\Lambda.
	\end{align*}
	If the former one happens, we also have
	\begin{align*}
		\frac{\partial U^i_\Lambda}{\partial x_1}(x)>0, \hspace{0.1cm}\forall x\in T_\Lambda.
	\end{align*}
	
	Noting that the Neumann boundary condition shows that $\left.U^i_\Lambda\right|_{P_\Lambda}<0$. In fact, assume $U^i_\Lambda|_{P_\Lambda}=0$, by\cref{lem:SMP-HL} we have
	\begin{align*}
		\frac{\partial U^i_\Lambda}{\partial x_n}(x)<0, \hspace{0.1cm}x\in P_\Lambda,
	\end{align*}
	which is contradictory to \eqref{eqP}. Hence 
	\begin{equation*}
		U^i_\Lambda(x)<0, \hspace{0.1cm}x\in P_\Lambda.
	\end{equation*}
	
	Now, if $\Lambda=0$, we have $\frac{\partial u^i}{\partial x_1}(0,x')=0$ for some $x\in \{x_1=0\}$, hence $U^i_\Lambda(x)\equiv 0, \hspace{0.1cm}\forall x\in \Sigma_\Lambda$.
	
	Next, we discuss the case of $\Lambda<0$.
	
	\textbf{Step 2}: We will prove that $U^i_\Lambda\equiv 0$ on $\Sigma_\Lambda$ for at least one $i$.
	
	As the discussion above, we only need to exclude the case that all the $U^i_\Lambda$ on $\Sigma_\Lambda$ are strictly negative, with directional derivative along $x_1$ being strictly positive on $T_\Lambda$. Assume this is happening.
	
	By definition of $\Lambda$, there exists a sequence of $\{\lambda_k\}_{k=1}^\infty\subset\br$ and a sequence of $\{y_k\}_{k=1}^\infty\subset\br^n_+$, such that $\lambda_k\in(\Lambda, 0)$, satisfying $\lim\limits_{k\to\infty}\lambda_k=\Lambda$, and $y_k\in\Sigma_{\lambda_k}$, such that at least for one $i$, $U^i_{\lambda_k}(y_k)\geq0$. By \cref{lemma:inf1}, we can choose $\{y_k\}\subset\overline{\Sigma_{\lambda_k}}\setminus\Sigma_{\Lambda}$, such that
	\begin{equation}\label{step2 max}
	U^i_{\lambda_k}(y_k)=\max_{y\in\overline{\Sigma_{\lambda_k}}}U^i_{\lambda_k}(y)>0, \hspace{0.1cm}k=1,2,\cdots
	\end{equation}
	There two cases will happen. 

\begin{enumerate}
	\renewcommand{\theenumi}{\textbf{Case \arabic{enumi}}}
	\renewcommand{\labelenumi}{\theenumi}
	\item  the sequence $\{y_k\}_{k=1}^\infty$ has bounded subsequence. 
	
	Without lost of generality, we can assume
	\begin{equation*}
		\lim_{k\to\infty}y_k=y_0\in\bigcap_{k=1}^{+\infty}\overline{\Sigma_{\lambda_k}}\setminus\Sigma_{\Lambda}=\overline{T_\Lambda},
	\end{equation*}
	
	We claim that $\frac{\partial U^i_\Lambda}{\partial x_1}(y_0)>0$.
	
	If $y_0\in T_\Lambda$, then by \cref{lem:SMP-HL} we are done. Next, consider $y_0\in\overline{P_\Lambda}\cap\overline{T_\Lambda}$. In fact, for any $Q\in\overline{P_\Lambda}\cap\overline{T_\Lambda}$, we have $U^i_\Lambda(Q)=0$, hence
	\begin{equation*}
		\frac{\partial U^i_\Lambda}{\partial x_1}(Q)\geq 0.
	\end{equation*}
	Let $s=-{\bf e_1}+{\bf e_n}$, at $Q$, locally we can regarded as $\mleft\{\rho\equiv x_1-\Lambda=0 \mright\}$ intersect with $\mleft\{\sigma\equiv x_n=0\mright\}$, noticing that \eqref{eq:det} and \eqref{eq:aij} shows that $a_{1j}=0,\forall j=2,\dots,n$, and then $a_{ij}\rho_i\sigma_j=0$, hence at $Q$, by using \cref{lemmaS} on \eqref{eq:EIHF}, we have either
	\begin{equation}\label{eq:H1}
	\frac{\partial U^i_\Lambda}{\partial s}(Q)=-\frac{\partial U^i_\Lambda}{\partial x_1}(Q)+\frac{\partial U^i_\Lambda}{\partial x_n}(Q)<0,
	\end{equation}
	or
	\begin{equation}\label{eq:H2}
	\frac{\partial^2 U^i_\Lambda}{\partial s^2}(Q)=\frac{\partial^2 U^i_\Lambda}{\partial x_1^2}(Q)-2\frac{\partial^2 U^i_\Lambda}{\partial x_1\partial x_n}(Q)+\frac{\partial^2 U^i_\Lambda}{\partial x_n^2}(Q)<0. 
	\end{equation}
	
	For \eqref{eq:halfN}, with \ref{H}, by direct calculations showing that
	\begin{align*}
		\frac{\partial^2 U^i_\Lambda}{\partial x_1\partial x_n}(Q)=-2\frac{\Lambda}{|Q|}\frac{\partial h^i}{\partial r}(|Q|)\leq 0, \stepcounter{equation}\tag{\theequation}\label{eq:H3}
	\end{align*}
	Hence by \eqref{nablaonT}, \eqref{HessainonT}, \eqref{eq:H2}, \eqref{eq:H3} , only \eqref{eq:H1} can hold, hence
	\begin{equation}\label{eqH6}
	\frac{\partial U^i_\Lambda}{\partial x_1}(Q)>0.
	\end{equation}
	Let $Q$ be $y_0$, we prove the claim. 
	
	Next, by mean value theorem $$0< U^i_{\lambda_k}(y_k)=u^i(y_k^{\lambda_k})-u^i(y_k)=2\frac{\partial u^i}{\partial x_1}(\xi_k)(\lambda_k-y_{k,1})<0,$$ where $\xi_k$ lies in the segment connecting $y_k$ and $y_k^{\lambda_k}$, noticing that the last strict less than sign can be achieved by sufficiently large $k$ pushing $\xi_k$ near $y_0$ close enough, which is a contradiction.
	
	\item  $\lim_{k\to\infty}|y_k|=+\infty$.  
        In this case, we can choose $k^*>0$ such that $\forall k>k^*$, $|y_k|>\widetilde{R}$, where $\widetilde{R}$ as in \cref{lemma:inf2}. Then for each $y_k, k>k^*$, the argument similar to \textbf{Step 1} will give a contradiction.
 \end{enumerate}
	
	\textbf{Step 3}: We need to prove all the $U^i_\Lambda\equiv 0$ on $\Sigma_\Lambda$.
	
	By \textbf{Step 2},	denote $I:=\mleft\{i\ \middle|\ \left.U_\Lambda^i\right|_{\Sigma_\Lambda}\equiv0\mright\}$, $J:=\{1,\dots,m\}\setminus I$. If $J\neq\emptyset$, by \ref{Ffc}, there exists $i_0\in I, j_0\in J,$ such that
	$$\left|\mleft\{x\in\br^n_+\ \middle|\ f^{i_0} \text{strictly decreasing with respect to} z^{j_0}, \text{as} z^k, k\neq i_0,j_0, p\text{fixed}\mright\}\right|>0,$$ that is there exists$x_0\in\br^n_+$, such that $$\widetilde{d_{i_0j_0}}(x_0,u^1_\Lambda(x_0),\cdots,u^j(x_0),\cdots,u^m(x_0),0,U^{j_0}_\Lambda(x_0))U^{j_0}_\Lambda(x_0)<0.$$
	
	If $x_0\in\Sigma_\Lambda$, then by \cref{lemma:SMPFinf} and \eqref{eq:EIH}, together with $\widetilde{d_{ij}}\leq 0,i\neq j$ in \cref{rmk:dijinf}, and \eqref{eq:halfstep1.5} show that
	\begin{align*}
	0\geq&\sum\limits_{j\neq i_0} \widetilde{d_{i_0j}}U^j_\Lambda=\sum\limits_{j\in J} \widetilde{d_{i_0j}}U^j_\Lambda\\
	\geq&\widetilde{d_{i_0j_0}}U^{j_0}_\Lambda>0,
	\end{align*}
	which is a contradiction.
	
	If $x_0\in\Sigma_\Lambda^\Lambda$, consider the elliptic inequality of $\Sigma_\Lambda^\Lambda$ on $U^i_\Lambda$ should be
	\begin{equation*}
	\begin{split}
	\operatorname{tr}\left({\bf A^i}(x)D^2U^i_\Lambda(x)\right)+&{\bf B^i}(x)\cdot\nabla U^i_\Lambda(x)\\
	\geq&\sum_{j=1}^m -\widetilde{d_{ij}}(x,u^1_\Lambda,\cdots,u^j,\cdots,u^m,\nabla U^i_\Lambda,U^j_\Lambda)U^j_\Lambda.
	\end{split}
	\end{equation*}
	Noting that for $j\in J$, $U^j_\Lambda$ should be strictly positive on $\Sigma_\Lambda^\Lambda$, hence
    \begin{align*}
	0\geq&\sum\limits_{j\neq i_0} -\widetilde{d_{i_0j}}U^j_\Lambda=\sum\limits_{j\in J} -\widetilde{d_{i_0j}}U^j_\Lambda\\
	\geq& -\widetilde{d_{i_0j_0}}U^{j_0}_\Lambda>0,
	\end{align*}
	which figures out a contradiction again. Hence $J=\emptyset$, that is all the $U^i_\Lambda\equiv 0$ hold on $\Sigma_\Lambda$.
\end{prf5}

\section{Unbounded Tube Shape Domains}\label{sec:ubt}
\noindent

Now we turn our attention on the unbounded tubes, any tubes in $\br^n$ can always regarded as $C_{\infty}:=\Omega\times(-\infty,\infty)$ up to rotations and translations, where $\Omega\subset\br^{n-1}$ is a bounded simply connected domain, we assume $\partial\Omega\in C^2$. 

In this case, we mainly consider the following constant-boundary Dirichlet problem for \eqref{eq:MA} on $C_{\infty}$,
\begin{equation}\label{eq:cylinderinf}
\left\{\begin{array}{rl}
\det(D^2 u^i)\ =&f^i(x,{\bf u},\nabla u^i), x\in C_{\infty},\\
u^i\ =&h^i,\hspace{1.8cm}x\in \partial C_{\infty},\\
\lim\limits_{|x|\to +\infty}u^i(x)\ =&\infty, \hspace{1.8cm} i=1,\dots,m.\\
\end{array}
\right.
\end{equation}
where $h^i$ satisfy $\lim\limits_{|x_n|\to +\infty}h^i(x)=\infty.$

The behavior of $u^i$ at the infinity as above are natural since we always assume $u^i$ to be strictly convex.

In this case, we need only a weaken condition instead of \eqref{eq:inf1}:

For $u^i\in C^1(\br^n)$, we denote
 $$\frac{\partial u^i}{\partial |x_n|}(x):=\langle\nabla u^i(x), {\bf e_n}\rangle\langle{\bf e_n}, \frac{x}{|x|}\rangle,$$ $$\nabla_{\tau'}(x):=\nabla u^i(x)-\frac{\partial u^i}{\partial |x_n|}(x)\frac{x}{|x|},$$ We require that
\begin{align*}
	&\lim\limits_{|x_n|\to\infty}\frac{\partial u^i}{\partial |x_n|}(x)>0,\\
	&|\nabla_{\tau'}(x)|=o\left(\frac{\partial u^i}{\partial |x_n|}(x)\right)\ \text{as}\ |x_n|\to\infty.\stepcounter{equation}\tag{\theequation}\label{eq:inf3}
\end{align*}

\vskip 0.2in

Similar to the case of bounded tubes \cite{zhang2024}, when we move the hyperplane $\mleft\{x_1=\lambda\mright\}$ along ${\bf e_1} $  from left negative infinity, denote $\lambda_0:=\inf\mleft\{x_1\ \middle|\ x\in C_\infty \mright\}$. Then we can define similarly the $$\Lambda_0:=\sup\mleft\{\lambda>\lambda_0\ \middle|\  \Sigma_\lambda^\lambda\subset\Omega\mright\},$$ with two probably happened situations, and define $\Lambda_1,\Lambda_2$ respectively.

\vskip 0.2in
\subsection{Main Theorem}

We now state our main theorem for case of unbounded tubes.

When $\Omega$ being convex in one direction, assumed as $\bf e_1$, and the nonlinear term $\bf f$ satisfies some corresponding monotonic conditions in this direction, we can start to examine whether the solution of the system \eqref {eq:cylinderinf} will satisfy the corresponding monotonicity along this direction. The main results are the following.

\begin{thm}\label{thm:cylinderinf}
	Let $C_{\infty}=\Omega\times(-\infty, \infty)$ be a unbounded tubes in $\br^n$, where $\Omega\subset\br^{n-1}$ is a $C^2$ bounded simply connected domain being convex along with ${\bf e_1}$. Assume ${\bf f}$ satisfy \ref{Fc}, \ref{Fziinf}, \ref{Fzj}, \ref{Fp}, \ref{Fanti} and \ref{FH_i}. Let ${\bf u}=\left(u^1,\cdots,u^m\right)\in \left[C^2\left(\overline{C_{\infty}}\right)\right]^m$ be a group of strictly convex solutions to \eqref{eq:cylinderinf} satisfying \eqref{eq:inf2} and \eqref{eq:inf3}, then each $u^i$ on $\mleft\{x\in C_{\infty}\ \middle|\ x_1<\Lambda_0\mright\}$ must be
	$$u^i\left(x_1,x'\right)\geq u^i\left(2\Lambda_0-x_1,x'\right)\ \text{and}\ \frac{\partial u^i}{\partial x_1}(x)<0.$$
	
	Furthermore, if
	\begin{equation}\label{eq:nd=0cylinderinf}
	\frac{\partial u^i}{\partial x_1}\left(\Lambda_0,x'\right)=0 ~for~some~ x\in \left\{x_1=\Lambda_0\right\},
	\end{equation} then such $u^i$ must be symmetric with respect to $\left\{x_1=\Lambda_0\right\}$ and strictly decreasing in ${\bf e_1}$ direction with $x_1<\Lambda_0$, more precisely, 
	\begin{equation*}
	u^i(x)=u^i\left(\left|x_1-\Lambda_0\right|,x'\right),\ \text{in}\ \mleft\{x\in C_{\infty}\ \middle|\ \left|x_1-\Lambda_0\right|<\Lambda_0-\lambda_0\mright\},
	\end{equation*}
	moreover, 
	\begin{equation*}
	\frac{\partial u^i}{\partial x_1}(x)<0,\ \text{in}\ \mleft\{x\in C_{\infty}\ \middle|\ x_1<\Lambda_0\mright\}.
	\end{equation*}
\end{thm}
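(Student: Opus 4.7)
I plan to apply the moving-plane method in the ${\bf e_1}$-direction, sliding the hyperplane $T_\lambda=\{x_1=\lambda\}$ from $\lambda_0$ toward $\Lambda_0$ and proving at each stage that $U^i_\lambda=u^i_\lambda-u^i\leq 0$ on $\Sigma_\lambda$ for every $i$. The general computation of \cref{sec:pre} still applies: on $\Sigma_\lambda$, wherever $\partial_{x_1}u^i\leq 0$, each $U^i_\lambda$ satisfies the coupled elliptic inequality of the type \eqref{eq:EIinf},
\[
\operatorname{tr}\bigl({\bf A^i}\,D^2 U^i_\lambda\bigr)+{\bf B^i}\cdot\nabla U^i_\lambda\ \geq\ \sum_{j=1}^m \widetilde{d_{ij}}\,U^j_\lambda,
\]
with ${\bf A^i}$ strictly positive definite (by strict convexity and \ref{Fc}) and all coefficients locally bounded. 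Since $\Omega$ is convex in ${\bf e_1}$, the reflection carries $\overline{\Sigma_\lambda}\setminus T_\lambda$ into $\overline{C_\infty}$ for every $\lambda\leq\Lambda_0$; on $T_\lambda$ one has $U^i_\lambda=0$ by construction, and on the lateral portion $\partial\Sigma_\lambda\cap\partial C_\infty$ the Dirichlet data $u^i=h^i$ give $U^i_\lambda\leq 0$ by direct comparison.

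With this machinery set up, I would proceed in three steps, following the templates of the proofs of \cref{thm:whole} and \cref{thm:half}. \textbf{Step 1} (start): for $\lambda$ just above $\lambda_0$ the slab $\Sigma_\lambda$ is narrow, and the coupled argument at interior maximizers (where $\nabla U^i_\lambda=0$, $D^2U^i_\lambda\leq 0$), combined with \ref{FH_i}, \cref{lemmaA}, and Cramer's rule as in \textbf{Step 1} of the proof of \cref{thm:whole}, forces $U^i_\lambda\leq 0$ for all $i$. Setting
\[
\Lambda:=\sup\bigl\{\lambda\in(\lambda_0,\Lambda_0]\ :\ U^i_\mu\leq 0\text{ on }\Sigma_\mu\ \forall\,\mu\leq\lambda,\ \forall\,i\bigr\},
\]
the aim becomes $\Lambda=\Lambda_0$. \textbf{Step 2} (continuation): by continuity $U^i_\Lambda\leq 0$ on $\Sigma_\Lambda$ and, by a covering argument, $\partial_{x_1}u^i\leq 0$ there; applying \cref{lem:SMP-HL} componentwise to the reduced inequality (with the self-coupling term $-\widetilde{d_{ii}}U^i_\Lambda$ absorbed) yields, for each $i$, either the strict alternative $U^i_\Lambda<0$ in $\Sigma_\Lambda$ with $\partial_{x_1}U^i_\Lambda>0$ on $T_\Lambda$, or the identity $U^i_\Lambda\equiv 0$. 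Assume for contradiction $\Lambda<\Lambda_0$ under the strict alternative, pick $\lambda_k\downarrow\Lambda$ and maximizers $y_k\in\overline{\Sigma_{\lambda_k}}$ with $U^i_{\lambda_k}(y_k)>0$. Since $\Omega$ is bounded, two cases arise. If $\{y_k\}$ is bounded and $y_k\to y_0\in\overline{T_\Lambda}$, then for $y_0\in T_\Lambda\cap C_\infty$ the mean-value identity
\[
U^i_{\lambda_k}(y_k)=2(\lambda_k-y_{k,1})\,\partial_{x_1}u^i(\xi_k)
\]
contradicts $\partial_{x_1}U^i_\Lambda(y_0)>0$, while for $y_0$ on the corner $\overline{T_\Lambda}\cap\partial C_\infty$ an application of \cref{lemmaS} with $\rho=x_1-\Lambda$ and $\sigma$ a defining function of $\partial C_\infty$, using the explicit structure of $D^2U^i_\Lambda$ on $T_\Lambda$ given by \eqref{HessainonT}, yields the same contradiction. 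If instead $|y_{k,n}|\to\infty$ (the only other escape, $\Omega$ being bounded), then the tube-adapted assumptions \eqref{eq:inf3} and \eqref{eq:inf2} play the role of \cref{lemma:inf1} to guarantee that the maxima are attained interiorly, while \ref{FH_i} together with \cref{lemma:inf2}, \cref{lemmaA}, and Cramer's rule force $U^i_{\lambda_k}(y_k)\leq 0$, a final contradiction.

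\textbf{Step 3}: once $\Lambda=\Lambda_0$ is established, the asserted monotonicity $\partial_{x_1}u^i<0$ on $\{x_1<\Lambda_0\}$ follows from the strong maximum principle applied to the linearized equation for $\partial_{x_1}u^i$. If \eqref{eq:nd=0cylinderinf} holds for some $i$, the Hopf alternative $\partial_{x_1}U^i_{\Lambda_0}>0$ on $T_{\Lambda_0}$ is impossible for that $i$, which forces $U^i_{\Lambda_0}\equiv 0$ on $\Sigma_{\Lambda_0}$, i.e., the claimed symmetry across $\{x_1=\Lambda_0\}$ on the strip $\{|x_1-\Lambda_0|<\Lambda_0-\lambda_0\}$. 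The main obstacle of this plan is the corner branch of \textbf{Step 2}: because the cross-coefficients $a^i_{1k}$ of ${\bf A^i}$ restricted to $T_\Lambda$ do not a priori satisfy strict positivity in \eqref{eq:SL}, one lands in the equality case and must verify the tangential-vanishing hypothesis of \cref{lemmaS} from the explicit form of \eqref{eq:aij} combined with \eqref{HessainonT} and the compatibility of $h^i$ with the reflection. A secondary technical point is keeping uniform control of the coupling matrix $(\widetilde{d_{ij}})$ at infinity in the unbounded-sequence case, for which \ref{FH_i} and \cref{lemma:inf2} are tailor-made.
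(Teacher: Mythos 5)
Your plan follows the same moving-plane architecture as the paper's proof of \cref{thm:cylinderinf}: the same elliptic inequality \eqref{eq:EIinf} with positive-definite ${\bf A^i}$ from \eqref{eq:aij}, the same reliance on \cref{lem:SMP-HL}, \cref{lemmaA}, \cref{lemma:inf2} and the tube-adapted limit lemma (\cref{lemma:cylinderinf1}), the same three-step structure, and the same dichotomy into a bounded subsequence and an escaping subsequence in \textbf{Step 2}. The overall skeleton is sound, and your \textbf{Step 2} corner branch (invoking \cref{lemmaS} with $\rho=x_1-\Lambda$, $\sigma$ a defining function of $\partial C_\infty$, and noting that $a^i_{1k}$ vanishes on $T_\Lambda$ so one lands in the equality case of \eqref{eq:SL}) is actually \emph{more} careful than what the paper writes for the cylinder: the paper's Case~1 simply concludes $x^*\in T_\Lambda$ and applies the interior Hopf alternative, without separating out $x^*\in\overline{T_\Lambda}\cap\partial C_\infty$. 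You have correctly anticipated the analogue of what the paper does only in the half-space setting.

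The genuine gap is in your \textbf{Step 1}. You claim that, for $\lambda$ slightly above $\lambda_0$, the interior-maximizer/Cramer argument of \textbf{Step 1} of \cref{thm:whole} ``forces $U^i_\lambda\leq 0$.'' But that argument relies entirely on \cref{lemma:inf2}, which guarantees positivity of the ordered minors $\widetilde{D_i}$ only for $|x|>\widetilde{R}$. In the whole-space proof this is harmless, because one may take $\lambda<-\widetilde{R}$, forcing $\Sigma_\lambda\subset\br^n\setminus B_{\widetilde{R}}$. In the tube, however, $\Omega$ is bounded, so \emph{every} slab $\Sigma_\lambda$, $\lambda>\lambda_0$, contains points with $|x|\leq\widetilde{R}$, and the maximizer $y_i$ may very well sit there, where \ref{FH_i} and \cref{lemma:inf2} give nothing about $\operatorname{sgn}\det M$. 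The paper handles this by splitting $\Sigma_\lambda$ into $\Sigma_\lambda^{\widetilde{R}}=\Sigma_\lambda\cap\{|x|\leq\widetilde{R}\}$ and its complement: on the compact set $\{x_1=\lambda_0\}\cap\partial C_\infty\cap\{|x|\leq\widetilde{R}\}$, a Hopf-type argument on each cross-sectional slice gives $\partial_{x_1}u^i<0$, which by continuity of $\nabla u^i$ up to the boundary and a finite covering extends to a one-sided neighborhood $C_\infty^\epsilon$; for $\lambda$ close enough to $\lambda_0$, $\Sigma_\lambda^{\widetilde{R}}\cup(\Sigma_\lambda^{\widetilde{R}})^\lambda\subset C_\infty^\epsilon$, and then $U^i_\mu(x)=\int_{x_1}^{2\mu-x_1}\partial_{x_1}u^i(s,x')\,ds<0$ on the bounded part. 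Your Cramer argument is only used for the unbounded part $\Sigma_\lambda^{\widetilde{R}^c}$. The ``narrowness'' you invoke is not in itself enough to rescue the missing piece: narrowness of the slab does not control the sign structure of the coupling matrix $\widetilde{D}$ at finite $|x|$, and no narrow-domain maximum principle for the coupled system is developed in the paper. You need the boundary Hopf argument as an independent ingredient.

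A secondary remark: in \textbf{Step 3} you propose to differentiate the equation and use the strong maximum principle ``applied to the linearized equation for $\partial_{x_1}u^i$''; the paper instead deduces $\partial_{x_1}u^i<0$ on $\{x_1<\Lambda_0\}$ by applying \cref{lemma:SMPFinf} on each $\Sigma_\lambda$ with $\lambda<\Lambda_0$ together with a covering argument, reading the strict negativity off the Hopf inequality for $U^i_\lambda$ on $T_\lambda$. Differentiating the equation would reintroduce a $C^1$ requirement on $f^i$ that the paper is explicitly trying to weaken to Lipschitz via \ref{Fp}, so the covering route is preferable.
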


\vskip 0.2in

If we assume more symmetry condition on $\Omega$ and ${\bf f}$ (substituting \ref{Fanti} with \ref{Fsym1}) to satisfy \eqref{eq:nd=0cylinderinf}, we can furthermore immediately have the following, by using \cref{thm:cylinderinf} again with ${\bf u_{\Lambda_0}}:=(u^i_{\Lambda_0})$. (Noting that in this case, the inequalities \eqref{eq:anti},\eqref{eq:splitinf} will be slightly different to obtain the same result.)

\begin{thm}
	Let $C_{\infty}=\Omega\times(-\infty, \infty)$ be a unbounded tubes in $\br^n$, where $\Omega\subset\br^{n-1}$ is a $C^2$ bounded simply connected domain being convex along with ${\bf e_1}$. Assume ${\bf f}$ satisfy \ref{Fc}, \ref{Fziinf}, \ref{Fzj}, \ref{Fp}, \ref{Fsym1} and \ref{FH_i}. Let ${\bf u}=\left(u^1,\cdots,u^m\right)\in \left[C^2\left(\overline{C_{\infty}}\right)\right]^m$ be a group of strictly convex solutions to \eqref{eq:cylinderinf} satisfying \eqref{eq:inf2} and \eqref{eq:inf3}, then each $u^i$ must be symmetric with respect to $\left\{x_1=\Lambda_0\right\}$, and strictly decreasing in $\left\{x_1<\Lambda_0\right\}$ in ${\bf e_1}$ direction with $x_1<\Lambda_0$.
	
	More precisely, for all $i=1,\dots, m$, 
	\begin{equation*}
	u^i(x)=u^i\left(\left|x_1-\Lambda_0\right|,x'\right),\ \text{in}\ \mleft\{x\in C_{\infty}\ \middle|\ \left|x_1-\Lambda_0\right|<\Lambda_0-\lambda_0\mright\},
	\end{equation*}
	moreover, 
	\begin{equation*}
	\frac{\partial u^i}{\partial x_1}(x)<0,\ \text{in}\ \mleft\{x\in C_{\infty}\ \middle|\ x_1<\Lambda_0\mright\}.
	\end{equation*}
\end{thm}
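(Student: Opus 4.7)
The plan is to apply \cref{thm:cylinderinf} twice: once directly to $\bf u$, and once to its reflection across $\{x_1=0\}$. The hypothesis \ref{Fsym1} is only self-consistent when $\Omega$ itself is symmetric about $\{x_1=0\}$ (otherwise the defining identity for $f^i$ would fail on part of $\Omega$), in which case the geometric quantity $\Lambda_0$ of \cref{sec:ubt} is precisely $0$. Moreover, \ref{Fsym1} readily implies \ref{Fanti} (take $y_1=-x_1$ to turn the inequality in \ref{Fanti} into an equality). Thus \cref{thm:cylinderinf} applies directly to $\bf u$ and yields, for every $i=1,\dots,m$,
\begin{equation*}
u^i(x_1,x')\ \geq\ u^i(-x_1,x'),\qquad \frac{\partial u^i}{\partial x_1}(x)<0,\qquad \forall\, x\in C_\infty\text{ with } x_1<0.
\end{equation*}

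The next step is to introduce the reflected function $\tilde u^i(x):=u^i(-x_1,x')$ and check that $\tilde{\bf u}=(\tilde u^1,\dots,\tilde u^m)$ is again a strictly convex $C^2$ solution of \eqref{eq:cylinderinf} on $C_\infty$ that satisfies all the hypotheses of \cref{thm:cylinderinf}. The Monge--Amp\`ere left-hand side is preserved (as in \eqref{eq:det}), the boundary datum $h^i$ is invariant under the reflection by \ref{Fsym1} restricted to $\partial C_\infty$, and the identity $f^i(x,\tilde{\bf u}(x),\nabla\tilde u^i(x))=f^i(-x_1,x',{\bf u}(-x_1,x'),\nabla u^i(-x_1,x'))$ is exactly \ref{Fsym1}; the growth conditions \eqref{eq:inf2} and \eqref{eq:inf3} are preserved by reflection, and the intrinsic hypotheses \ref{Fc}, \ref{Fziinf}, \ref{Fzj}, \ref{Fp}, \ref{FH_i} on $f^i$ carry over automatically. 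Applying \cref{thm:cylinderinf} to $\tilde{\bf u}$ then produces $\tilde u^i(x_1,x')\geq\tilde u^i(-x_1,x')$ for $x_1<0$, which unfolds to $u^i(-x_1,x')\geq u^i(x_1,x')$ for $x_1<0$. Sandwiching the two inequalities forces $u^i(x_1,x')=u^i(-x_1,x')$ on $\{x_1<0\}$, which is the asserted symmetry about $\{x_1=\Lambda_0\}$; the strict monotonicity on $\{x_1<\Lambda_0\}$ is inherited from the first application. Note that this argument bypasses the conditional hypothesis \eqref{eq:nd=0cylinderinf} of \cref{thm:cylinderinf}, since the reverse inequality comes from a second moving-plane argument rather than from that case distinction.

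The main technical obstacle I expect is the bookkeeping inside the second application of \cref{thm:cylinderinf}, specifically re-deriving the intermediate inequalities \eqref{eq:anti} and \eqref{eq:splitinf} for $\tilde{\bf u}$. As the authors hint parenthetically, these relied on \ref{Fanti} in the original proof and must now be obtained from the equality embedded in \ref{Fsym1}; after tracking the sign flips carefully (the gradient component $\partial/\partial x_1$ and the mixed Hessian entries $\partial^2/\partial x_1\partial x_j$ change sign under reflection, while the nonlinearity stays invariant), the elliptic inequality \eqref{eq:EIinf} retains its structural form and the moving-plane argument in Steps~1--3 of the proof of \cref{thm:cylinderinf} goes through essentially verbatim for $\tilde{\bf u}$.
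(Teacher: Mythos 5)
Your overall plan — apply \cref{thm:cylinderinf} to $\mathbf{u}$ and then to the reflected family $\tilde{\mathbf{u}}=\mathbf{u}_{\Lambda_0}$ and sandwich the two resulting inequalities — is exactly the route the paper sketches, and the observation that \ref{Fsym1} forces $\Omega$ to be symmetric about $\{x_1=0\}$ (hence $\Lambda_0=0$) is correct. However, the crucial step in your argument, ``\ref{Fsym1} readily implies \ref{Fanti} (take $y_1=-x_1$)'', is false. Condition \ref{Fanti} demands the inequality $f^i(y_1,x',\mathbf{z},\bar p)\geq f^i(x_1,x',\mathbf{z},p)$ for \emph{every} $y_1\in[x_1,-x_1]$, and the moving-plane proof of \cref{thm:cylinderinf} genuinely uses it at the interior value $y_1=(x_\lambda)_1=2\lambda-x_1$, not only at the endpoint; see \eqref{eq:anti}. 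Evenness alone does not give this: take $f^i(x,\mathbf{z},p)=1+x_1^2$, which satisfies \ref{Fsym1}, but for which $f^i(y_1,\cdot)\geq f^i(x_1,\cdot)$ fails whenever $|y_1|<|x_1|$. More concretely, after applying \ref{Fsym1} to both sides of \eqref{eq:anti} you are comparing $f^i$ at the abscissae $|(x_\lambda)_1|$ and $|x_1|$ with $|(x_\lambda)_1|<|x_1|$, and the desired inequality would require $f^i$ to be non-increasing in $|x_1|$, which \ref{Fsym1} does not provide.

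Consequently your first application of \cref{thm:cylinderinf} to $\mathbf{u}$ is not justified as written, and neither is the second one to $\tilde{\mathbf{u}}$, because the same missing monotonicity reappears there. You identify the need to ``re-derive \eqref{eq:anti} and \eqref{eq:splitinf}'' as the ``main technical obstacle'' but then assert the argument ``goes through essentially verbatim''; that is exactly the step that cannot be taken on the strength of \ref{Fsym1} alone, and it is the content of the paper's parenthetical caveat. A secondary issue: you justify that $\tilde{\mathbf{u}}$ solves \eqref{eq:cylinderinf} by saying ``$h^i$ is invariant under the reflection by \ref{Fsym1}'', but \ref{Fsym1} is a hypothesis on $f^i$, not on the boundary datum; symmetry of $h^i$ in $x_1$ is a separate (implicit) requirement for the reflected problem to carry the same boundary values.
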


\vskip 0.2in

Especially, when $\Omega$ is a ball, that is $C_\infty$ being a infinite cylinder, if we substitute \ref{Fanti} to the symmetric ones \ref{Fsymall}, then by using \cref{thm:cylinderinf} respect to all directions in $\br^n$, we have immediately that

\begin{coro}\label{thm:cylinderinfball}
	Let $C_{\infty}=\Omega\times(-\infty, \infty)$ be a infinite cylinder in $\br^n$, $\Omega=B_R$ be a arbitrary ball with radius $R$ in $\br^{n-1}$. Assume ${\bf f}$ satisfy \ref{F0}, \ref{Fziinf}, \ref{Fzj}, \ref{Fp}, \ref{Fsymall} and \ref{FH_i}. Let ${\bf u}=\left(u^1,\cdots,u^m\right)\in \left[C^2\left(\overline{C_{\infty}}\right)\right]^m$ be a group of strictly convex solutions to \eqref{eq:cylinderinf} satisfying \eqref{eq:inf2} and \eqref{eq:inf3}, then each must be radially symmetric and strictly increasing respect to the axis crossing the center of $B_R$.
	
	More precisely, denote the center of $B_R$ as $x^*=\left((x^*)',x^*_n\right)\in\br^n$, and denote $r=|x'-(x^*)'|$, then for $i=1\dots,m$, each $u^i$ must be
	\begin{equation*}
	u^i(x)=u^i(r,x_n),\ x\in C_{\infty},
	\end{equation*}
	moreover, 
	\begin{equation*}
	\frac{\partial u^i}{\partial r}(r,x_n)>0,\ x\in C_{\infty}.
	\end{equation*}
\end{coro}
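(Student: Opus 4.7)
The plan is to invoke the symmetric variant of \cref{thm:cylinderinf} --- the unnumbered theorem just above, which under hypothesis \ref{Fsym1} upgrades monotonicity to mirror symmetry --- in every direction perpendicular to the cylinder axis, and then assemble the resulting reflection symmetries into rotational invariance about that axis. Concretely, fix any unit vector $\nu \in \br^{n-1}\times\{0\}$. Since $\Omega = B_R$ is convex and symmetric under reflection in direction $\nu$ about its center $(x^*)'$, and since \ref{Fsymall} specializes to \ref{Fsym1} along $\nu$ (orthogonal invariance of $f^i$ in both $x$ and $p$ includes the reflection through the hyperplane $T_{\nu\cdot x^*,\,\nu}$), the hypotheses of that symmetric theorem are met with ${\bf e_1}$ replaced by $\nu$. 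The growth conditions \eqref{eq:inf2} and \eqref{eq:inf3} are phrased in rotationally-invariant quantities in the cross-sectional variable $x'$ and so transfer verbatim to any horizontal $\nu$.

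Applying the theorem for each $\nu \in S^{n-2}$ yields that every $u^i$ is symmetric across the vertical hyperplane through the axis orthogonal to $\nu$, and that $\partial u^i/\partial\nu(x)<0$ whenever $\nu\cdot x < \nu\cdot x^*$. The group generated by all such reflections is $O(n-1)$ acting on $x'$ about $(x^*)'$ and fixing $x_n$; invariance under this group is equivalent to $u^i(x)=u^i(r,x_n)$ with $r = |x'-(x^*)'|$. The strict radial monotonicity $\partial u^i/\partial r(x)>0$ at any $x$ with $r>0$ then follows by choosing $\nu = -(x'-(x^*)')/r$, for which $\nu\cdot x < \nu\cdot x^*$ and $\partial u^i/\partial\nu = -\partial u^i/\partial r$.

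The main obstacle I anticipate is not analytic but organizational: one must verify that the critical hyperplanes $T_{\Lambda_0(\nu),\nu}$ produced by the moving-plane argument in every direction $\nu$ all pass through the single common axis $\{(x^*)'\}\times\br$, so that the collected symmetries really generate the group $O(n-1)$ about $(x^*)'$ rather than independent reflections across unrelated hyperplanes. This is what pins down the rotational center to coincide with the center of $B_R$. It holds because, in the notation of \cref{sec:ubt}, $\Lambda_0(\nu)$ is the supremum of $\lambda$ for which the reflected set $\Sigma_\lambda^\lambda$ remains inside $\Omega = B_R$, and for a ball this supremum is exactly $\nu\cdot(x^*)'$. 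A subsidiary consistency issue is that the Dirichlet datum $h^i$ must itself be a function of $(|x'-(x^*)'|, x_n)$ on $\partial C_\infty$; this is implicit in the corollary, since a non-radial $h^i$ could never admit a radially symmetric $u^i$.
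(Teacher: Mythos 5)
Your proof is correct and follows essentially the same route as the paper: apply the reflection-symmetry version of \cref{thm:cylinderinf} (equivalently, \cref{thm:cylinderinf} plus the \ref{Fsym1} upgrade) in every horizontal direction $\nu$, then combine the resulting mirror symmetries to obtain invariance under $O(n-1)$ acting on $x'$. Your observation that $\Lambda_0(\nu)=\nu\cdot(x^*)'$ for a ball, so that all critical hyperplanes pass through the common axis and the reflections really do generate a rotation group about it, spells out precisely the step the paper dismisses as ``immediate.''
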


\vskip 0.2in
\subsection{Proof of \cref{thm:cylinderinf}}

By the same procedure in calculating \eqref{eq:EIinf}, we can obtain an elliptic inequality of $U^i_\lambda$ in $\Sigma_\lambda$:
$\forall x\in\Sigma_\lambda$ such that $\frac{\partial u^i}{\partial x_1}(x)\leq0$, we have
\begin{equation}\label{eq:EIC}
\begin{split}
\operatorname{tr}\left({\bf A^i}(x)D^2U^i_\Lambda(x)\right)+&{\bf B^i}(x)\cdot\nabla U^i_\lambda(x)\\
\geq&\sum_{j=1}^m \widetilde{d_{ij}}(x,u^1_\lambda,\cdots,u^j,\cdots,u^m,\nabla U^i_\lambda,U^j_\lambda)U^j_\lambda,
\end{split}
\end{equation}
where ${\bf A^i}(x):=\left(a^i_{jk}(x)\right)_{j,k=1}^n$,
\begin{equation*}
a^i_{jk}(x):=\int_0^1\det\left((1-t)D^2u^i_\lambda(x)+tD^2u^i(x)\right)\left((1-t)D^2u^i_\lambda(x)+tD^2u^i(x)\right)^{jk}dt,
\end{equation*}
are strictly positive definite due to the strictly convexity of $u^i$ and \ref{F0}, and together with $${\bf B^i(x)}:=\frac{h_{f^i,p}}{|\nabla U^i_\lambda(x))|}\chi_{\{|\nabla U^i_\lambda(x))|\neq0\}}\nabla U^i_\lambda(x)$$ are all locally bounded due to the twice differentiable continuity of $u^i$.

\vskip 0.2in

Next we prove a preliminary lemma.

\begin{lemma}\label{lemma:cylinderinf1}
	Let ${\bf u}=(u^1,\cdots,u^m)$ satisfy \eqref{eq:inf2} and \eqref{eq:inf3}, then for all $\lambda<0$, 
	$$\lim\limits_{|x_n|\to\infty,x_1<\lambda}U^i_\lambda(x)\leq0, \forall i=1,\dots,m.$$ 
\end{lemma}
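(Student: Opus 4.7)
The plan is to adapt the argument of \cref{lemma:inf1}, exploiting a geometric simplification unique to the unbounded tube: since $\Omega\subset\br^{n-1}$ is bounded, the coordinate $x_1$ lies in a bounded interval for every $x\in C_\infty$, so we no longer have to handle the very-negative-$x_1$ regime that required the radial hypothesis \eqref{eq:inf1} and the reflection hypothesis \eqref{eq:inf2} in the whole-space case. As a consequence, only \eqref{eq:inf3}, the asymptotics in the $x_n$ direction, will genuinely be used; \eqref{eq:inf2} is carried along in the hypothesis list but is essentially vacuous for $\nu={\bf e_1}$ here, because $x\cdot{\bf e_1}\to-\infty$ never occurs inside $C_\infty$.

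Fix $\lambda<0$, taken large enough that $\{x_1<\lambda\}\cap C_\infty$ is nonempty. For $x=(x_1,x_2,\dots,x_{n-1},x_n)\in C_\infty$ with $x_1<\lambda$, the mean value theorem applied along ${\bf e_1}$ furnishes some $\xi$ between $x_1$ and $2\lambda-x_1$ with
\begin{equation*}
U^i_\lambda(x) \;=\; 2(\lambda-x_1)\,\frac{\partial u^i}{\partial x_1}(\xi,x_2,\dots,x_{n-1},x_n).
\end{equation*}
Because $\Omega$ is bounded, both $\xi$ and the prefactor $2(\lambda-x_1)$ remain bounded uniformly as $|x_n|\to\infty$, so it suffices to show that $\partial u^i/\partial x_1(\xi,x_2,\dots,x_{n-1},x_n)\to 0$ in this limit.

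Writing $y=(\xi,x_2,\dots,x_{n-1},x_n)$, the definition of $\nabla_{\tau'}$ from \eqref{eq:inf3} yields the decomposition
\begin{equation*}
\nabla u^i(y) \;=\; \nabla_{\tau'}(y) \;+\; \frac{\partial u^i}{\partial|y_n|}(y)\,\frac{y}{|y|},
\end{equation*}
so that $\partial u^i/\partial x_1(y)=\langle\nabla_{\tau'}(y),{\bf e_1}\rangle+(\partial u^i/\partial|y_n|)(y)\cdot\xi/|y|$. As $|x_n|\to\infty$ with the transversal coordinates bounded, we have $|y|\to\infty$ and $\xi/|y|\to 0$; meanwhile, \eqref{eq:inf3} says $\partial u^i/\partial|y_n|$ tends to a finite positive limit, which both kills the second term and, since $|\nabla_{\tau'}(y)|=o(\partial u^i/\partial|y_n|)$, drives the first term to zero as well. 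Therefore $U^i_\lambda(x)\to 0\le 0$, proving the claim. The only thing to check carefully is that the smallness in \eqref{eq:inf3} is uniform in the bounded transversal variables $(\xi,x_2,\dots,x_{n-1})$, which is automatic here because $|y|\to\infty$ is equivalent to $|x_n|\to\infty$ once those coordinates are bounded. I do not expect this lemma itself to be the main obstacle; the genuine difficulty in the section will appear in the moving-plane step, where Serrin's corner lemma has to be applied along the edge where $T_\lambda$ meets $\partial C_\infty$.
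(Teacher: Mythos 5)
Your proof is correct, and it is genuinely cleaner than the paper's. The paper's argument for \cref{lemma:cylinderinf1} is essentially copied from the whole-space \cref{lemma:inf1}: it splits into the two regimes $x_1<2\lambda$ and $2\lambda<x_1<\lambda$, inserting the intermediate value $u^i(-x_1,x')$ in the first regime precisely so that the reflection hypothesis \eqref{eq:inf2} can be invoked to control $u^i(-x_1,x')-u^i(x)$. You observe correctly that in a tube this detour is unnecessary: $x_1$ ranges over a bounded interval, so for $|\lambda|$ large the branch $x_1<2\lambda$ is empty, \eqref{eq:inf2} with $\nu={\bf e_1}$ is vacuous, and a single application of the mean value theorem together with the decomposition $\nabla u^i=\nabla_{\tau'}+\frac{\partial u^i}{\partial|x_n|}\frac{x}{|x|}$ gives $\partial u^i/\partial x_1\to 0$ directly, hence $U^i_\lambda\to 0$. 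This is in fact slightly stronger than the stated $\limsup U^i_\lambda\le 0$, and your remark that the smallness is automatically uniform because $|y|\to\infty\iff|x_n|\to\infty$ once the transversal coordinates are confined to $\Omega$ is exactly the right thing to check.

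One caveat worth being aware of, which affects the paper's proof equally and is therefore not a flaw of yours relative to it: you read the first line of \eqref{eq:inf3} as saying that $\lim_{|x_n|\to\infty}\partial u^i/\partial|x_n|$ is a \emph{finite} positive number, and both "kills the second term" and "drives the first term to zero" depend on that finiteness. If that limit were allowed to be $+\infty$ (which is the more natural behavior for a strictly convex $u^i$ with $u^i\to\infty$), then $\partial u^i/\partial x_1=\frac{\partial u^i}{\partial|x_n|}\bigl(\xi/|y|+o(1)\bigr)$ is an indeterminate product and the conclusion would not follow from the argument as given. The paper's own computation has the same issue, masked by the notation: in the tube case the leading term $\frac{2t(\lambda-x_1)+x_1}{|x|}$ is itself $O(1/|x_n|)$ and is swamped by the $o(1)$ from \eqref{eq:inf3}, so the sign conclusion in the final display of the paper's proof also implicitly uses finiteness of the limit. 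So your explicit statement of the finite-limit reading is a feature, not a bug — but it is worth flagging, in both places, that \eqref{eq:inf1} and \eqref{eq:inf3} are being interpreted as imposing at most linear growth on $u^i$ at infinity.
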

\begin{proof}
	By \eqref{eq:inf3}
	\begin{align*}
		\frac{\partial u^i}{\partial x_1}&=\frac{\partial u^i}{\partial |x_n|}\frac{x_1}{|x|}+\left(\nabla u^i-\frac{\partial u^i}{\partial |x_n|}\frac{x}{|x|}\right)\cdot{\bf e_1}\\
		&=\frac{\partial u^i}{\partial |x_n|}\left(\frac{x_1}{|x|}+o(1)\right),\ \text{as}\ |x_n|\to\infty,\ \forall\ i=1,\dots,m,
	\end{align*}
	hence for all $\lambda<0$,
	\begin{align*} U^i_\lambda(x)&=u^i(2\lambda-x_1,x')-u^i(x)\\
		&=u^i(2\lambda-x_1,x')-u^i(-x_1,x')+u^i(-x_1,x')-u^i(x)\\
		&=2\lambda\int_0^1\frac{\partial u^i}{\partial x_1}\left(2(1-t)\lambda-x_1,x'\right)dt+u^i(-x_1,x')-u^i(x)\\
		&=2\lambda\int_0^1\frac{\partial u^i}{\partial |x_n|}\left(\frac{2(1-t)\lambda-x_1}{|x|}+o(1)\right)dt+u^i(-x_1,x')-u^i(x),\ \text{as}\ |x_n|\to\infty,
	\end{align*}
	Noticing that $\frac{\partial u^i}{\partial |x_n|}>0$, as $|x_n|\to\infty$, hence for $x_1$ sufficiently small such that $x_1<2\lambda$, by \eqref{eq:inf2}, we have$$\lim\limits_{|x_n|\to\infty,x_1<2\lambda}U^i_\lambda(x)\leq0.$$ 
	On the other hand, for $\lambda<0$,
	\begin{align*} U^i_\lambda(x)&=u^i(2\lambda-x_1,x')-u^i(x_1,x')\\
		&=2(\lambda-x_1)\int_0^1\frac{\partial u^i}{\partial x_1}\left(2t(\lambda-x_1)+x_1,x'\right)dt\\
		&=2(\lambda-x_1)\int_0^1\frac{\partial u^i}{\partial |x_n|}\left(\frac{2t(\lambda-x_1)+x_1}{|x|}+o(1)\right)dt,\ \text{as}\ |x_n|\to\infty,
	\end{align*}
	Noticing that $\frac{\partial u^i}{\partial |x_n|}>0$, as $|x_n|\to\infty$, hence for $x_1$ sufficiently small such that $2\lambda<x_1<\lambda$, we have
	$$\lim\limits_{|x_n|\to\infty,2\lambda<x_1<\lambda}U^i_\lambda(x)\leq0.$$ 
\end{proof}

\begin{rmk}
This lemma is in order to ensure the super-mum of $U_\lambda^i$ on $\Sigma_\lambda$ can be achieved inside $\Sigma_\lambda$, not just a sequence of maximizers along $x_n$ being infinity. This version is the axis symmetric case, in fact we can weaken the condition \eqref{eq:inf2} if we only consider one direction.
\end{rmk}

\vskip 0.2in

We are now in a position to prove \cref{thm:cylinderinf}. 

\newenvironment{prf6}{{\noindent\bf Proof of \cref{thm:cylinderinf}.}}{\hfill $\square$\par}
\begin{prf6}
We will apply the moving plane method in three steps.\\

{\bf Step 1}: there exist a real number $\lambda\in\br$ such that $\left.U^i_\mu\right|_{\Sigma_\mu}\leq0, \forall\ i=1,\dots,m, \forall\ \mu<\lambda$.
	
	Firstly, consider $\Sigma_\lambda^{\widetilde{R}}:=\Sigma_\lambda\cap\{|x|\leq \widetilde{R}\}$, where $\widetilde{R}$ is given by \cref{lemma:inf2}. Without lost of generality, we can assume $\partial\Sigma_\lambda^{\widetilde{R}}\cap C_{\infty}$ being $C^2$.(If not, we can generate a larger $C^2$ surface along with the boundary covering $\Sigma_\lambda^{\widetilde{R}}$.)

	For any $x\in\{x_1=\lambda_0\}\cap\partial C_{\infty}$, by maximum principle, locally we have $\frac{\partial u^i}{\partial x_1}(x)<0$, hence by the continuity of $\nabla u^i$ up to the boundary, for sufficiently small $\epsilon_x>0$, we must have $$\frac{\partial u^i}{\partial x_1}<0, x\in  C_{\infty}\cap B_{\epsilon_x}(x),\ \forall i=1,\dots,m.$$ 
	 Noting that $\{x_1=\lambda_0\}\cap\partial C_{\infty}$ is a compact set, hence we must have finite cover of it by $\{B_{\epsilon_x}(x)\}$, denoted as $\{B_{\epsilon_{x_i}}(x_i)\}_{i=1}^K$. 
	
	Now denote $$ C_{\infty}^\epsilon:= C_{\infty}\cap\bigcup\limits_{i=1}^K B_{\epsilon_{x_i}}(x_i),$$ we have $$\frac{\partial u^i}{\partial x_1}(x)<0,\ x\in  C_{\infty}^\epsilon,\ \forall\ i=1,\dots,m, $$ hence when $\lambda$ sufficiently close to $\lambda_0$ (such that $\Sigma_\lambda\cup\Sigma_\lambda^\lambda\subset  C_{\infty}^\epsilon$, fix such $\lambda$, denote as $\lambda'$, For any $\mu\in(\lambda_0,\lambda')$, we have $$U^i_\mu(x)=\int_{x_1}^{2\mu-x_1}\frac{\partial u^i}{\partial x_1}(s,x')ds<0,\ x\in \Sigma_\mu.$$
	
    Consider $\Sigma_\lambda^{\widetilde{R}^c}:=\Sigma_\lambda\setminus\{|x|\leq \widetilde{R}\}$, we assume that the goal of the step is not true, that is, assume $\forall \lambda<0$, exists $y_0\in\Sigma_\lambda^{\widetilde{R}^c}$ such that for some $i_0$, $U^{i_0}_\lambda(y_0)>0$. We denote the index set $J:=\{j=1,\dots,m\ |\ U^j_\lambda(x)\leq0,\ \forall x\in\Sigma_\lambda^{\widetilde{R}^c}\}\subsetneqq\{1,\dots,m\}$, and $I:=\{1,\dots,m\}\setminus J.$ Without lost of generality, we can assume $I=\{1,\dots,l\},\ J=\{l+1,\dots,m\},\ l=|I|$. 
	
	Noticing that $\sup\limits_{\Sigma_\lambda}U^i_\lambda>0$, $\forall\ i\in I$. By \cref{lemma:cylinderinf1}, we can choose $\{y_i\}_{i\in I}\subset\overline{\Sigma_\lambda^{\widetilde{R}^c}}$ such that
	\begin{equation*}
	U^i_\lambda(y_i)=\max_{y\in\overline{\Sigma_\lambda^{\widetilde{R}^c}}}U^i_\lambda(y)>0.
	\end{equation*}
	For fixed $i\in I$, by $\left.U^i_\lambda\right|_{T_{\lambda}}=0$, $y_i\in\Sigma_\Lambda^{\widetilde{R}^c}$, hence $\nabla U^i_\lambda(y_i)=0$, and $D^2 U^i_\lambda(y_i)\leq 0$. Therefore the $i$-th equation in \eqref{eq:EIC} at $y_i$ becomes
	\begin{align*}
	0\geq\sum_{j=1}^m \widetilde{d_{ij}}(y_i,u^1_\lambda(y_i),\cdots,u^j(y_i),\cdots,u^m(y_i),0,U^j_\lambda(y_i))U^j_\lambda(y_i),
	\stepcounter{equation}\tag{\theequation}\label{eq10}
	\end{align*}
	by \cref{rmk:dijinf}, definition of $J$ and $U^j_\lambda(y_i)\leq U^j_\lambda(y_j), \forall j\in I$, \eqref{eq10} becomes
	\begin{align*}
	0\geq\sum_{j\in I} \widetilde{d_{ij}}(y_i,u^1_\lambda,\cdots,u^j,\cdots,u^m,0,U^j_\lambda(y_i))U^j_\lambda(y_j).
	\stepcounter{equation}\tag{\theequation}\label{eq11}
	\end{align*}
	
	We will prove next that $U^j_\lambda(y_j)\leq0$, $\forall j\in I$, which will be a contradiction to the choice of $y_i$. Rewrite \eqref{eq11} as
	\begin{align*}
	MU=V,
	\stepcounter{equation}\tag{\theequation}\label{eq12}
	\end{align*}
	where $U:=\left(U^1_\lambda(y_1),\dots,U^l_\lambda(y_l)\right)$, $V:=(v_1,\dots,v_l)$, $M:=(m_{ij})_{i,j=1}^l$, satisfy $$v_i\leq0, m_{ij}:=\widetilde{d_{ij}}(y_i,u^1_\lambda,\cdots,u^j,\cdots,u^m,0,U^j_\lambda(y_i))\leq0,\ i,j=1,\dots,l.$$ 
	Noticing that $\Sigma_\Lambda^{\widetilde{R}^c}\subset\br^n\setminus B_{\widetilde{R}}(0)$, by \cref{lemma:inf2} and \cref{lemmaA}, $M$ is invertable, hence by \eqref{eq12}we can solve $U=M^{-1}V$, by Cramer's law and \cref{lemmaA}, together with \cref{lemma:inf2}, we have
	$$U^j_\lambda(y_j)=M^{jk}v_k=\frac1{\det M}\sum_{k=1}^l\operatorname{adj}(M)_{jk}v_k\leq0,\ \forall j=1,\dots,l,$$ which is contradictory to the choice of $y_i$. Hence exists $\lambda''<0$ such that $\left.U^i_\mu\right|_{\Sigma_\mu^{\widetilde{R}^c}}\leq0, \forall\ i=1,\dots,m, \forall\ \mu<\lambda,$ 
		
	Take $\lambda:=\min\{\lambda',\lambda''\}$ we finish \textbf{Step 1}.\\
	
	Next we continue to move $T_\lambda$ by increasing $\lambda$, as long as $U^i_\lambda(x)\leq 0$ holds for all $i$ on $\Sigma_{\Lambda}$. We define
	\begin{equation*}
	\Lambda:=\sup\mleft\{\lambda\in\br\ \middle|\ U^i_\mu(x)\leq 0,\ \forall x\in\Sigma_\mu,\ \forall\ i=1,\dots,m,\ \forall\ \mu<\lambda\mright\}.
	\end{equation*}
	Since all the terms are continuous with respect to $\lambda$, we have 
	\begin{equation}\label{eq:cylinderinfstep1.5}
	U^i_\Lambda(x)\leq 0,\ \forall x\in\Sigma_\Lambda,\ \forall\ i=1,\dots,m.
	\end{equation}
	In particular, $U^i_\mu\leq 0, \forall \mu\leq\Lambda$, hence by covering argument, $\forall x\in\Sigma_\Lambda$, we have $\frac{\partial u^i}{\partial x_1}(x)\leq0$,
	hence by \eqref{eq:EIC} and \cref{rmk:dijinf}, on $\Sigma_\Lambda$,
	\begin{equation}\label{eq:EICF}
	\operatorname{tr}\left({\bf A^i}(x)D^2U^i_\Lambda(x)\right)+{\bf B^i}(x)\cdot\nabla U^i_\Lambda(x)-\widetilde{d^{ii}}(x)U^i_\Lambda(x)\geq0,\ \forall\ i=1,\dots,m.
	\end{equation}
	
	Since all the coefficient of \eqref{eq:EICF} are locally bounded, by \cref{lem:SMP-HL}, we have $\forall i=1,\dots,m$, either
	\begin{align*}
	&U^i_\Lambda(x)<0, \hspace{0.1cm}\forall x\in \Sigma_\Lambda,\\
	\text{or}\hspace{0.1cm}&U^i_\Lambda(x)\equiv 0, \hspace{0.1cm}\forall x\in \Sigma_\Lambda.
	\end{align*}
	If the former one happens, we also have
	\begin{align*}
	\frac{\partial U^i_\Lambda}{\partial x_1}(x)>0, \hspace{0.1cm}\forall x\in T_\Lambda.
	\end{align*}
	
	\textbf{Step 2}: We need to prove $\Lambda=\Lambda_0$. 
	
	By definition of $\Lambda$, there exists a sequence of $\{\lambda_k\}_{k=1}^\infty\subset\br$ and a sequence of $\{y_k\}_{k=1}^\infty\subset\br^n$, such that $\lambda_k\in(\Lambda, 0)$, satisfying $\lim\limits_{k\to\infty}\lambda_k=\Lambda$, and $y_k\in\Sigma_{\lambda_k}$, such that at least for one $i$, $U^i_{\lambda_k}(y_k)>0$. By \cref{lemma:cylinderinf1}, we can choose $\{y_k\}\subset\overline{\Sigma_{\lambda_k}}\setminus\Sigma_{\Lambda}$, such that 
	\begin{equation}\label{cstep2 max}
	U^i_{\lambda_k}(y_k)=\max_{y\in\overline{\Sigma_{\lambda_k}}}U^i_{\lambda_k}(y)>0, \hspace{0.1cm}k=1,2,\cdots
	\end{equation}
There two cases will happen. 

\begin{enumerate}
	\renewcommand{\theenumi}{\textbf{Case \arabic{enumi}}}
	\renewcommand{\labelenumi}{\theenumi}
	\item the sequence $\{y_k\}_{k=1}^\infty$ exists bounded subsequence, that is $x_k\to x^*\in\overline{\Sigma_\Lambda}$.
	
	By continuity of $u^i$, $U^i_{\Lambda}(x^*)\geq 0$, Hence by $\left.U^i_\Lambda\right|_{\Sigma_\Lambda}<0$ we have $x^*\in T_\Lambda$. By mean value theorem, $$0\leq U^i_{\Lambda}(x_k)=2\frac{\partial u^i_\Lambda}{\partial x_1}(\xi_k)(\lambda_k-x_{k,1}),$$ where $\xi_k$ lies in the segment connecting $x_k$ and $x_k^{\lambda_k}$, which shows $\frac{\partial u^i_\Lambda}{\partial x_1}(\xi_k)\geq0$, hence $$\frac{\partial u^i_\Lambda}{\partial x_1}(x^*)\geq0,$$ By continuity of $u^i$, this is contradictory to $\left.\frac{\partial u^i}{\partial x_1}\right|_{T_\Lambda}=\left.-\frac12\frac{\partial U^i_\Lambda}{\partial x_1}\right|_{T_\Lambda}<0$. 
	
	\item  $\lim_{k\to\infty}|y_k|=+\infty$.  
	In this case, we can choose $k^*>0$ such that $\forall k>k^*$, $|y_k|>\widetilde{R}$, where $\widetilde{R}$ as in \cref{lemma:inf2}. Then for each $y_k, k>k^*$, the argument similar to the second part of \textbf{Step 1} will give a contradiction.
  \end{enumerate}
	
	\textbf{Step 3}: Now $\Lambda=\Lambda_0$, hence by continuity of $u^i$, $U^i_\Lambda\leq 0$, and $\forall\lambda<\Lambda$, by using \cref{lemma:SMPFinf} on $\Sigma_\lambda$, and by covering argument shows that $\frac{\partial u^i}{\partial x_1}<0$ on $\mleft\{x\in C_\infty\ \middle|\ x_1<\Lambda_0\mright\}$.
	
	For the second part assertion in the theorem, noting that \eqref{eq:EIC} also holds on $\Sigma_\Lambda$, hence if $\frac{\partial u^i}{\partial x_1}(\Lambda_0,x')=0$ for some $x\in \{x_1=\Lambda_0\}$, then by \cref{lemma:SMPFinf} again, $U^i_\Lambda\equiv0$ on $\Sigma_\Lambda$.
\end{prf6}

\vskip 0.2in
\section{Acknowledgements}

\bibliographystyle{abbrv}
\bibliography{ref}

\end{document}